\documentclass[english]{amsart}
\usepackage{amsfonts,amssymb,amsmath,amsgen,amsthm}
\usepackage{hyperref,color}
\usepackage{pdfsync}
\usepackage{mathtools}

\makeatletter
\theoremstyle{plain}
\newtheorem{thm}{\protect\theoremname}
\theoremstyle{definition}
\newtheorem{defn}[thm]{\protect\definitionname}
\theoremstyle{remark}
\newtheorem{rem}[thm]{\protect\remarkname}
\theoremstyle{plain}
\newtheorem{lem}[thm]{\protect\lemmaname}
\theoremstyle{plain}
\newtheorem{prop}[thm]{\protect\propositionname}
\newtheorem{corollary}{Corollary}

\makeatother

\usepackage{babel}
\providecommand{\definitionname}{Definition}
\providecommand{\lemmaname}{Lemma}
\providecommand{\propositionname}{Proposition}
\providecommand{\remarkname}{Remark}
\providecommand{\theoremname}{Theorem}

\def\R{{\mathbf R}}
\def\N{{\mathbf N}}
\def\Z{{\mathbf Z}}
\def\d{{\partial}}
\def\eps{\varepsilon}
\DeclareMathOperator{\RE}{Re}
\DeclareMathOperator{\IM}{Im}
\DeclareMathOperator{\diver}{div}

\numberwithin{equation}{section}

\date\today

\title[Instrinsically hydrodynamic approach to multidimensional QHD]{An intrinsically hydrodynamic approach to multidimensional QHD systems}

\author{Paolo Antonelli}
\address{Gran Sasso Science Institute, viale Francesco Crispi, 7, 67100 L'Aquila}
\email{paolo.antonelli@gssi.it}

\author{Pierangelo Marcati}
\address{Gran Sasso Science Institute, viale Francesco Crispi, 7, 67100 L'Aquila}
\email{pierangelo.marcati@gssi.it}

\author{Hao Zheng}
\address{Chinese Academy of Science, Academy of Mathematics and System Science, Zhongguancun East Rd. 55, 100190, Beijing}
\email{zhenghao@amss.ac.cn}

\subjclass{Primary: 35Q40; Secondary: 76Y05, 35Q35, 82D50.}
\keywords{Quantum Hydrodynamics, global existence, stability, finite energy, chemical potential}

\begin{document}
	\maketitle
	\begin{abstract}
		In this paper we consider the multi-dimensional Quantum Hydrodynamics (QHD) system, by adopting an intrinsically hydrodynamic approach. 
		The present paper continues the analysis initiated in \cite{AMZ} where the one dimensional case has been investigated. Here we extend the analysis to the multi-dimensional problem, in particular by considering two physically relevant classes of solutions. First of all we consider two-dimensional initial data endowed with point vortices; by assuming the continuity of the mass density and a quantization rule for the vorticity, we are able to study the Cauchy problem and provide global finite energy weak solutions. The same result can be obtained also by considering spherically symmetric initial data in the multi-dimensional setting.
		For rough solutions with finite energy, we are able to provide suitable dispersive estimates, which also apply to a more general class of Euler-Korteweg equations.
		
		Moreover we are also able to show the sequential stability of weak solutions with positive density. Analogously to the one dimensional case, this is achieved through the a priori bounds given by a new functional, first introduced in \cite{AMZ}.
		
	\end{abstract}

	\section{Introduction}\label{sect:intro}
	The quantum hydrodynamic (QHD) system is used as a model in various contexts in physics and engineering, such as superfluidity \cite{L}, Bose-Einstein condensation \cite{DGPS}, quantum plasmas \cite{HaasB}, or semiconductor devices \cite{Gar}. In general such system describes phenomena whose macroscopic characterization still exhibit quantum features.
	From the mathematical point of view it is an Euler type system for compressible fluids,  where the current density balance equation includes a third order term provided by a quantum potential encoding fluid macroscopic properties due to quantum interactions.
	The QHD system is given by
	\begin{equation}\label{eq:QHD_md}
		\left\{\begin{aligned}
			&\d_t\rho+\diver J=0\\
			&\d_tJ+\diver\left(\frac{J\otimes J}{\rho}\right)+\nabla p(\rho)=
			\frac12\rho\nabla\left(\frac{\Delta\sqrt{\rho}}{\sqrt{\rho}}\right),
		\end{aligned}\right.
	\end{equation}
	with $(t, x)\in\R\times\R^d$, where the unknowns $\rho$ and $J$ describe the fluid particle and momentum densities, respectively. The pressure term $p(\rho)$ for convenience will be assumed to satisfy a power law, namely $p(\rho)=\frac{\gamma-1}{\gamma}\rho^\gamma$, with $1<\gamma<\infty$ for $d=2$ and $1<\gamma<3$ for $d=3$.
	More general pressure laws can be taken into consideration and for instance it is also possible to treat non-monotone cases, see Remark \ref{rmk:vdW} for more details.
	\newline
	The total energy of the system, formally conserved along the flow of solutions, is given by
	\begin{equation}\label{eq:en_QHD_md}
		E(t)=\int_{\R^d}\frac{1}{2}|\nabla\sqrt{\rho}|^2+\frac12\frac{|J|^2}{\rho}+f(\rho)\,dx,
	\end{equation}
	where the internal energy $f(\rho)$ is related to the pressure term by the relation 
	$f(\rho)=\rho\int_0^\rho\frac{p(s)}{s^2}\,ds$.
	\newline
	The natural bounds given by \eqref{eq:en_QHD_md} imply that the only available control for the velocity field $v$, defined by $J=\rho v$, is in $L^2$, with respect to the measure $\rho\,dx$. In particular, there is no control of the velocity field in the vacuum region. For this reason in order to deal with finite energy weak solutions to \eqref{eq:QHD_md}, it is more convenient to consider the unknowns 
	$(\sqrt{\rho}, \Lambda)$, which define the hydrodynamic variables by $\rho=(\sqrt{\rho})^2$, $J=\sqrt{\rho}\Lambda$, see Definition \ref{def:FEWS} below for more details. In this paper we will refer to $(\rho,J)$ as the macroscopic hydrodynamic variables solving the system \eqref{eq:QHD_md}, and the pair $(\sqrt\rho,\Lambda)$ will often be referred as the hydrodynamic state associated to $(\rho, J)$. 
	\\
	The lack of suitable a priori bounds prevents the study of solutions to \eqref{eq:QHD_md} by using compactness arguments like it is done for viscous systems, see for example \cite{AS1, AS2, LV, LZZ} where a viscous counterpart of system \eqref{eq:QHD_md} is considered, see also \cite{AS3, AS4} where a similar system is studied by using a suitable truncation argument. On the contrary, for the QHD system most of the existing results in the literature are of perturbative type \cite{JMR, LM, HLM, HLMO, AMZ_q}.
	\\
	On the other hand, the QHD system \eqref{eq:QHD_md} enjoys a useful relation with nonlinear Schr\"odinger equations, established through the Madelung transformations \cite{Mad}. More specifically, given a wave function it is possible to define suitable hydrodynamical variables, in such a way that a solution to the NLS equation determines a solution to the QHD system by means of the Madelung transform. This relation was rigorously set up and exploited in \cite{AM1, AM2} in order to prove the global existence of finite energy weak solutions to \eqref{eq:QHD_md} without any smallness or regularity assumptions. In particular, in \cite{AM1, AM2} the authors developed a polar factorization technique which overcome the difficulty of defining the velocity field in the vacuum region and allows to define the state $(\sqrt{\rho}, \Lambda)$ almost everywhere. The main drawback of this approach is that the initial data for \eqref{eq:QHD_md} need to be consistent with some given wave function, in order to define an initial datum for the wave function dynamics.
	\newline
	In \cite{AMZ} we overcome this problem in the one dimensional case by developing a wave function lifting method, namely for a large class of hydrodynamic states $(\sqrt{\rho}, \Lambda)$, it is possible to determine an associated wave function. An alternative approach was proposed in \cite{Bian}, where the problem is reduced to exact integrability conditions for cotangent vector fields. By assuming a quantization condition for the vorticity, it is solved in metric measure spaces satisfying some specific assumptions.
	By using the wave function lifting it is then possible to show the existence of global in time finite energy weak solutions to the one dimensional QHD system, without the assumption that the initial data are generated by a wave function. Moreover, by introducing a new functional which, at least formally, is bounded over compact time intervals, it is also possible to determine a class of solutions enjoying a stability property. More precisely, by assuming the uniform bounds given by the total energy and the new functional, it is possible to prove that any sequence of solutions enjoying the previous bounds admits  a converging subsequence and the limit is a finite energy weak solution to the one dimensional QHD system. The stability result given in \cite{AMZ} holds for a general class of solutions satisfying the aforementioned uniform bounds, and are not necessarily restricted to solutions generated by NLS equations.
	\newline
	The purpose of the present paper is to continue the analysis initiated in \cite{AMZ}, in order to develop an intrinsically hydrodynamic theory to QHD systems in multidimensional cases. 
	It is straightforward to see that considering the problem for $d\geq2$ entails major mathematical difficulties.
	If we want to follow a similar strategy as in \cite{AMZ}, one of the main obstacles is the characterization of the vacuum region $\{\rho=0\}$. Indeed, while in one space dimension the energy estimate and the Sobolev embedding yield the mass density to be continuous, which consequently implies that $\{\rho=0\}$ is a closed set, in the multidimensional case we loose this topological information and in general it is not possible to characterize the structure of the vacuum region with the same methods of \cite{AMZ}. 
	Moreover, for $d\geq2$ an extra structural property of solutions needs to be taken into account, namely the generalized irrotationality condition introduced in \cite{AM1, AM2}, see also Definition \ref{def:FEWS} and the following Remark \ref{rmk:vort} below. Heuristically, the generalized irrotationality condition imposes to consider solutions such that the velocity field is irrotational in the region $\{\rho>0\}$, whereas in the vacuum region 
	$\{\rho=0\}$ vortices may appear. This condition is dictated by physical observations and numerical simulations for Bose-Einstein condensates and superfluid Helium II. Moreover, theoretical predictions 
	\cite{Ons, Fey} say that the vorticity of a quantum fluid must be quantized, namely the circulation of the velocity field on a closed curve around a vortex must be an integer multiple of the vorticity quantum. 
	This is consistent with the formal analogy given by the Madelung transform. Indeed given a wave function $\psi$, by expressing it in terms of its amplitude and phase $\psi=\sqrt{\rho}e^{iS}$, it follows that the velocity field $v$ is given by the gradient of the phase, $v=\nabla S$. In this way the velocity field is straightforwardly irrotational, however the formal description given by the Madelung transform fails in the vacuum region, where the phase may not be defined. In \cite{AM1, AM2} the authors overcome the formal analogy given by the Madelung transformation by means of the polar decomposition approach. In this way it is possible to consider hydrodynamic states $(\sqrt{\rho}, \Lambda)$ which are defined almost everywhere and satisfy a generalized irrotationality condition. 
	\\
	In the present paper we are able to consider a class of initial data including a countable number of non-accumulating quantized vortices and such that the mass density is continuous. In this framework we are able to build up a suitable wave function lifting method (see Section \ref{sect:lift_2d}), which then yields the existence of global in time finite energy weak solutions to \eqref{eq:QHD_md}. This result also allows to deal in a similar way with a countable number of vortex lines on a three dimensional framework. Moreover we are also able to study another physically relevant case, namely by assuming radial symmetry of the initial data. Also in this case it is possible to establish a wave function lifting result for hydrodynamical functions and hence to infer global existence of finite energy weak solutions to \eqref{eq:QHD_md}.
	\\
	Moreover, in the present paper we also provide a class of purely hydrodynamical dispersive estimates for the QHD system; actually some of them can also be stated for general Euler-Korteweg systems, in particular in Proposition \ref{prop:mor_md} we prove a Morawetz-type estimate.
	\\
	Analogously to the one dimensional case studied in \cite{AMZ}, in this paper we also show a stability result for weak solutions to \eqref{eq:QHD_md}, however the full generality of those results cannot be achieved and we focus our analysis to the case where we assume the positivity for the whole sequence of the mass densities. 
	
	In order to establish a stability theory for finite energy weak solutions to fluid systems, it is necessary to derive suitable a priori bounds, often provided by diffusive or dissipative properties.
	It is well known that conservative systems, like \eqref{eq:QHD_md}, do not enjoy such structural stability property.
	In particular this is in sharp contrast with viscous systems with constant viscosity (see for instance \cite{Fei, Lions}), where the energy dissipation provides a control in Sobolev norms for the velocity field. Also in the case of degenerate viscosity \cite{LLX, LX}, where the energy dissipation does not yield sufficient bounds due to the degeneracy in the vacuum region, further a priori estimates are still available \cite{BrD, BDL}. This fact allows to infer a compactness property for weak solutions, see for instance \cite{AS1} for the QNS equations, namely system \eqref{eq:QHD_md} augmented by a degenerate viscosity, or \cite{AS3} for the analysis of a particular Navier-Stokes-Korteweg system. Unfortunately, none of the above methods can be used for QHD or Euler-Korteweg type systems, therefore we will have to extract the information from other physical properties. For instance, we will take into consideration the properties of the chemical potential.
	
	As we already mentioned before, one of the main novelties of our analysis relies on the introduction of a new functional which provides suitable a priori bounds for solutions to \eqref{eq:QHD_md}.
	Formally, this higher order functional gives a control in $L^2(\rho\,dx)$ of the chemical potential associated to system \eqref{eq:QHD_md}, which is defined by
	\begin{equation}\label{eq:chem_md}
		\mu=\frac{\delta E}{\delta\rho}=-\frac12\frac{\Delta\sqrt{\rho}}{\sqrt{\rho}}+\frac12|v|^2+f'(\rho),
	\end{equation}
	see for example \cite{GLT}. Unfortunately the chemical potential $\mu$ cannot be used to carry out a satisfactory mathematical analysis in the framework of weak solutions to \eqref{eq:QHD_md}. For this reason it will be more convenient to consider
	\begin{equation}\label{eq:lambda_md_intro}
		\xi:=\rho\mu=-\frac14\Delta\rho+e+p(\rho),
	\end{equation}
	where 
	\begin{equation}\label{eq:endens_md}
		e=\frac12|\nabla\sqrt{\rho}|^2+\frac12|\Lambda|^2+f(\rho)
	\end{equation}
	is the energy density. In this way we define
	\begin{equation}\label{eq:higher_md}
		I(t)=\int\frac{\xi^2}{\rho}+(\d_t\sqrt{\rho})^2\,dx.
	\end{equation}
	More precisely, by introducing a "generalized chemical potential"
	\begin{equation}\label{eq:lambda_intro}
		\lambda=\frac{\xi}{\sqrt{\rho}}\chi_{\{\rho>0\}},
	\end{equation}
	we can write $I(t)=\int\lambda^2+(\d_t\sqrt{\rho})^2\,dx$.
	A more detailed discussion about $\lambda$ and $I(t)$ will be given later, see also Section 7 in \cite{AMZ}.
	\\
	We can now present the main results of our paper.
	\subsection{Global existence of weak solutions for initial data with point vortices}
	
	The first result we show deals with the existence of global in time finite energy weak solutions.
	As mentioned before we are going to exploit a wave function lifting argument in the two dimensional case by considering initial data with point vortices. More precisely we consider initial data $(\rho_0(x),J_0(x))$ with $x\in\R^2$ such that $\rho_0$ is continuous, and the vacuum set $V=\{x:\,\rho_0(x)=0\}$ consists of isolated points, 
	\begin{equation}\label{eq:vac_intro}
		V=\{x_{(\alpha)}\}_{\alpha\in\mathcal{A}}\subset\R^2,
	\end{equation}
	where $\mathcal{A}$ is an at most countable set of indices, and in the case that $\mathcal{A}$ is an infinite set, we further require $\underset{\alpha\ne\beta}{\inf} |x_{(\alpha)}-x_{(\beta)}|>0$. On the other hand, let $v_0=J_0/\rho_0$ be the initial velocity field, which is well-defined almost everywhere, then we say $(\rho_0,J_0)$ satisfies the \emph{quantised vorticity condition} if 
	\begin{equation}\label{eq:QV_intro}
		\begin{cases}
			v_0\in \mathcal M(\R^2)\\
			\nabla\wedge v_0=2\pi \underset{\alpha\in\mathcal{A}}{\sum}k_\alpha\delta_{x_{(\alpha)}}, \hspace{0.3cm} k_\alpha\in\Z,
		\end{cases}
	\end{equation}
	where $\mathcal M(\R^2)$ is the space of Radon measures and $\delta_{x_{(\alpha)}}$ is the Dirac-delta distribution supported at $x_{(\alpha)}$ for $\alpha\in\mathcal{A}$. 
	
	The condition \eqref{eq:QV_intro} is dictated by physical motivations. Indeed as we discussed before, one of the main features of quantum fluids is the appearance of quantized vortices \cite{Fey, Wiem, Tsub}. More precisely, quantum fluids are characterized by the fact that the flow is irrotational in the region $\{\rho>0\}$ and on the vacuum region quantized vortices may appear. This means that if $x_0\in \R^2$ is a vacuum point, then the circulation of the velocity field along a closed curve around $x_0$ must be quantized,
	\begin{equation}\label{eq:108}
		\oint_{\gamma} v(x)\cdot d\overrightarrow{l}(x)=2\pi k
	\end{equation}
	for some $k\in\mathbb{Z}$. By continuity, the integer $k$ is uniquely
	determined by the homotopy class of curve $\gamma$. Formally by Stokes' theorem, the quantised vorticity condition \eqref{eq:QV_intro} is the distributional formulation of identity \eqref{eq:108}. Notice that the latter identity is the well known Bohr-Sommerfeld quantization rule used in the old quantum theory, see \cite[Paragraph 48]{LL3}.
	
	\begin{thm}[Global Existence of finite energy weak solutions]\label{thm:glob_md}
		Let us consider a hydrodynamic state $(\sqrt{\rho_0}, \Lambda_0)$ satisfying the bound
		\begin{equation}\label{eq:C1_md_intro}
			\|\sqrt{\rho_0}\|_{H^1(\R^2)}+\|\Lambda_0\|_{L^2(\R^2)}\leq M_1,
		\end{equation}
		for some $M_1>0$ 
		and let us further assume that $\rho_0$ is continuous, with isolated vacuum points, namely there exists an at most countable index set $\mathcal{A}$, such that 
		\begin{equation}\label{eq:vac_thm}
			V=\{\rho_0=0\}=\{x_{(\alpha)}\}_{\alpha\in\mathcal{A}}\subset\R^2,\quad \inf_{\alpha\ne \beta}|x_{(\alpha)}-x_{(\beta)}|\geq \alpha>0
		\end{equation}
		and $v_0=J_0/\rho_0$ satisfies the quantized vorticity condition 
		\begin{equation}\label{eq:QV_thm}
			\begin{cases}
				v_0\in \mathcal M(\R^2)\\
				\nabla\wedge v_0=2\pi \underset{\alpha\in\mathcal{A}}{\sum}k_j\delta_{x_{(\alpha)}}, \hspace{0.3cm} k_\alpha\in\Z.
			\end{cases}
		\end{equation}
		Then there exists a global in time finite energy weak solution to the Cauchy problem \eqref{eq:QHD_md} which conserves the total energy for all time, namely $E(t)=E(0)$. In particular we have
		\begin{equation}
			\|\sqrt{\rho}\|_{L^\infty(0, T;H^1(\R^2))}+\|\Lambda\|_{L^\infty(0, T;L^2(\R^2))}\leq C(M_1),
		\end{equation}
		for some $C(M_1)>0$.
	\end{thm}
	\begin{rem}
		As we will see in Proposition \ref{prop:lift_2d} below, the assumptions on the initial data in Theorem \ref{thm:glob_md} implies the existence of a finite energy wave function $\psi_0\in H^1$, associated to the hydrodynamic state $(\sqrt{\rho_0}, \Lambda_0)$. By evolving it through a suitable nonlinear Schr\"odinger equation and by exploiting the polar factorization, see Lemma \ref{lemma:polar} below, we are able then to show the existence of a finite energy weak solution, as stated in Theorem \ref{thm:glob_md} above.
		\newline
		Moreover, let us remark that, as a byproduct of our wave function lifting, we also obtain that the quantized vorticity condition \eqref{eq:QV_thm} implies the generalized irrotationality condition. We provide more details about this fact in Remark \ref{rmk:qv_gic} below.
	\end{rem}
	
	Furthermore, we are able to construct a family of more regular weak solutions. As we will see, the result we establish is not given in terms of the Sobolev regularity for $(\sqrt{\rho}, \Lambda)$ but rather in terms of the functions involved in the analysis of the relavant functional \eqref{eq:higher_md}.
	
	\begin{thm}[Global existence of regular weak solutions]\label{thm:glob2_md}
		Let us consider a finite energy hydrodynamic state $(\sqrt{\rho_0}, \Lambda_0)$ satisfying the same hypotheses as Theorem \ref{thm:glob_md}. Let us further assume that 
		\begin{align}
			&\Delta\rho_0, \nabla J_0\in L^1_{loc}(\R^2),\label{eq:1.15}\\
			\label{eq:C2_md_intro}
			&\|\frac{\Lambda_{0,j}^2}{\sqrt{\rho_0}}-\d_{x_j}^2\sqrt{\rho_0}\|_{L^2(\R^2)}
			+\|\frac{\d_{x_j} J_{0,j}}{\sqrt{\rho_0}}\|_{L^2(\R^2)}\leq M_2,\quad j=1,2.
		\end{align}
		Then the solution to \eqref{eq:QHD_md}, constructed in Theorem \ref{thm:glob_md}, enjoys the following properties: 
		\begin{itemize}
			\item
			there exists $\lambda\in L^\infty(0, T; L^2(\R^2))$, such that $\lambda=0$ a.e. on $\{\rho=0\}$ and
			\begin{equation}\label{eq:rho_mu_md}
				\sqrt{\rho}\lambda=-\frac14\triangle\rho+e+p(\rho),
			\end{equation}
			where $e$ denotes the total energy density
			\begin{equation*}
				e=\frac12|\nabla\sqrt{\rho}|^2+\frac12|\Lambda|^2+f(\rho);
			\end{equation*}
			\item for any $0<T<\infty$ we have
			\begin{equation}\label{eq:lambda_md}
				\|\d_t\sqrt{\rho}\|_{L^\infty(0, T;L^2(\R^2))}+\|\lambda\|_{L^\infty(0, T;L^2(\R^2))}\leq C(T, M_1, M_2).
			\end{equation}
		\end{itemize}
		Furthermore, for these solutions the following bounds hold true
		\begin{equation*}
			\|\rho\|_{L^\infty(0, T;H^2(\R^2))}+\|J\|_{L^\infty(0, T;H^1(\R^2))}+\|\sqrt{e}\|_{L^\infty(0, T;H^1(\R^2))}\leq C(T, M_1, M_2).
		\end{equation*}
	\end{thm}
	\vspace{0.1cm}
	\begin{rem}\label{rmk:C2_md}
		Since in Theorem \ref{thm:glob2_md} we assume the initial data $(\rho_0,J_0)$ have point vacuum and satisfy \eqref{eq:1.15}, the functions in condition \eqref{eq:C2_md_intro} are well-defined almost everywhere on $\R^2$ for $j=1,2$. The condition \eqref{eq:C2_md_intro} plays an essential role in characterising the higher order regularity of the lifted wave function given by Proposition \ref{prop:lift_2d}. We also remark that these conditions are sufficient and necessary to construct an initial wave function associated to hydrodynamic data with pointwise vacuum, and the bounds $\eqref{eq:C2_md_intro}$ are preserved by the solutions we obtained in Theorem \ref{thm:glob2_md}.
	\end{rem}
	\begin{rem}
		We remark that for the solutions constructed in Theorem \ref{thm:glob2_md}, the higher order functional in \eqref{eq:higher_md} is well defined for almost every time, as the quantity $\lambda$ constructed in the Theorem above matches the definition in \eqref{eq:lambda_intro}. Moreover, from \eqref{eq:lambda_md} we also infer that $I(t)$ is uniformly bounded over compact time intervals. 
		\newline
		As we will see later, the same holds true also for the solutions constructed in Theorems \ref{thm:glob2_3d} and \ref{thm:glob2_s}.
	\end{rem}
	
	The argument of Theorem \ref{thm:glob_md} and Theorem \ref{thm:glob2_md} also applies to typical 3D initial data. For example we can consider the 3D initial hydrodynamic state $(\sqrt{\rho_0}(x),\Lambda_0(x))$, $x=(x_1,x_2,x_3)\in\R^3$, with planar symmetry given as follows. By considering a 2D finite energy hydrodynamic state $(\sqrt{\rho_1}, \Lambda_1)$ and $\sqrt{\rho_2}\in H^1(\R)$, with $\sqrt{\rho_2}\geq0$, we define
	\begin{equation}\label{eq:cyldata_intro}
		\sqrt{\rho_0}(x)=\sqrt{\rho_1}(x_1,x_2)\sqrt{\rho_2}(x_3)\quad and \quad \Lambda_0(x)=\left(\Lambda_1(x_1,x_2)\sqrt{\rho_2}(x_3),0\right).
	\end{equation}
	We assume $(\rho_1,J_1)$ has point vortices, namely $\rho_1$ is continuous with vacuum structure \eqref{eq:vac_intro}, and the velocity field $v_1=J_1/\rho_1$ satisfies the quantised vorticity condition \eqref{eq:QV_intro}. In this case the vortex filaments of $(\rho_0(x),J_0(x))$ appear as parallel straight lines in $x_3$ direction, which is a widely studied vortex structure of quantum flows in 3D space \cite{BBS}. For such initial data, we can still prove the global existence of weak solutions, both with finite energy and in the regular class of the functional $I(t)$.
	
	\begin{thm}\label{thm:glob_3d}
		Let $(\sqrt{\rho_0}, \Lambda_0)$ be an initial hydrodynamic state given as in \eqref{eq:cyldata_intro}, where $(\sqrt{\rho_1},\Lambda_1)$ satisfies the vorticity structure \eqref{eq:vac_intro}, \eqref{eq:QV_intro} and the bound 
		\begin{equation*}
			\|\sqrt{\rho_1}\|_{H^1(\R^2)}+\|\Lambda_1\|_{L^2(\R^2)}\leq M_1,
		\end{equation*}
		for some $M_1>0$. Moreover we assume that $\sqrt{\rho_2}\in H^1(\R)$. Then there exists a global in time finite energy weak solution to the Cauchy problem \eqref{eq:QHD_md}, which conserves the total energy for all times, namely $E(t)=E(0)$. In particular we have
		\begin{equation}
			\|\sqrt{\rho}\|_{L^\infty(0, T;H^1(\R^3))}+\|\Lambda\|_{L^\infty(0, T;L^2(\R^3))}\leq C(M_1),
		\end{equation}
		for some $C(M_1)>0$.
	\end{thm}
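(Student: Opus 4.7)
The plan is to reduce the three-dimensional lifting to the two-dimensional one already established in Theorem~\ref{thm:glob_md}, by exploiting the planar factorization \eqref{eq:cyldata_intro}, and then to invoke the polar decomposition machinery of \cite{AM1, AM2} on a global $H^1$ solution of the three-dimensional NLS equation. Since $(\rho_1, J_1)$ satisfies exactly the hypotheses of Theorem~\ref{thm:glob_md} on $\R^2$, I would first apply the wave function lifting proven there to produce an initial wave function $\psi_1^0 \in H^1(\R^2)$ with $|\psi_1^0|^2 = \rho_1$, $\IM(\overline{\psi_1^0}\nabla_{x_1,x_2}\psi_1^0) = J_1$ almost everywhere, and the generalized irrotationality condition in $\R^2$.

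\textbf{Building the 3D initial wave function.} Because the $x_3$-component of $J_0$ vanishes identically, no phase is required in that variable, and I would simply set $\psi_2(x_3) := \sqrt{\rho_2(x_3)} \in H^1(\R)$. Form the tensor product $\psi_0(x) := \psi_1^0(x_1, x_2)\,\psi_2(x_3)$; by Fubini one has $\psi_0 \in H^1(\R^3)$ with
\[
\|\nabla\psi_0\|_{L^2(\R^3)}^2 = \|\nabla_{x_1,x_2}\psi_1^0\|_{L^2(\R^2)}^2\|\psi_2\|_{L^2(\R)}^2 + \|\psi_1^0\|_{L^2(\R^2)}^2\|\partial_{x_3}\psi_2\|_{L^2(\R)}^2,
\]
and since $\psi_2$ is real a direct computation gives $|\psi_0|^2 = \rho_0$ and $\IM(\overline{\psi_0}\nabla\psi_0) = (\rho_2 J_1,0) = J_0$, so the hydrodynamic data are recovered, and the generalized irrotationality of $\psi_0$ in $\R^3$ is inherited from that of $\psi_1^0$ together with $\IM(\psi_2\partial_{x_3}\psi_2)\equiv 0$.

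\textbf{NLS evolution and recovery of the QHD solution.} The subcriticality assumption $1 < \gamma < 3$ in dimension three makes the associated NLS
\[
i\partial_t\psi + \tfrac12\Delta\psi = f'(|\psi|^2)\psi
\]
defocusing and energy-subcritical, so standard well-posedness yields a unique global solution $\psi \in C(\R; H^1(\R^3))$ starting from $\psi_0$, with conserved mass and conserved Schr\"odinger energy. Applying the polar decomposition of \cite{AM1, AM2} then defines $(\sqrt{\rho},\Lambda)$ almost everywhere on $\R^3$ and produces a global finite energy weak solution of \eqref{eq:QHD_md}. The identification between NLS and QHD energies gives $E(t) = E(0)$, and the uniform bound $\|\sqrt{\rho}\|_{L^\infty(0,T;H^1)} + \|\Lambda\|_{L^\infty(0,T;L^2)} \leq M_1$ then follows from the conservation law together with the estimate for $\psi_0$ inherited from $\psi_1^0$.

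\textbf{Main obstacle.} The only non-routine point is verifying that the wave function $\psi_1^0$ furnished by Theorem~\ref{thm:glob_md}, when multiplied by the real 1D factor $\psi_2$, still encodes the correct vortex-line structure of $(\rho_0,J_0)$: one must check that the nodal set of $\psi_0$ is precisely $\bigcup_j (\{x_{(j)}\}\times\R) \,\cup\, (\R^2\times\{\psi_2 = 0\})$, and that around each vortex line $\{x_{(j)}\}\times\R$ the phase of $\psi_0$ still produces the quantized circulation $2\pi k_j$ of \eqref{eq:QV_intro} in the three-dimensional distributional sense. Once this compatibility is established (which amounts to slicing in $x_3$ and applying the 2D result on each slice where $\psi_2\neq 0$), the rest is an automatic application of the 3D NLS theory plus the polar factorization technique already developed in \cite{AM1, AM2}.
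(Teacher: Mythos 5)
Your proposal follows essentially the same route as the paper: lift $(\rho_1,J_1)$ by the two-dimensional wave function lifting (Proposition \ref{prop:lift_2d}), tensor with the real factor $\sqrt{\rho_2}\in H^1(\R)$ to get $\psi_0\in H^1(\R^3)$ reproducing $(\sqrt{\rho_0},\Lambda_0)$, and then run the 3D NLS well-posedness together with the polar factorization of \cite{AM1, AM2} to obtain the global finite energy weak solution with energy conservation (this is exactly Proposition \ref{prop:lift_3D} plus Theorems \ref{thm:NLS} and \ref{thm:QHD_old}). The "main obstacle" you flag is not actually needed for the existence statement, since Theorem \ref{thm:QHD_old} only requires $\psi_0$ to reproduce the initial hydrodynamic data, the generalized irrotationality condition being automatic for Schr\"odinger-generated solutions.
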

	
	\begin{thm}\label{thm:glob2_3d}
		Furthermore, let us assume 
		\begin{equation*}
			\|\frac{\Lambda_{1,j}^2}{\sqrt{\rho_1}}-\d_{x_j}^2\sqrt{\rho_1}\|_{L^2(\R^2)}
			+\|\frac{\d_{x_j} J_{1,j}}{\sqrt{\rho_1}}\|_{L^2(\R^2)}\leq M_2,\quad j=1,2,
		\end{equation*}
		and $\sqrt{\rho_2}\in H^2(\R)$. Then there exists $\lambda\in L^\infty(0, T; L^2(\R^3))$, such that $\lambda=0$ a.e. on $\{\rho=0\}$,
		\begin{equation*}
			\sqrt{\rho}\lambda=-\frac14\triangle\rho+e+\rho f'(\rho)
		\end{equation*}
		and for any $0<T<\infty$ we have
		\begin{equation*}
			\|\d_t\sqrt{\rho}\|_{L^\infty(0, T;L^2(\R^3))}^2+\|\lambda\|_{L^\infty(0, T;L^2(\R^3))}\leq C(T, M_1, M_2).
		\end{equation*}
		Furthermore, for such solutions the following bounds hold true
		\begin{equation*}
			\|\rho\|_{L^\infty(0, T;H^2(\R^3))}+\|J\|_{L^\infty(0, T;H^1(\R^3))}+\|\sqrt{e}\|_{L^\infty(0, T;H^1(\R^3))}\leq C(T, M_1, M_2).
		\end{equation*}
	\end{thm}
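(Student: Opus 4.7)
The plan is to reduce Theorem \ref{thm:glob2_3d} to a wave function lifting in three dimensions: I would construct an initial datum $\psi_0\in H^2(\R^3)$ of tensor product form $\psi_0(x)=\psi_1(x_1,x_2)\,\psi_2(x_3)$, evolve it by the 3D NLS associated to \eqref{eq:QHD_md}, and then translate the preserved $H^2$ bound into the hydrodynamic functional $I(t)$ via the polar identities used in the proof of Theorem \ref{thm:glob2_md}.

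For the construction I would take $\psi_1$ to be the 2D lifting of $(\sqrt{\rho_1},\Lambda_1)$ supplied by Proposition \ref{prop:lift_2d}, and $\psi_2=\sqrt{\rho_2}$ real-valued (consistent with the vanishing of the third component of $J_0$). The pointwise polar identity $|\d_j^2\psi_1|^2=(\d_j^2\sqrt{\rho_1}-\Lambda_{1,j}^2/\sqrt{\rho_1})^2+(\d_j J_{1,j}/\sqrt{\rho_1})^2$ for $j=1,2$, which is the core estimate in the proof of Theorem \ref{thm:glob2_md}, together with the hypothesis on $(\rho_1,J_1)$, yields $\psi_1\in H^2(\R^2)$. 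Combined with $\sqrt{\rho_2}\in H^2(\R)$ this gives $\psi_0\in H^2(\R^3)$ by decomposing the second derivatives of $\psi_0$ into the three groups (tangential $j=1,2$; vertical $j=3$; mixed) and using the factorization, with norm controlled by $C(M_1,M_2,\|\sqrt{\rho_2}\|_{H^2})$. A direct computation shows that $|\psi_0|^2=\rho_0$ and $\IM(\bar\psi_0\nabla\psi_0)=J_0$ as in \eqref{eq:cyldata_intro}, so $\psi_0$ indeed lifts the prescribed hydrodynamic data.

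Next I would evolve $\psi_0$ by the 3D NLS $i\d_t\psi=-\tfrac12\Delta\psi+f'(|\psi|^2)\psi$. Since $1<\gamma<3$, the nonlinearity is $H^2$-subcritical and defocusing, so the flow is globally well-posed in $H^2(\R^3)$ with a priori bounds on $\|\psi(t)\|_{H^2}$ depending only on $\|\psi_0\|_{H^2}$ and the time horizon $T$. The polar factorization of \cite{AM1,AM2} then produces from $\psi(t)$ a finite energy weak solution of \eqref{eq:QHD_md} which agrees with the one constructed in Theorem \ref{thm:glob_3d}. To obtain the bound on $I(t)$, I would invoke the 3D version of the polar identity summed over $j=1,2,3$, namely $\|\Delta\psi\|_{L^2(\R^3)}^2=\sum_{j=1}^{3}\|\d_j^2\sqrt{\rho}-\Lambda_j^2/\sqrt{\rho}\|_{L^2}^2+\sum_{j=1}^{3}\|\d_j J_j/\sqrt{\rho}\|_{L^2}^2$, which converts the $H^2$ control on $\psi$ into the desired control on $\|\lambda\|_{L^2}^2+\|\d_t\sqrt{\rho}\|_{L^2}^2$, exactly as in the 2D case. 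The Sobolev bounds on $\rho$, $J$ and $\sqrt{e}$ then follow by product and chain rules applied to the $H^2$-regular $\psi$.

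The main obstacle is that the tensor product structure of $\psi_0$, which makes the initial $H^2$ estimate transparent, is not preserved by the NLS flow because $f'(\rho_1\rho_2)\psi_1\psi_2$ does not decouple into a 2D and a 1D equation. This is resolved by observing that only the \emph{initial} bound needs to exploit the product decomposition; once $\psi_0\in H^2(\R^3)$ is secured, the $H^2$-subcritical NLS theory propagates the $H^2$ norm regardless of any further structural features, and the polar identities recover the hydrodynamic controls at every later time from the preserved $H^2$ norm of $\psi(t)$.
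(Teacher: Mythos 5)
Your proposal follows essentially the paper's own route: Proposition \ref{prop:lift_3D} constructs exactly the tensor-product lift $\psi_0=\psi_1\sqrt{\rho_2}\in H^2(\R^3)$ (with $\psi_1$ given by Proposition \ref{prop:lift_2d}), and the theorem is then deduced, as you describe, from global $H^2$ well-posedness of the associated 3D NLS, polar factorization, and the same computations as in the proof of Theorem \ref{thm:glob2_md}. One small correction: $\|\triangle\psi\|_{L^2(\R^3)}^2$ does not split as $\sum_{j}\|\d_{x_j}^2\sqrt{\rho}-\Lambda_j^2/\sqrt{\rho}\|_{L^2}^2+\sum_j\|\d_{x_j}J_j/\sqrt{\rho}\|_{L^2}^2$ (the cross terms between different $j$ do not cancel); the bounds on $\lambda$ and $\d_t\sqrt{\rho}$ should instead be obtained, exactly as in the 2D proof, from $\lambda=-\IM(\bar\phi\,\d_t\psi)$, $\d_t\sqrt{\rho}=\RE(\bar\phi\,\d_t\psi)$ and the $L^\infty_tL^2_x$ control of $\d_t\psi$ provided by Theorem \ref{thm:NLS}, together with the identity $\sqrt{\rho}\lambda=-\IM(\bar\psi\d_t\psi)$ and the NLS equation.
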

	\vspace{0.1cm}
	
	\subsection{Global existence of spherically symmetric weak solutions}
	
	Now we consider the spherically symmetric hydrodynamic state in the sense that
	\begin{equation*}
		\sqrt{\rho_0}(x)=\sqrt{\rho_0}(r)\quad and\quad \Lambda_0(x)=\Lambda_0(r)\frac{x}{|x|},
	\end{equation*}
	where $x\in\R^d$, $r=|x|$ and $\Lambda_0(r)\in \R$. With suitable regularity assumption on $(\sqrt{\rho_0},\Lambda_0)$, we can prove the global existence of spherically symmetric weak solutions to the QHD system in the finite energy space and in the regular class of the functional \eqref{eq:higher_md}.
	
	\begin{thm}\label{thm:glob_s}
		Let $(\rho_0, J_0)$ be a spherically symmetric initial data given by the hydrodynamic state  
		\begin{equation*}
			\sqrt{\rho_0}(x)=\sqrt{\rho_0}(r)\quad and\quad \Lambda_0(x)=\Lambda_0(r)\frac{x}{|x|}
		\end{equation*}
		such that $\Lambda_0=0$ a.e. on the set $\{\rho_0=0\}$. Let us further assume the bounds
		\begin{equation}\label{eq:C1_s_intro}
			\|\sqrt\rho_0\|_{H^1(\R^d)}+\|\Lambda_0\|_{L^2(\R^d)}\le M_1,
		\end{equation}
		for some $M_1>0$.
		Then there exists a spherically symmetric global in time finite energy weak solution to the Cauchy problem \eqref{eq:QHD_md} which conserves the total energy for all times, i.e. $E(t)=E(0)$. In particular we have
		\begin{equation*}
			\|\sqrt{\rho}\|_{L^\infty(0, T;H^1(\R^d))}+\|\Lambda\|_{L^\infty(0, T;L^2(\R^d))}\leq C(M_1),
		\end{equation*}
		for some $C(M_1)>0$.
	\end{thm}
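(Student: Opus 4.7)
The plan is to mirror the wave function lifting strategy used in the two-dimensional point-vortex case, but exploiting spherical symmetry to reduce the whole lifting question to a one-dimensional problem. Since any spherically symmetric velocity field $v_0(r)\,x/|x|$ is automatically a gradient, the generalized irrotationality condition imposes no additional quantization constraint here, and the sole technical ingredient inherited from the hydrodynamic side is the hypothesis $\Lambda_0=0$ a.e.\ on $\{\rho_0=0\}$. Once a radial $\psi_0\in H^1(\R^d)$ with $|\psi_0|^2=\rho_0$ and $\IM(\bar\psi_0\nabla\psi_0)=J_0$ has been produced, I would solve the associated semilinear Schr\"odinger equation $i\d_t\psi=-\tfrac12\Delta\psi+f'(|\psi|^2)\psi$ with datum $\psi_0$, observe that rotational invariance of the equation keeps $\psi(t,\cdot)$ radial for all times, and then apply the polar factorization of \cite{AM1,AM2} to recover $(\sqrt\rho(t),\Lambda(t))$ and conclude that this pair is a finite energy weak solution of \eqref{eq:QHD_md}.

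\textbf{Radial lifting.} Seeking $\psi_0(x)=\phi_0(r)$ with $r=|x|$, the constraints $|\psi_0|^2=\rho_0$ and $\IM(\bar\psi_0\nabla\psi_0)=J_0$ reduce to the scalar problem $|\phi_0(r)|^2=\rho_0(r)$ and $\IM(\bar\phi_0\phi_0'(r))=\sqrt{\rho_0(r)}\Lambda_0(r)$, which is precisely the one-dimensional wave function lifting problem solved in \cite{AMZ} for the radial profile. The assumption $\Lambda_0=0$ on $\{\rho_0=0\}$ allows one to assign a constant phase on each vacuum shell and then define the phase $S_0$ by integrating $\Lambda_0(s)/\sqrt{\rho_0(s)}$ over the positivity set, producing $\phi_0=\sqrt{\rho_0}\,e^{iS_0}$. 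A direct pointwise computation at points where $\rho_0>0$ gives
\begin{equation*}
|\nabla\psi_0(x)|^2=|\nabla\sqrt{\rho_0}(x)|^2+|\Lambda_0(x)|^2\qquad\text{a.e.\ in }\R^d,
\end{equation*}
and the identity extends to the vacuum set by $\Lambda_0=0$ there, so $\psi_0\in H^1(\R^d)$ with norm controlled by the constant $M_1$ in \eqref{eq:C1_s_intro}.

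\textbf{Dynamics and closure.} For the pressure laws allowed by the paper, the nonlinearity $f'(|\psi|^2)\psi$ is $H^1$-subcritical both for $d=2$ (any $\gamma<\infty$) and for $d=3$ (by the assumption $\gamma<3$), so the Cauchy problem for the associated NLS equation admits a global solution $\psi\in C(\R;H^1(\R^d))$ conserving the Schr\"odinger energy $\tfrac12\int|\nabla\psi|^2+f(|\psi|^2)\,dx$. Radial symmetry of the initial datum is preserved, and setting $\sqrt\rho=|\psi|$ together with the polar factorization produces a spherically symmetric pair $(\sqrt\rho,\Lambda)$ which, by the NLS-to-QHD correspondence of \cite{AM1,AM2}, is a finite energy weak solution to \eqref{eq:QHD_md}. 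The pointwise identity $|\nabla\psi|^2=|\nabla\sqrt\rho|^2+|\Lambda|^2$ shows that the Schr\"odinger energy equals $E(t)$, hence $E(t)=E(0)$ and the uniform bound $\|\sqrt\rho\|_{L^\infty(0,T;H^1)}+\|\Lambda\|_{L^\infty(0,T;L^2)}\le M_1$ follow.

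\textbf{Main obstacle.} I expect the delicate point to be the lifting step, namely verifying that the one-dimensional construction of \cite{AMZ}, once reinterpreted as a radial function on $\R^d$, genuinely lies in $H^1(\R^d)$ rather than in $H^1_{\text{loc}}$ of the radial variable only. Away from the origin the continuity of the radial profile of $\sqrt{\rho_0}$ follows from the 1D Sobolev embedding applied on any annulus, so the lifting and the pointwise identity above can be performed as in \cite{AMZ}; the work consists in controlling the behavior at $r=0$, where the weight $r^{d-1}$ is degenerate and radial $H^1(\R^d)$ functions need not be continuous, and in checking that the phase integration across the (possibly countably many) vacuum shells produces no distributional jump in $\nabla\psi_0$. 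Once this is settled the argument is a direct adaptation of \cite{AM1,AM2}.
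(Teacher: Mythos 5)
Your overall architecture coincides with the paper's: lift the radial data to a wave function, solve the ($H^1$-subcritical, defocusing) NLS globally, note that radiality is preserved, and return to the hydrodynamic variables via polar factorization (Theorem \ref{thm:QHD_old}), which also gives energy conservation and the bound by $M_1$. The gap is that the heart of the matter --- the lifting itself, i.e. Proposition \ref{prop:lift_s} --- is asserted rather than proved. Your construction (integrate $v_0=\Lambda_0/\sqrt{\rho_0}$ on the positivity set of the radial profile, assign constant phases on the vacuum components, set $\psi_0=\sqrt{\rho_0}e^{iS_0}$) runs into exactly the difficulty you defer to the end: $v_0$ need not be integrable up to the boundary of a positivity component (take $v_0\sim 1/\mathrm{dist}$, which is compatible with $\Lambda_0\in L^2$), so $S_0$ can oscillate or diverge there and the constant-phase gluing is not well defined as stated; one must then prove that the distributional gradient of $\psi_0$ has no singular part on $\partial\{\rho_0>0\}$ --- a set which for a general radial $H^1$ profile is just a closed set, not a union of nice ``shells'' --- and handle $r=0$, where the profile need not be continuous. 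Invoking the 1D lemma of \cite{AMZ} verbatim is also not quite licit, since the relevant space here is $H^1(\R_+,r^{d-1}dr)$ with a degenerate weight at the origin, not $H^1(\R)$. None of these points is fatal, but resolving them is precisely the content of the theorem, and your proposal leaves them open.

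The paper sidesteps the entire issue by never constructing a phase on the true positivity set. In the proof of Proposition \ref{prop:lift_s} one regularizes the amplitude, $\sqrt{\rho_n}=\sqrt{\rho_0}+\delta_n$ with $\delta_n(r)=\tfrac1n e^{-r^2/2}>0$, and sets $\Lambda_n=\tfrac{\sqrt{\rho_0}}{\sqrt{\rho_n}}\Lambda_0$; the hypothesis $\Lambda_0=0$ a.e.\ on $\{\rho_0=0\}$ enters only to give $\Lambda_n\to\Lambda_0$ in $L^2$ by dominated convergence. Since $\sqrt{\rho_n}>0$ everywhere, $v_n=\Lambda_n/\sqrt{\rho_n}\in L^1_{loc}$, the phase $S_n(r)=\int_{r_0}^r v_n$ is globally defined, and $\psi_n=\sqrt{\rho_n}e^{iS_n}$ is bounded in $H^1(\R_+,r^{d-1}dr)$ uniformly by \eqref{eq:C1_s_intro}. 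Passing to the weak $H^1$ limit $\psi_n\rightharpoonup\psi$ and to the weak-$*$ limit $e^{iS_n}\overset{*}{\rightharpoonup}\phi$ in $L^\infty$, one identifies $\d_r\psi=\phi\left(\d_r\sqrt{\rho_0}+i\Lambda_0\right)$ with $\phi\in P(\psi)$, so $|\psi|=\sqrt{\rho_0}$ and $\IM(\bar\phi\d_r\psi)=\Lambda_0$, with no discussion of the vacuum structure, of jumps, or of the origin. If you want to keep your direct construction, you would have to supply the missing verification (absolute continuity of $\psi_0$ across each boundary point of the positivity set, vanishing of $\d_r\sqrt{\rho_0}$ and $\Lambda_0$ a.e.\ on the vacuum via Lemma \ref{lemma:LL}, and a removable-singularity argument at $r=0$, which has zero $H^1$-capacity for $d\ge2$); the approximation argument above is the cleaner route and is the one the paper takes.
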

	Analogously to the case with point vortices, also here we provide an existence result for regular weak solutions.
	\begin{thm}\label{thm:glob2_s}
		Let $(\rho_0, J_0)$ be a pair of spherically symmetric initial data satisfying the hypotheses of Theorem \ref{thm:glob_s}. We further assume that the initial data satisfy the following conditions:
		\begin{itemize}
			\item $\triangle\rho_0\in L^1_{loc}(\R^d)$, $\diver J_0\in L^1_{loc}(\R^d)$;
			\item the initial energy density $e_0:=\frac12|\nabla\sqrt{\rho_0}|^2+\frac12|\Lambda_0|^2+f(\rho_0)$ is continuous;
			\item the initial data satisfy the bounds
			\begin{equation}\label{eq:C2_s_intro}
				\|(\frac{|\Lambda_0|^2}{\sqrt{\rho_0}}-\triangle\sqrt{\rho_0})\mathbf{1}_{\{\rho_0>0\}}\|_{L^2}
				+\|\frac{\diver J_0}{\sqrt{\rho_0}}\mathbf{1}_{\{\rho_0>0\}}\|_{L^2}\leq M_2.
			\end{equation}
		\end{itemize}
		Then there exists a spherically symmetric $\lambda\in L^\infty(0, T; L^2(\R^d))$, such that $\lambda=0$ a.e. on $\{\rho=0\}$ and
		\begin{equation}\label{eq:rho_mu_s}
			\sqrt{\rho}\lambda=-\frac14\triangle\rho+e+p(\rho),
		\end{equation}
		where $e$ denotes the total energy density
		\begin{equation*}
			e=\frac12|\nabla\sqrt{\rho}|^2+\frac12|\Lambda|^2+f(\rho).
		\end{equation*}
		Moreover, for any $0<T<\infty$ we have
		\begin{equation}
			\|\d_t\sqrt{\rho}\|_{L^\infty(0, T;L^2(\R^d))}+\|\lambda\|_{L^\infty(0, T;L^2(\R^d))}\leq C(T, M_1, M_2).
		\end{equation}
		Furthermore, for such solutions the following bounds hold
		\begin{equation*}
			\|\rho\|_{L^\infty(0, T;H^2(\R^d))}+\|J\|_{L^\infty(0, T;H^1(\R^d))}+\|\sqrt{e}\|_{L^\infty(0, T;H^1(\R^d))}\leq C(T, M_1, M_2).
		\end{equation*}
	\end{thm}
	\vspace{0.1cm}
	
	\subsection{Dispersive estimates}
	It is well known that the nonlinear Schr\"odinger equation enjoys various types of dispersive estimates, see for instance \cite{GV, GVQ, Tao, Caz}. 
	In particular, most of the results concerning Morawetz and virial type estimates, show a strong connection with the underlying hydrodynamic structure associated to the nonlinear Schr\"odinger equation. The purpose of the following results is to provide intrinsically hydrodynamic dispersive estimates for the QHD system.

	\begin{thm}[Morawetz-type and dispersive estimates]\label{thm:disp}
		Let $(\rho, J)$ be a finite energy weak solution to \eqref{eq:QHD_md} satisfying the bound
		\[
		\|\sqrt{\rho}\|_{L^\infty(0, T;H^1(\R^d))}+\|\Lambda\|_{L^\infty(0, T;L^2(\R^d))}\leq M_1.
		\]
		\begin{itemize}
			\item[(i)] We have
			\begin{equation}\label{eq:disp_1}
				\||\nabla|^{\frac{3-d}{2}}\rho\|_{L^2(\R_t\times\R^d_x)}^2
				+\||\nabla|^{\frac{1-d}{2}}\sqrt{\rho p(\rho)}\|_{L^2(\R_t\times\R^d_x)}^2\lesssim M_1^4.
			\end{equation}
			\item[(ii)] If we further assume that $\int_{\R^d} |x|^2\rho_0(x)\,dx<\infty$ at initial time, and that the total energy $E(t)$, given by \eqref{eq:en_QHD_md} is non-increasing, then
			\begin{equation}\label{eq:disp_2}
				\|\nabla\sqrt{\rho}(t)\|_{L^2}+\|\Lambda(t)-\frac{x}{t}\sqrt{\rho}(t)\|_{L^2}+\|\rho^\frac{\gamma}{2}(t)\|_{L^2}\lesssim t^{-\sigma},
			\end{equation}
			where $\sigma=\min\{1, \frac{d}{2}(\gamma-1)\}$.
		\end{itemize}
	\end{thm}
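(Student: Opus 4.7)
My plan is to prove both parts by time-differentiating appropriate Morawetz/virial-type functionals written directly in the hydrodynamic variables $(\sqrt{\rho},\Lambda,J)$, so that the estimates apply to any finite energy weak solution without recourse to the Madelung correspondence.

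For part (i), I would run a purely hydrodynamic interaction Morawetz computation. The natural object is
\[
M(t):=\int_{\R^d}\int_{\R^d}\rho(t,y)\,J(t,x)\cdot\nabla_x\Phi(x-y)\,dx\,dy,
\]
with radial convex weight $\Phi(x)=|x|$. Differentiating $M(t)$ using the continuity equation on the $\rho(t,y)$ factor and the momentum balance on $J(t,x)$, and symmetrizing in $(x,y)$, integration by parts produces three main contributions: a nonnegative convex quadratic form in the relative velocities $v(x)-v(y)$ (which is discarded); a pressure term with kernel $-\Delta\Phi(x-y)\propto|x-y|^{-1}$ coupling $\rho(y)$ with $p(\rho(x))$; and a quantum-pressure term with kernel $\Delta^2\Phi(x-y)\propto|x-y|^{-3}$ (with a $\delta_0$ correction at $d=3$) coupling $\rho(x)$ with $\rho(y)$. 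By Plancherel, after a Cauchy-Schwarz symmetrization of the $\rho$--$p(\rho)$ product in the first term, these two double integrals become exactly $\||\nabla|^{(1-d)/2}\sqrt{\rho p(\rho)}\|_{L^2_{t,x}}^2$ and $\||\nabla|^{(3-d)/2}\rho\|_{L^2_{t,x}}^2$ upon integrating in $t$. Finally $|M(t)|\lesssim\|\rho\|_{L^1}\|\Lambda\|_{L^2}\|\sqrt{\rho}\|_{L^2}\lesssim M_1^4$ via $|J|=\sqrt{\rho}|\Lambda|$, the energy hypothesis, and $\|\rho\|_{L^1}=\|\sqrt{\rho}\|_{L^2}^2\le M_1^2$, which closes the estimate.

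For part (ii), I would introduce the hydrodynamic pseudoconformal energy
\[
H(t):=\tfrac12\int\bigl|x\sqrt{\rho}-t\Lambda\bigr|^2dx+t^2\int|\nabla\sqrt{\rho}|^2\,dx+2t^2\int f(\rho)\,dx,
\]
which corresponds under Madelung to the classical $\tfrac12\|(x+2it\nabla)\psi\|_{L^2}^2+8t^2\int F(|\psi|^2)\,dx$. The second-moment hypothesis together with the energy bound gives $H(0)<\infty$. Differentiating $H$ along the QHD flow, the kinetic and quantum quadratic pieces cancel and one is left with an identity of the form
\[
H'(t)=c\,t\,\bigl(2-d(\gamma-1)\bigr)\int f(\rho)\,dx.
\]
In the mass-critical/supercritical regime $\gamma\ge 1+2/d$ the right-hand side is nonpositive, so $H(t)\le H(0)$, and dividing by $t^2$ reads off the $t^{-1}$ decay for each of the three quantities in \eqref{eq:disp_2}. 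In the mass-subcritical regime $\gamma<1+2/d$, a Gagliardo-Nirenberg inequality applied to $\sqrt{\rho}$ bounds $\int f(\rho)\lesssim\|\sqrt{\rho}\|_{L^2}^{2\gamma-d(\gamma-1)}\|\nabla\sqrt{\rho}\|_{L^2}^{d(\gamma-1)}\lesssim C_{M_1}\bigl(H(t)/t^2\bigr)^{d(\gamma-1)/2}$, turning the identity into an ODE inequality $H'(t)\lesssim t^{1-d(\gamma-1)}H(t)^{d(\gamma-1)/2}$ whose integration gives $H(t)\lesssim t^{2-d(\gamma-1)}$, and hence the rate $t^{-\sigma}=t^{-d(\gamma-1)/2}$ for all three norms in \eqref{eq:disp_2}.

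The main technical obstacle in both parts is the rigorous justification of these identities at the regularity of finite energy weak solutions: the quantities $J\otimes J/\rho$, the quantum stress tensor, and $x\sqrt{\rho}-t\Lambda$ are delicate near the vacuum set $\{\rho=0\}$, and the formal time-derivative manipulations cannot be carried out pointwise in $t$. My plan is to perform the computations first for the more regular solutions produced by Theorems \ref{thm:glob2_md} and \ref{thm:glob2_s} (or by a direct viscous regularization of \eqref{eq:QHD_md}), and then pass to the limit using the uniform energy bound $M_1$ and the weak lower semicontinuity of the dispersive norms sitting on the left-hand sides of \eqref{eq:disp_1}--\eqref{eq:disp_2}. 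An additional subtlety in part (i) occurs in low dimensions $d=2,3$, where the kernel $\Delta^2\Phi$ develops a Dirac mass at the origin and the $|x|^{-3}$ kernel is only conditionally integrable; this has to be handled by a careful distributional integration by parts against the convex weight, combined with a mollification of $\Phi$ whose limit is taken at the end.
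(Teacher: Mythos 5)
Your part (i) follows essentially the paper's own route: the paper proves exactly this estimate as a Morawetz bound for the Euler--Korteweg system (Proposition \ref{prop:mor_md}, specialized to $\kappa(\rho)=\frac{1}{4\rho}$, $K(\rho)=\rho/4$), using the interaction functional $\int\rho(t,y)\frac{x-y}{|x-y|}\cdot J(t,x)\,dx\,dy$, the nonnegativity of $B(x,y)=\nabla_x\frac{x-y}{|x-y|}$ after symmetrizing in $(x,y)$, and the Riesz kernel $|x-y|^{-1}$ to produce the fractional derivatives. The only real difference is technical: the paper never computes $\Delta^2|x|$, but instead rewrites the capillary term as $\nabla\triangle K(\rho)$ plus divergence-form nonnegative terms and integrates by parts once, keeping the kernel $(d-1)|x-y|^{-1}$ throughout; this sidesteps the delta-mass and conditional-integrability issues you flag in low dimension.

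The genuine gap is in part (ii), in the subcritical regime $1<\gamma<1+2/d$. From the identity $H'(t)=c\,t\,(2-d(\gamma-1))\int f(\rho)\,dx$ you close with Gagliardo--Nirenberg, $\int f(\rho)\lesssim (H(t)/t^2)^{d(\gamma-1)/2}$, and claim that integrating $H'(t)\lesssim t^{1-d(\gamma-1)}H(t)^{d(\gamma-1)/2}$ gives $H(t)\lesssim t^{2-d(\gamma-1)}$. It does not: setting $a=d(\gamma-1)\in(0,2)$, separation of variables yields $H^{1-a/2}(t)\lesssim H^{1-a/2}(t_0)+t^{2-a}$, i.e. $H(t)\lesssim t^{2}$, which is the trivial growth already implied by energy conservation and finite variance, and after dividing by $t^2$ it produces no decay whatsoever. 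The paper closes this case by a different, and here decisive, mechanism: since all summands of the pseudo-conformal energy are nonnegative, one has $t^{2}\int f(\rho)\,dx\le V(t)$ pointwise in time, so the quantity $F(t)=\frac{t^2}{\gamma}\int\rho^\gamma\,dx$ satisfies the \emph{linear} integral inequality $F(t)\le(2-a)\int_0^t\frac{F(s)}{s}\,ds+\int\frac{|x|^2}{2}\rho_0\,dx$, whose Gronwall solution is $F(t)\lesssim t^{2-a}$. This gives the decay $\int\rho^\gamma\,dx\lesssim t^{-a}$ first, and only then is it inserted into $V'(t)\lesssim t\int\rho^\gamma\,dx\lesssim t^{1-a}$ to conclude $V(t)\lesssim t^{2-a}$ and hence the rate $t^{-\sigma}$ with $\sigma=\frac{d}{2}(\gamma-1)$. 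Your Gagliardo--Nirenberg step destroys this linear structure, replacing it by a sublinear ODE whose self-similar solution is $t^2$; you should replace it by the linear Gronwall on $F$. Two further points: the propagation of finite variance $\int|x|^2\rho(t,x)\,dx<\infty$ for $t>0$, needed even to define and differentiate your $H(t)$, has to be established (the paper does this with a cutoff $\Theta_r$ and Gronwall); and your plan to justify the formal identities by approximating with the regular solutions of Theorems \ref{thm:glob2_md} and \ref{thm:glob2_s} cannot cover an arbitrary finite energy weak solution, since those solutions exist only for special (point-vortex or spherically symmetric) data, while the statement concerns a given general weak solution.
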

	
	\begin{rem}
		The dispersive estimates \eqref{eq:disp_1} and \eqref{eq:disp_2} hold for general weak solutions to the QHD system with finite energy. We emphasize that actually such estimates are also valid for a more general class of Euler-Korteweg type equations, as stated in Proposition \ref{prop:mor_md}.
	\end{rem}
	
	\subsection{Stability}
	
	The last result we present in this paper is the compactness property for the class of weak solutions satisfying the uniform bounds given by the energy and the functional $I(t)$. Here we need some additional assumptions on the sequence of solutions, for example the positivity of mass densities, or the spherical symmetry of solutions. Nevertheless let us remark we do not need a uniform lower bound on the mass density, and also our result below holds true for a general class of solutions, not necessarily those one constructed in the existence theorems given above.
	
	\begin{thm}[Stability]\label{thm:stab_md}
		Let us assume $\{(\rho_n, J_n)\}_{n\geq1}$ is a sequence of weak solutions to \eqref{eq:QHD_md} satisfying the uniform bounds 
		\begin{equation}\label{eq:unif1_md_intro}
			\begin{aligned}
				&\|\sqrt{\rho_n}\|_{L^\infty(0, T;H^1(\R^d))}+\|\Lambda_n\|_{L^\infty(0, T;L^2(\R^d))}\leq M_1,\\
				&\|\d_t\sqrt{\rho_n}\|_{L^{\infty}(0, T;L^2(\R^d))}+\|\lambda_n\|_{L^\infty(0, T;L^2(\R^d))}\leq M_2,
			\end{aligned}
		\end{equation}
		where $\lambda_n$ is implicitly given by the identity \eqref{eq:rho_mu_md}. 
		Let us also assume at least one of the following conditions for all $n\in\N$ and almost every $t\in [0,T]$ holds
		\begin{itemize}
			\item[(1)] $\rho_n(t,\cdot)$ is continuous with $\rho_n(t,\cdot)>0$;
			\item[(2)] $(\rho_n, J_n)$ is a spherically symmetric solution, and the total energy density $e_n(t, \cdot)=\frac12|\nabla\sqrt\rho_n|^2+\frac12|\Lambda_n|^2+f(\rho_n)$ is continuous.
		\end{itemize}
		Then up to subsequences, we have that
		\begin{equation*}
			\begin{aligned}
				\sqrt{\rho_n}\to&\sqrt{\rho}&\textrm{in}\;L^p(0, T;H^1_{loc}(\R^d)),\\
				\Lambda_n\to&\Lambda&\textrm{in}\;L^p(0, T;L^2_{loc}(\R^d)),
			\end{aligned}
		\end{equation*}
		where $(\sqrt{\rho}, \Lambda)$ is a finite energy weak solution to \eqref{eq:QHD_md} and $2\le p<\infty$.
	\end{thm}
	
	We conclude this introduction by discussing some existing mathematical literature.
	The QHD system, together with similar evolutionary PDE models, is widely studied. Previous results can be found in \cite{JMR, LM, GM, JLM, HLM, HLMO}. System \eqref{eq:QHD_md} is also intimately related to the class of Euler-Korteweg fluids \cite{BG}, encoding capillary effects in the description; local and global analysis of small, regular perturbations of constant states are discussed in \cite{BGDD, AH}. The method of convex integration can be applied also to Euler-Korteweg system in order to show the existence of infinitely many solutions emanating from the same initial data \cite{DFM}, see also \cite{MS} where a non-uniqueness result for system \eqref{eq:QHD_md} is given by adopting a different strategy, related to the underlying wave function dynamics.
	Recently also a class of viscous quantum fluid dynamical systems was considered. Such models can be derived from the Wigner-Fokker-Planck equation \cite{JLMG}, see also the review \cite{J}. The analysis of finite energy weak solutions for the quantum Navier-Stokes system was done in \cite{JqNS, AS1, AS2, LV}, see also \cite{LLX, LX} where similar arguments are used to study the compressible Navier-Stokes system with degenerate viscosity.
	
	We structure the contents of this paper as follows. Some notations and preliminary results that will be used through this paper are given in Section \ref{sect:prel}. The wave function lifting method, together with Theorem \ref{thm:glob_md} and Theorem \ref{thm:glob2_md}, for hydrodynamic data with pointwise vacuum and quantised vorticity is proved in Section \ref{sect:lift_2d}. On the other hand in Section \ref{sect:lift_s} we introduce the spherically symmetric QHD system and prove the wave function lifting method for spherically symmetric data, and as a consequence the global existence Theorem \ref{thm:glob_s}, \ref{thm:glob2_s} for spherically symmetric initial data is also obtained in Section \ref{sect:lift_s}. In Section \ref{sect:apri_md} we provide some a priori estimates on solutions to system \eqref{eq:QHD_md}, and finally we show the stability of solutions in Section \ref{sect:comp_md}.
	
	\section{Notations and Preliminaries}\label{sect:prel}

	In this Section we fix the notation and provide some preliminary results that will be used through this paper.
	
	The standard notation for Lebesgue and Sobolev norms are given by
	
	\[
	||f||_{L_{x}^{p}}\coloneqq(\int_{\R^d}|f(x)|^{p}dx)^{\frac{1}{p}},
	\]
	\[
	||f||_{W_{x}^{k,p}}\coloneqq\sum_{j=0}^k||\nabla^j f||_{L_{x}^{p}},
	\]
	and we denote $H_{x}^{k}\coloneqq H^{k}(\R^d)=W^{k,2}(\R^d)$.
	The mixed Lebesgue norm of functions $f:I\to L^{r}(\R^d)$ is
	defined as 
	\[
	||f||_{L_{t}^{q}L_{x}^{r}}\coloneqq\left(\int_{I}||f(t)||_{L_{x}^{r}}^{q}dt\right)^{\frac{1}{q}}=\left(\int_{I}(\int_{\R^d}|f(x)|^{r}dx)^{\frac{q}{r}}dt\right)^{\frac{1}{q}},
	\]
	where $I\subset[0,\infty)$ is a time interval. Similarly the mixed
	Sobolev norm $L_{t}^{q}W_{x}^{k,r}$ is defined.

	We recall here some basic properties of the following nonlinear Schr\"odinger equation
	\begin{equation}\label{eq:NLS_md}
		\left\{\begin{aligned}
			i\d_t\psi=&-\frac12\triangle\psi+|\psi|^{2(\gamma-1)}\psi\\
			\psi(0)=&\psi_0,
		\end{aligned}\right.
	\end{equation}
	and we mainly consider $\gamma\in(1, \infty)$ for $x\in\R^2$ and $1<\gamma<3$ for $x\in\R^3$.
	
	The reader can find more details and the proofs of the statements of the following theorem in the comprehensive monographs \cite{Caz, LP, Tao}.
	\begin{thm}\label{thm:NLS}
		Let $\psi_0\in H^1(\R^d)$, $\gamma>1$ for $d=2$ and $1<\gamma<3$ for $d=3$, then there exists a unique global solution $\psi\in\mathcal C(\R; H^1(\R^d))$ to \eqref{eq:NLS_md} such that the total mass and energy are conserved at all times, i.e.
		\begin{equation}\label{eq:cons_NLS}
			\begin{aligned}
				M(t)=&\int_{\R^d}|\psi(t, x)|^2\,dx=M(0)\\
				E(t)=&\int_{\R^d}\frac{1}{2}|\nabla\psi|^{2}+\frac1\gamma|\psi|^{2\gamma}\,dx=E(0).
		\end{aligned}\end{equation}
		If moreover $\psi_0\in H^2(\R^d)$, then we also have $\psi\in\mathcal C(\R;H^2(\R^d))\cap\mathcal C^1(\R;L^2(\R^d))$ and for any $0<T<\infty$ 
		\begin{equation}\label{eq:H2}
			\|\psi\|_{L^\infty(0, T;H^2(\R^d))}+\|\d_t\psi\|_{L^\infty(0, T; L^2(\R^d))}\leq C(T, \|\psi_0\|_{H^2(\R^d)}).
		\end{equation}
	\end{thm}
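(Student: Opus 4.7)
The plan is to follow the standard contraction-mapping approach for $H^1$-subcritical defocusing NLS, with the details exactly as laid out in Cazenave or Tao. First I would set up local well-posedness via the Duhamel formulation
\[
\psi(t) = e^{it\Delta/2}\psi_0 - i\int_0^t e^{i(t-s)\Delta/2}\bigl(|\psi|^{2(\gamma-1)}\psi\bigr)(s)\,ds,
\]
and close a contraction on a ball in $C([0,T];H^1(\R^d))\cap L^q([0,T];W^{1,r}(\R^d))$ for an admissible Strichartz pair $(q,r)$. The key nonlinear estimate combines the Strichartz inequality for the free group $e^{it\Delta/2}$ with Sobolev embedding $H^1(\R^d)\hookrightarrow L^p(\R^d)$; the subcriticality assumptions ($\gamma<\infty$ for $d=2$ where $H^1$ embeds into every $L^p$, and $\gamma<3$ for $d=3$ which corresponds to the $H^1$-critical exponent $2^\ast=6$) guarantee that $\||\psi|^{2(\gamma-1)}\psi\|_{L^{q'}_tW^{1,r'}_x}$ is controlled by a power of $\|\psi\|_{L^q_t W^{1,r}_x}$ with a small prefactor when $T$ is small.

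Next I would establish the conservation laws. Mass conservation follows formally by multiplying the equation by $\bar\psi$ and integrating the imaginary part, while energy conservation follows from pairing with $\partial_t\bar\psi$. At $H^1$ regularity the latter manipulation is not directly justified, so the route is to approximate $\psi_0$ by $H^2$ data, prove the two identities for the smooth solutions where each step is rigorous, and then pass to the limit using continuous dependence of the flow on initial data. The defocusing sign of the nonlinearity makes the energy functional $E(\psi)=\tfrac12\|\nabla\psi\|_{L^2}^2+\tfrac1\gamma\|\psi\|_{L^{2\gamma}}^{2\gamma}$ coercive, yielding the a priori bound $\|\psi(t)\|_{H^1}\le C(\|\psi_0\|_{H^1})$ uniformly in $t$; iterating the local existence theorem on time steps of uniform length then produces a global solution in $\mathcal C(\R;H^1)$.

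For the $H^2$ propagation, I would differentiate the equation once in space and run the same Strichartz contraction at the next regularity level, persisting $\|\psi\|_{L^\infty_t H^2}$ on any compact time interval with a bound depending only on $T$ and $\|\psi_0\|_{H^2}$. Given $\psi\in \mathcal C(\R;H^2)$, the equation itself reads $i\partial_t\psi=-\tfrac12\Delta\psi+|\psi|^{2(\gamma-1)}\psi$; the first term is in $L^2$, and the second lies in $L^2$ by Sobolev embedding from $H^2$ (both in $d=2$ for any $\gamma>1$, and in $d=3$ for $\gamma<3$). Hence $\partial_t\psi\in L^\infty(0,T;L^2)$, which gives the bound \eqref{eq:H2} and the continuity statement $\psi\in\mathcal C^1(\R;L^2)$.

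The only genuinely delicate point is the rigorous justification of energy conservation at $H^1$ regularity, which requires either a careful density argument or a time-derivative commutator estimate; all other steps are by now textbook. This is why the author has simply cited \cite{Caz, LP, Tao} rather than reproduced a proof.
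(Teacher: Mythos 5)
Your outline—local well-posedness by a Strichartz contraction in $\mathcal C_tH^1\cap L^q_tW^{1,r}_x$ using $H^1$-subcriticality ($\gamma<\infty$ for $d=2$, $\gamma<3$ for $d=3$), conservation laws justified by approximation with $H^2$ data, globalization from the coercive defocusing energy, and persistence of $H^2$ regularity with $\partial_t\psi\in L^\infty_tL^2_x$ read off from the equation—is correct and is precisely the standard argument in the monographs \cite{Caz, LP, Tao}, which the paper cites in place of a proof. So you are taking essentially the same (deferred) route as the paper, and you correctly flag the only delicate step, namely the rigorous justification of energy conservation at $H^1$ regularity.
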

	
	Next we are going to recall the polar factorization technique developed in \cite{AM1, AM2}, see also the reviews \cite{AMDCDS, AM3}. This method allows to define the hydrodynamic states 
	$(\sqrt{\rho}, \Lambda)$ and sets up a correspondence between the wave function dynamics and the hydrodynamical system.
	The main advantage of this approach with respect to the usual WKB method for instance is that vacuum regions are allowed in the theory. Here we only give a brief introduction and state the results we will exploit later, for a more detailed presentation and for a proof of the statements in Lemma \ref{lemma:polar} below we address to Section 3 in \cite{AHMZ}.
	
	Given any function $\psi\in H^1(\R^d)$, we can define the set of polar factors as
	\begin{equation}\label{eq:set_pol}
		P(\psi)\coloneqq\left\{ \phi\in L^\infty(\R^d)\ |\ \|\phi\|_{L^{\infty}}\leq1,\ \sqrt{\rho}\phi=\psi\ a.e.\right\} ,
	\end{equation}
	where $\sqrt{\rho}:=|\psi|$. 
	In general this set can contain more than one element, due to the possible appearance of vacuum regions. Nevertheless, as it will be shown in the next lemma, the hydrodynamical states $(\sqrt{\rho}, \Lambda)$ are well defined and they furthermore enjoy suitable stability properties.
	
	\begin{lem}[Polar factorization \cite{AM1, AM2}]\label{lemma:polar}
		Let $\psi\in H^{1}(\R^d)$ and $\sqrt{\rho}:=|\psi|$, and let $\phi\in P(\psi)$. Then we have $\nabla\sqrt{\rho}=\RE(\bar\phi\nabla\psi)\in L^2(\R^d)$ and if we set 
		$\Lambda:= Im(\overline{\phi}\nabla\psi)$, we have 
		\begin{equation}\label{eq:quad}
			\RE(\nabla\overline{\psi}\otimes\nabla\psi)= \nabla\sqrt{\rho}\otimes\nabla\sqrt{\rho}
			+\Lambda\otimes\Lambda,\quad{\rm a.e.\; in}\;\R^d.
		\end{equation}
		Moreover if $\{\psi_n\}\subset H^1$ satisfies $\|\psi_n-\psi\|_{H^1}\to0$ as $n\to\infty$, then we have
		\begin{equation*}
			\nabla\sqrt{\rho_n}\to\nabla\sqrt{\rho},\quad\Lambda_n\to\Lambda,\quad{\rm in}\;L^2,
		\end{equation*}
		with $\sqrt{\rho_n}:=|\psi_n|, \Lambda_n:= \IM(\bar\phi_n\nabla\psi_n)$, $\phi_n$ being a unitary factor for $\psi_n$.
		\newline
		Finally, by defining $J=\sqrt{\rho}\Lambda$, then the following identity is satisfied in the sense of distributions
		\begin{equation}\label{eq:gic_pf}
			\nabla\wedge J=2\nabla\sqrt{\rho}\wedge\Lambda.
		\end{equation}
	\end{lem}
	
	We stress here the fact that $\Lambda=\IM(\bar\phi\nabla\psi)$ is well-defined even if $\phi$ is not uniquely determined, this is a consequence of Theorem 6.19 in \cite{LL}, which we will state below as it will be thoroughly used in our exposition.
	\begin{lem}\label{lemma:LL}
		Let $g:\Omega\to\mathbb{R}$ be in $H^{1}(\Omega)$, and 
		\[
		B=g^{-1}(\{0\})=\left\{ x\in\Omega\ :\ g(x)=0\right\} .
		\]
		
		Then $\nabla g(x)=0$ for almost every $x\in B$.
	\end{lem}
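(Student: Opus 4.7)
My strategy is to truncate $g$ by a smooth cutoff vanishing near $0$, apply the classical chain rule in $H^1$, and pass to the limit to recover $g$ itself together with the desired vanishing of its gradient on the level set $B$.

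Concretely, for each $\eps>0$ I would fix $F_\eps\in C^1(\R)$ with $F_\eps(0)=0$, $F_\eps'(t)=0$ for $|t|\le\eps$, $F_\eps'(t)=1$ for $|t|\ge 2\eps$, and $0\le F_\eps'\le 1$ everywhere. Then $|F_\eps(t)|\le|t|$, $F_\eps(t)\to t$ pointwise, and $F_\eps'(t)\to\chi_{\{t\ne 0\}}(t)$ pointwise as $\eps\to 0$. The classical chain rule for $C^1$ compositions with Sobolev functions yields $F_\eps\circ g\in H^1(\Omega)$ together with the almost everywhere identity
\[
\nabla(F_\eps\circ g)=F_\eps'(g)\,\nabla g.
\]

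Passing to the limit $\eps\to 0$, the dominated convergence theorem gives $F_\eps\circ g\to g$ in $L^2(\Omega)$ (dominated by $|g|\in L^2$) and $F_\eps'(g)\,\nabla g\to \chi_{\{g\ne 0\}}\nabla g$ in $L^2(\Omega)$ (dominated by $|\nabla g|\in L^2$). By uniqueness of the weak gradient I conclude
\[
\nabla g=\chi_{\{g\ne 0\}}\nabla g\quad\text{a.e. in }\Omega,
\]
which is exactly $\nabla g=0$ a.e. on $B$.

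The only delicate step is the chain rule identity $\nabla(F_\eps\circ g)=F_\eps'(g)\nabla g$; this is classical and can be obtained, if one does not want to quote it as a black box, by approximating $g$ by smooth functions in $H^1(\Omega)$ and using the uniform $L^\infty$ bound on $F_\eps'$ to pass the elementary $C^1$ chain rule to the $H^1$ limit. Beyond that, the argument is purely soft: it is local in $\Omega$, requires no regularity of $\partial\Omega$, and uses only the square integrability of $g$ and $\nabla g$. I do not anticipate any further subtlety.
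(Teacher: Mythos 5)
Your argument is correct. The paper does not prove this lemma at all: it simply quotes it as Theorem 6.19 of Lieb--Loss \cite{LL}. Your truncation-plus-chain-rule proof (compose with $C^1$ cutoffs $F_\eps$ having $F_\eps(0)=0$ and $0\le F_\eps'\le1$, use $\nabla(F_\eps\circ g)=F_\eps'(g)\nabla g$, and pass to the limit by dominated convergence together with the closedness of the weak gradient) is precisely the standard textbook argument behind that cited result, so it supplies a complete, self-contained justification of what the paper treats as a black box; the only hypotheses you use, namely $F_\eps(0)=0$ for integrability on an unbounded $\Omega$ and the uniform bound $|F_\eps'|\le1$ for domination, are exactly the right ones, and no further gap remains.
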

	
	By combining the well-posedness result for the NLS equation \eqref{eq:NLS_md} stated in Theorem \ref{thm:NLS} and the polar factorization method recalled in Lemma \ref{lemma:polar} 
	we can prove an existence result for finite energy weak solutions to \eqref{eq:QHD_md}, see Theorem \ref{thm:QHD_old} below. This result is  proved in \cite{AM1, AM2} for the two and three dimensional cases, see also the review \cite{AHMZ}.
	
	Before stating the results let us first recall a useful identity regarding the nonlinear dispersive term on the right hand side of the equation for the momentum density in \eqref{eq:QHD_md}, namely it can also be written as
	\begin{equation}\label{eq:bohm_md}
		\frac{1}{2} \rho\nabla\left(\frac{\triangle\sqrt{\rho}}{\sqrt{\rho}}\right)
		=\frac{1}{4}\diver(\rho\nabla^2\log\rho)
		=\frac{1}{4}\nabla\triangle\rho-\diver(\nabla\sqrt{\rho}\otimes\nabla\sqrt{\rho}).
	\end{equation}
	By using identity \eqref{eq:bohm_md}, system \eqref{eq:QHD_md} then becomes
	\begin{equation}\label{eq:QHD_md_2}
		\left\{\begin{aligned}
			&\d_t\rho+\diver J=0\\
			&\d_tJ+\diver\left(\Lambda\otimes\Lambda+\nabla\sqrt{\rho}\otimes\nabla\sqrt{\rho}\right)+\nabla p(\rho)=\frac14\nabla\triangle\rho
		\end{aligned}\right.
	\end{equation}
	We can now give the definition of weak solutions to the QHD system.
	
	\begin{defn}[Weak solutions]\label{def:FEWS}
		Let $\rho_0, J_0\in L^1_{loc}(\R^d)$, we say the pair $(\rho, J)$ is a finite energy weak solution to the Cauchy problem for \eqref{eq:QHD_md} with initial data
		$
		\rho(0)=\rho_0$, $J(0)=J_0,
		$
		in the space-time slab $[0, T)\times\R^d$ if there exist  
		$
		\sqrt{\rho}\in L^2_{loc}(0, T;H^1_{loc}(\R^d))$, $\Lambda\in L^2_{loc}(0, T;L^2_{loc}(\R^d))
		$
		such that
		\begin{itemize}
			\item[(i)] $\rho:=(\sqrt{\rho})^2, J:=\sqrt{\rho}\Lambda$;
			\item[(ii)] $\forall\;\eta\in\mathcal C^\infty_0([0, T)\times\R^d)$,
			\begin{equation}\label{eq:QHD_cty}
				\int_0^T\int_{\R^d}\rho\d_t\eta+J\cdot\nabla\eta\,dxdt+\int_{\R^d}\rho_0(x)\eta(0, x)\,dx=0;
			\end{equation}
			\item[(iii)] $\forall\zeta\in\mathcal C^\infty_0([0, T)\times\R^d;\R^d)$,
			\begin{equation}\label{eq:QHD_mom}
				\begin{aligned}
					\int_0^T\int_{\R^d}&J\cdot\d_t\zeta+\Lambda\otimes\Lambda:\nabla\zeta+p(\rho)\diver\zeta+\nabla\sqrt{\rho}\otimes\nabla\sqrt{\rho}:\nabla\zeta\\
					&+\frac{1}{4}\rho\Delta\diver\zeta\,dxdt
					+\int_{\R^d}J_0(x)\cdot\zeta(0, x)\,dx=0;
				\end{aligned}
			\end{equation}
			\item[(iv)] (generalized irrotationality condition) for almost every $t\in(0, T)$
			\begin{equation}\label{eq:gic_ws}
				\nabla\wedge J=2\nabla\sqrt{\rho}\wedge\Lambda,
			\end{equation}
			holds in the sense of distributions.
		\end{itemize}
		We say $(\rho, J)$ is a global in time weak solution if we can take $T=\infty$ in the above definition.
	\end{defn}
	
	\begin{rem}\label{rmk:vort}
		In the case of a smooth solution $(\rho, J)$, for which we can write $J=\rho v$, for some smooth velocity field $v$, the generalized irrotationality condition defined above is equivalent to $\rho\nabla\wedge v=0$, i.e. the velocity field $v$ is irrotational $\rho\,dx$ almost everywhere. It shows that the previous definition is the right weak formulation of the classical irrotationality condition $\nabla\wedge v=0$ valid away from vacuum in the WKB approach. The generalized irrotationality condition is also motivated by the quantised vortices of quantum flows, namely the flow is irrotational outside the set $\{\rho=0\}$ and in the vacuum the vorticity becomes singular. In this respect the solutions introduced in Definition \ref{def:FEWS} are more general than those obtained by using the WKB ansatz, since in the latter case the velocity field $v=\nabla S$ is always irrotational and there is no vacuum. Moreover, as we will explain in Remark \ref{rmk:qv_gic} below, it is possible to prove that hydrodynamic states satisfying the quantized vorticity condition, see \eqref{eq:QV_thm} for instance, also satisfy the generalized irrotationality condition. 
	\end{rem}
	
	In our paper we will always focus on weak solutions with finite mass and energy. 
	\begin{defn}[Finite mass and finite energy weak solutions]\label{def:FEWS_2}
		Let $(\rho,J)$ be a weak solution to the QHD system \eqref{eq:QHD_md} as in the Definition \ref{def:FEWS}. Let $M(t)$ be the total mass of $(\rho,J)$,
		\begin{equation*}
			M(t)=\int\rho(t, x)\,dx,
		\end{equation*}
		and $E(t)$ be the total energy denoted by
		\begin{equation*}
			E(t)=\int\frac12|\Lambda|^2+\frac12|\nabla\sqrt{\rho}|^2+f(\rho)\,dx.
		\end{equation*}
		We say $(\rho,J)$ is a finite mass weak solution, if for almost every $t\in[0, T)$ we have $M(t)\leq M(0)$. Analogously, $(\rho,J)$ is called a finite energy weak solution, if for almost every $t\in[0, T)$, we have $E(t)\leq E(0)$.
	\end{defn}
	
	\begin{rem}
		For a weak solution $(\rho, J)$ with finite initial mass, i.e. $M(0)<\infty$, it is straightforward to see that the total mass $M(t)$ is conserved for almost every time, see for example Theorem 1.3.4 in \cite{Daf}. A similar argument can also be applied to the total momentum
		\begin{equation*}
			P(t)=\int J(t, x)\,dx,
		\end{equation*}
		which is conserved for weak solutions with finite mass and finite energy.
	\end{rem}
	Without causing any confusion in this paper, we will always write finite energy weak solutions to mean weak solutions with both finite energy and mass.
	
	For initial data generated by wave functions, the existence of finite energy weak solutions was proved in \cite{AM1, AM2} (see also Theorem 4.2. in \cite{AHMZ}).
	\begin{thm}\label{thm:QHD_old}
		Let $\psi_0\in H^1(\R^d)$ and define the initial data for the QHD system \eqref{eq:QHD_md} as $\rho_0:=|\psi_0|^2, J_0:= \IM(\bar\psi_0\nabla\psi_0)$. Then there exists a global in time finite energy weak solution such that $\sqrt{\rho}\in L^\infty(\R; H^1(\R^d))$, 
		$\Lambda\in L^\infty(\R; L^2(\R^d))$, which conserves the energy at all times.
	\end{thm}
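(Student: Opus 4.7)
The plan is to use the wave function dynamics given by Theorem \ref{thm:NLS} and transfer the information to the hydrodynamic variables via the polar factorization of Lemma \ref{lemma:polar}. Concretely, let $\psi\in\mathcal{C}(\R;H^1(\R^d))$ be the unique global solution to \eqref{eq:NLS_md} with initial datum $\psi_0$, and for each $t\in\R$ let $\phi(t)\in P(\psi(t))$ be a polar factor. Then I would define
\[
\sqrt{\rho}(t):=|\psi(t)|,\qquad \Lambda(t):=\IM(\bar\phi(t)\nabla\psi(t)),
\]
set $\rho:=(\sqrt{\rho})^2$ and $J:=\sqrt{\rho}\Lambda$, and check all the requirements of Definition \ref{def:FEWS}. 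The identity $J=\IM(\bar\psi\nabla\psi)$ then follows pointwise a.e. from $\psi=\sqrt{\rho}\phi$, so the constructed hydrodynamic initial datum agrees with $(\rho_0,J_0)$.

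The first two items are structural. The weak continuity equation (ii) is obtained by testing $\partial_t|\psi|^2=2\RE(\bar\psi\partial_t\psi)$ against an arbitrary $\eta$ and substituting $\partial_t\psi$ from \eqref{eq:NLS_md}, which yields $\partial_t\rho+\diver J=0$ in $\mathcal{D}'$. The weak momentum equation (iii) is the key algebraic step: differentiating $J=\IM(\bar\psi\nabla\psi)$ in $t$, using \eqref{eq:NLS_md} to replace time derivatives, and rewriting $\RE(\nabla\bar\psi\otimes\nabla\psi)$ via the polar factorization identity \eqref{eq:quad} as $\nabla\sqrt{\rho}\otimes\nabla\sqrt{\rho}+\Lambda\otimes\Lambda$, together with the Madelung-type identity \eqref{eq:bohm_md} written in the form \eqref{eq:QHD_md_2}, produces exactly the desired distributional equation with the nonlinearity $|\psi|^{2(\gamma-1)}\psi$ matched to $p(\rho)=\frac{\gamma-1}{\gamma}\rho^\gamma$.

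Items (iv) and (v) follow from the polar factorization. The energy identity \eqref{eq:quad} immediately gives $|\nabla\psi|^2=|\nabla\sqrt{\rho}|^2+|\Lambda|^2$ a.e., so the NLS energy conservation \eqref{eq:cons_NLS} translates into conservation of the QHD energy \eqref{eq:en_QHD_md} for all $t\in\R$, and in particular $\sqrt{\rho}\in L^\infty(\R;H^1)$, $\Lambda\in L^\infty(\R;L^2)$. For the generalized irrotationality, a direct computation gives $\nabla\wedge J=\nabla\wedge\IM(\bar\psi\nabla\psi)=2\IM(\nabla\bar\psi\wedge\nabla\psi)$, which by polar factorization equals $2\nabla\sqrt{\rho}\wedge\Lambda$ a.e.

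The main technical obstacle is that $\psi\in H^1$ is too rough to legitimately perform the pointwise manipulations above, in particular the time derivative of $J$ and the rewriting of the quantum pressure tensor. I would handle this by approximation: take $\psi_0^n\in H^2(\R^d)$ with $\psi_0^n\to\psi_0$ in $H^1$, apply the $H^2$ persistence bound \eqref{eq:H2} of Theorem \ref{thm:NLS} so that all the NLS computations are classical and the weak formulation of \eqref{eq:QHD_md_2} holds for $(\rho^n,J^n)$, and then pass to the limit $n\to\infty$. The stability statement of Lemma \ref{lemma:polar} ensures $\nabla\sqrt{\rho^n}\to\nabla\sqrt{\rho}$ and $\Lambda^n\to\Lambda$ in $L^2$ uniformly on compact time intervals, which is enough to pass to the limit in every quadratic term (including $\Lambda^n\otimes\Lambda^n$ and $\nabla\sqrt{\rho^n}\otimes\nabla\sqrt{\rho^n}$) and in the pressure $p(\rho^n)$ by the Sobolev embedding $H^1\hookrightarrow L^{2\gamma}$ available for the admissible range of $\gamma$. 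This yields the weak formulation and the generalized irrotationality for the limit $(\sqrt{\rho},\Lambda)$, completing the proof.
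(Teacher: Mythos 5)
Your proposal is correct and follows exactly the strategy the paper relies on for Theorem \ref{thm:QHD_old}: the paper does not reprove it but cites \cite{AM1, AM2} (see also Remark \ref{rmk:vdW}), where the proof is precisely the combination of global $H^1$ well-posedness of the NLS \eqref{eq:NLS_md} with the polar factorization of Lemma \ref{lemma:polar}, an $H^2$ approximation to justify the Madelung computations, and passage to the limit using the stability part of the lemma. Your sketch, including the treatment of the quadratic terms, the pressure via $H^1\hookrightarrow L^{2\gamma}$, and the generalized irrotationality identity, matches that argument.
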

	
	\begin{rem}\label{rmk:vdW}
		Let us remark that the proof of Theorem \ref{thm:QHD_old} relies on two facts, the polar factorization technique of Lemma \ref{lemma:polar} and the well-posedness result in the energy space for the associated NLS equation. This means that in general the result in Theorem \ref{thm:QHD_old} remains true as long as the associated NLS equation is globally well-posed and enjoys suitable stability properties. In particular this implies that it is also possible to study QHD systems with non-monotone pressure terms.
	\end{rem}
	
	We conclude this section by giving a discussion about the higher order functional introduced in \eqref{eq:higher_md} and a definition for the generalized chemical potential $\lambda$ involved, formally related to the chemical potential $\mu$ by $\lambda=\sqrt{\rho}\mu$.
	
	As mentioned in the Introduction, to obtain the stability property of weak solutions, we will exploit the a priori bounds inferred from the functional 
	\[
	I(t)=\int_{\R^d} \lambda^2+(\d_t\sqrt\rho)^2dx,
	\]
	where $\lambda\in L^\infty(0,T;L^2(\R^d))$ is implicitly given by
	\begin{equation}\label{eq:xi_def}
		\xi=\sqrt\rho\lambda=-\frac14\triangle\rho+\frac12|\nabla\sqrt\rho|^2+\frac12|\Lambda|^2+f'(\rho)\rho.
	\end{equation}
	The functional $I(t)$ has a direct interpretation for Schr\"odinger-generated solutions. Indeed let us assume for the moment that we have $\rho=|\psi|^2$ and $J=\IM(\bar\psi\nabla\psi)$, for some solutions $\psi$ to \eqref{eq:NLS_md}. Then we can see that the functional \eqref{eq:higher_md} actually equals
	\begin{equation*}
		I(t)=\int|\d_t\psi|^2\,dx.
	\end{equation*}
	Indeed by using the polar factorization we have 
	\[
	\d_t\sqrt\rho=\RE(\bar\phi\d_t\psi),\quad \lambda=-\IM(\bar\phi\d_t\psi),
	\]
	where $\phi\in P(\psi)$ is a polar factor. On the other hand, by invoking Lemma \ref{lemma:LL}, we see that for Schr\"odinger-generated functions, $\lambda$ vanishes almost everywhere on the vacuum region $\{\rho=0\}$. 
	Let us remark that in this case identity \eqref{eq:xi_def} is recovered from $\xi=\sqrt{\rho}\lambda=-\IM(\bar\psi\d_t\psi)$ and by  exploiting the fact that $\psi$ is a solution to \eqref{eq:NLS_md}.
	Following this intuition, for arbitrary hydrodynamic states, not necessarily Schr\"odinger-generated, the rigorous definition of $\lambda$ is given as the following.
	
	\begin{defn}\label{def:lambda}
		Let $(\sqrt\rho,\Lambda)\in H^1(\R^d)\times L^2(\R^d)$ be a hydrodynamic state such that $\rho$ is continuous, $\triangle\rho\in L^1_{loc}(\R^d)$ and $\Lambda=0$ a.e. on $\{\rho=0\}$. Then we define the generalized chemical potential (GCP) $\lambda$ to be measurable function given by
		\begin{equation}\label{eq:def_lambda}
			\lambda=\left\{\begin{array}{cc}
				-\frac12\triangle\sqrt{\rho}+\frac12\frac{|\Lambda|^2}{\sqrt{\rho}}+f'(\rho)\sqrt{\rho}&\textrm{in }\;\{\rho>0\}\\
				0&\textrm{elsewhere}
			\end{array}\right.
		\end{equation}
	\end{defn}
	
	Note however that in order to prove our stability result given in Theorem \ref{thm:stab_md} we will only need $\lambda$ to satisfy
	$\lambda\in L^\infty_tL^2_x$ and identity \eqref{eq:xi_def}.
	In fact we will identify the class of weak solutions with bounded generalized chemical potential, for which we can prove our stability result exactly by those two properties satisfied by $\lambda$.
	
	\begin{defn}\label{def:cptsln}
		Let $(\rho, J)$ be a finite energy weak solution to \eqref{eq:QHD_md} on 
		$[0,T)\times\R^d$. We say that the $(\rho, J)$ is a GCP solution for the system \eqref{eq:QHD_md} if and only if
		\begin{itemize}
			\item there exists $\lambda\in L^\infty(0, T;L^2(\R^d))$, such that
			\begin{equation*}
				\sqrt{\rho}\lambda=-\frac14\triangle\rho+e+p(\rho),
			\end{equation*}
			where the energy density $e$ is defined by
			\begin{equation*}
				e=\frac12|\nabla\sqrt{\rho}|^2+\frac12|\Lambda|^2+f(\rho);
			\end{equation*}
			\item the following uniform bounds are satisfied
			\begin{equation*}
				\begin{aligned}
					\|\sqrt{\rho}\|_{L^\infty(0, T; H^1(\R^d))}+\|\Lambda\|_{L^\infty(0, T; L^2(\R^d))}\leq& M_1,\\
					\|\d_t\sqrt{\rho}\|_{L^\infty(0, T; L^2(\R^d))}+\|\lambda\|_{L^\infty(0, T; L^2(\R^d))}\leq& M_2.
				\end{aligned}
			\end{equation*}
		\end{itemize}
	\end{defn}
	
	\section{Wave function lifting: hydrodynamic data with points vacuum}\label{sect:lift_2d}
	
	The main tool we use to prove the global existence results is the wave function lifting method. For given initial data with hydrodynamic state $(\sqrt\rho_0,\Lambda_0)$ with suitable assumptions, we show that it is possible to define a wave function $\psi_0$ associated to $(\sqrt\rho_0,\Lambda_0)$ in the sense of the Definition \ref{def:ass} below. The global existence of weak solutions to $\eqref{eq:QHD_md}$ is obtained by solving the NLS equation with initial data $\psi_0$ and then using the polar factorization and Theorem \ref{thm:QHD_old}.
	
	\begin{defn}\label{def:ass}
		Let $\Omega\subset\R^d$ be an arbitrary open set. Given the hydrodynamic state $(\sqrt\rho,\Lambda)\in H^1(\Omega)\times L^2(\Omega)$, we say that the wave function $\psi\in H^1(\Omega)$ is associated to $(\sqrt{\rho}, \Lambda)$ if
		\begin{equation}\label{eq:polar}
			\sqrt\rho=|\psi|\quad \textrm{and} \quad \Lambda=\IM(\bar\phi\nabla\psi),
		\end{equation}
		where $\phi\in P(\psi)$ is a polar factor of $\psi$. 
	\end{defn}
	
	This section mostly focuses on the two-dimensional case. We will assume that the mass density $\rho = \rho(x)$, $x\in\R^2$, is continuous and that the vacuum set $V=\{x;\rho(x)=0\}$ consists of isolated points, namely there exists an at most countable set $\mathcal{A}$ of indices, such that
	\begin{equation}
		V=\{x_{(\alpha)}\}_{\alpha\in\mathcal{A}}\subset\R^2,\quad \inf_{\alpha\ne \beta}|x_{(\alpha)}-x_{(\beta)}|>0.
	\end{equation}
	Moreover we also consider a quantization condition for the vorticity. More precisely, let $v=J/\rho$ be the velocity field, which is well-defined almost everywhere, then the quantised vorticity condition is stated as following: 
	\begin{equation}\label{eq:QV_3.3}
		\begin{cases}
			v\in \mathcal M(\R^2)\\
			\nabla\wedge v=2\pi \underset{\alpha\in\mathcal{A}}{\sum}k_\alpha\delta_{x_{(\alpha)}}, \hspace{0.3cm} k_\alpha\in\Z,
		\end{cases}
	\end{equation}
	where $D'(\R^2)$ is the space of distribution, and $\delta_{x_{(\alpha)}}$ is the Dirac-delta function supported at $x_{(\alpha)}$ for $\alpha\in\mathcal{A}$. This condition is connected to the Bohr-Sommerfeld quantization rule in the old quantum theory.
	A straightforward consequence of the wave function lifting established in Proposition \ref{prop:lift_2d} below is that the quantised vorticity condition \eqref{eq:QV_3.3} implies the generalized irrotationality condition
	\begin{equation}\label{eq:gic_wfl}
		\nabla\wedge J=2\nabla\sqrt{\rho}\wedge \Lambda, \quad\textrm{for a.e.}\;x\in\R^2.
	\end{equation}
	see Remark \ref{rmk:qv_gic} for more details.

	We always assume $(\rho, J) = ((\sqrt{\rho})^2, \sqrt{\rho}\Lambda)
	$ has finite mass $M(\rho)$ and finite energy $E(\rho,J)$, which is equivalent to 
	\begin{equation*}
		\|\sqrt\rho\|_{H^1(\R^2)}+\|\Lambda\|_{L^2(\R^2)}\le M_1.
	\end{equation*}
	On the other hand, in the case of pointwise vacuum to characterise the higher order regularity of the state, we assume the condition that $\triangle\rho,\ \nabla J\in L^1_{loc}(\R^2)$ and 
	\begin{equation*}
		\|\frac{\Lambda_j^2}{\sqrt{\rho}}-\d_{x_j}^2\sqrt{\rho}\|_{L^2(\R^2)}
		+\|\frac{\d_{x_j} J_j}{\sqrt{\rho}}\|_{L^2(\R^2)}\leq M_2,\quad j=1,2,
	\end{equation*}
	where the functions in the right hand side of the inequality are well-defined a.e. on $\R^2$.
	
	In what follows the notion of difference quotients (see \cite{Evans}), namely for any function $f\in L^1_{loc}(\R^2)$ in direction $h\in \R^2$, $|h|>0$, we define
	\begin{equation}\label{eq:dq}
		D^h f(x)=\frac{f(x+h)-f(x)}{|h|}.
	\end{equation}
	
	Now we are at the point to prove the wave function lifting proposition. We remark that the continuity of density $\rho$, which can not be inferred by the finite energy assumption and Sobolev embedding in multi-dimension, is essential to give a proper definition to the point vacuum, and it is also needed to provide a local lower bound on the density when strictly away from the vacuum. Nevertheless, in the physical and mathematical literature the most widely studied vortex structures fall in this framework, see for example \cite{BJS,OS,RG}.
	
	\begin{prop}\label{prop:lift_2d}
		Let $(\sqrt{\rho}, \Lambda)$ be a pair of finite energy hydrodynamic state satisfying the bounds 
		\begin{equation}\label{eq:C1-2}
			\|\sqrt\rho\|_{H^1(\R^2)}+\|\Lambda\|_{L^2(\R^2)}\le M_1.
		\end{equation}
		Let us further assume that $\sqrt{\rho}$ is continuous with isolated point vacuum 
		\begin{equation}\label{eq:vac}
			V=\{x_{(\alpha)}\}_{\alpha\in\mathcal{A}}\subset\R^2,\quad \inf_{\alpha\ne \beta}|x_{(\alpha)}-x_{(\beta)}|>0,
		\end{equation}
		and the velocity $v$ satisfies the quantised vorticity conditions
		\begin{equation}\label{eq:QV}
			\begin{cases}
				v\in \mathcal M(\R^2)\\
				\nabla\wedge v=2\pi \underset{\alpha\in\mathcal{A}}{\sum}k_\alpha\delta_{x_{(\alpha)}}, \hspace{0.3cm} k_\alpha\in\Z.
			\end{cases}
		\end{equation}
		Then there exists a wave function 
		$\psi\in H^1(\R^2)$ such that
		\begin{equation*}
			\sqrt{\rho}=|\psi|, \quad\Lambda=\IM(\bar\phi\nabla\psi),
		\end{equation*}
		where $\phi\in P(\psi)$ is a polar factor defined in \eqref{eq:set_pol}.
		
		If we furthermore assume that $\triangle\rho,\nabla J\in L^1_{loc}(\R^2)$ and $(\sqrt{\rho}, \Lambda)$ satisfy also the bounds 
		\begin{equation}\label{eq:C2-2}
			\|\frac{\Lambda_j^2}{\sqrt{\rho}}-\d_{x_j}^2\sqrt{\rho}\|_{L^2(\R^2)}
			+\|\frac{\d_{x_j}J_j}{\sqrt{\rho}}\|_{L^2(\R^2)}\leq M_2,\quad j=1,2,
		\end{equation}
		then $\psi\in H^2(\R^2)$ and we have
		\begin{equation*}
			\|\psi\|_{H^2(\R^2)}\leq C(M_1, M_2).
		\end{equation*}
	\end{prop}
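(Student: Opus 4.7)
The plan is to define $\psi=\sqrt{\rho}\,e^{iS}$ on the punctured plane $\Omega=\R^{2}\setminus V$ by integrating the velocity field along paths to produce the phase $S$, then extend $\psi$ by zero at the vortex points. Two obstructions must be addressed. First, $\Omega$ is not simply connected, so $S$ is only multivalued; this is resolved by the quantised vorticity condition \eqref{eq:QV}, which forces the periods of $v$ to be integer multiples of $2\pi$ and hence makes $e^{iS}$ single-valued on $\Omega$. Second, one must check that the extension across the isolated vacuum points preserves the claimed Sobolev regularity; this will rest on the fact that in $\R^{2}$ a single point has zero $H^{1}$-capacity.

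For the phase, on $\Omega$ the velocity $v=\Lambda/\sqrt{\rho}$ is locally in $L^{2}$, since $\sqrt{\rho}$, being continuous, is bounded away from zero on compact subsets of $\Omega$, and \eqref{eq:QV} implies $\nabla\wedge v=0$ as a distribution on $\Omega$. The Poincaré lemma on simply connected subsets yields local phases with $v=\nabla S$, and for any smooth closed loop $\gamma\subset\Omega$ the distributional Stokes theorem gives $\oint_{\gamma}v\cdot d\ell=2\pi\sum_{j}n_{j}(\gamma)\,k_{j}\in2\pi\Z$, where $n_{j}(\gamma)$ is the winding number of $\gamma$ around $x_{(j)}$. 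Hence, once a base point is fixed, $e^{iS(x)}=\exp\big(i\int_{x_{0}}^{x}v\cdot d\ell\big)$ is well-defined on $\Omega$. Setting $\psi=\sqrt{\rho}\,e^{iS}$ on $\Omega$ and $\psi(x_{(j)})=0$, the continuity of $\sqrt{\rho}$ together with its vanishing on $V$ yields $\psi\in\mathcal{C}(\R^{2})$.

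For the $H^{1}$ bound, on $\Omega$ one has $\nabla\psi=(\nabla\sqrt{\rho}+i\Lambda)e^{iS}$, so $|\nabla\psi|^{2}=|\nabla\sqrt{\rho}|^{2}+|\Lambda|^{2}\in L^{1}(\R^{2})$. To identify this pointwise gradient with the distributional gradient on all of $\R^{2}$, I would use a logarithmic cutoff $\chi_{\eps}$ which vanishes on $\eps$-neighbourhoods of $V$ and equals $1$ outside $\sqrt{\eps}$-neighbourhoods, with $\|\nabla\chi_{\eps}\|_{L^{2}}^{2}\lesssim 1/|\log\eps|\to 0$. Integrating by parts on $\Omega$ against $\chi_{\eps}\eta$ for $\eta\in\mathcal{C}^{\infty}_{c}(\R^{2})$ and passing to the limit, the cross term $\int\psi\,\nabla\chi_{\eps}\cdot\nabla\eta$ vanishes by Cauchy--Schwarz since $\psi$ is locally bounded, giving $\psi\in H^{1}(\R^{2})$ with the claimed bound. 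The identity $\Lambda=\IM(\bar{\phi}\nabla\psi)$ for any polar factor $\phi\in P(\psi)$ is then an immediate consequence of Lemma \ref{lemma:polar}.

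For the $H^{2}$ regularity under \eqref{eq:C2-2}, the direct chain-rule computation on $\Omega$ yields
\[
\d_{x_{j}}^{2}\psi=\Big[\d_{x_{j}}^{2}\sqrt{\rho}-\frac{\Lambda_{j}^{2}}{\sqrt{\rho}}+i\frac{\d_{x_{j}}J_{j}}{\sqrt{\rho}}\Big]e^{iS},
\]
so that $|\d_{x_{j}}^{2}\psi|^{2}=\big(\d_{x_{j}}^{2}\sqrt{\rho}-\Lambda_{j}^{2}/\sqrt{\rho}\big)^{2}+\big(\d_{x_{j}}J_{j}/\sqrt{\rho}\big)^{2}\in L^{1}(\R^{2})$ by \eqref{eq:C2-2}. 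I would then apply the difference-quotient characterisation of $H^{1}$ recalled before the proposition to $\nabla\psi$: on $\Omega$ the pointwise formula above supplies a uniform $L^{2}$ bound on $D^{h}\nabla\psi$, while the capacity cutoff $\chi_{\eps}$ absorbs the neighbourhoods of $V$; combined with $\psi\in L^{2}$ this yields $\psi\in H^{2}(\R^{2})$ with $\|\psi\|_{H^{2}}\leq C(M_{1},M_{2})$. The hard part is precisely this interface between the distributional vorticity concentrated on $V$ and the pointwise analysis on $\Omega$: both the single-valuedness of $e^{iS}$ and the removability of the singularities of $\nabla\psi$ across $V$ rely crucially on the two-dimensional capacity of a point together with the integer structure of the circulations, and these mechanisms genuinely use the hypotheses \eqref{eq:vac} and \eqref{eq:QV}.
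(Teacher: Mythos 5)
Your construction follows the same overall strategy as the paper: build a unimodular factor from the velocity field using the quantisation of circulation \eqref{eq:QV}, obtain $\nabla\psi=(\nabla\sqrt{\rho}+i\Lambda)\phi$ and $\d_{x_j}^2\psi=\bigl[(\d_{x_j}^2\sqrt{\rho}-\Lambda_j^2/\sqrt{\rho})+i\,\d_{x_j}J_j/\sqrt{\rho}\bigr]\phi$ away from the vacuum, and then transfer these identities across the isolated vacuum points. The differences are in the two technical devices, and both deserve a comment. First, the paper does not integrate $v$ directly along curves: since $v=\Lambda/\sqrt{\rho}$ is only $L^2_{loc}$ on $\R^2\setminus V$, neither the path integral defining $S$ nor the circulation $\oint_\gamma v\cdot d\ell$ is pointwise meaningful, so the paper mollifies, defines $\phi^\eps=\exp\bigl(i\int_{\gamma}v^\eps\cdot d\vec l\bigr)$ with $\nabla\wedge v^\eps=2k\pi\chi_\eps$, checks path-independence of the phase modulo $2\pi\Z$ by Stokes for the smooth field, and passes to the weak-$*$ limit. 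Your appeal to the local Poincar\'e lemma contains the right repair (single-valuedness of $e^{iS}$ via the monodromy of local $H^1$ primitives, whose periods are $2\pi k_j$ by \eqref{eq:QV}), but the ``distributional Stokes theorem on a loop'' step as written should be replaced by such an argument or by the mollification. Second, for the removability of $V$: the paper uses difference quotients, controlling the ball $B_{|h|}$ by the continuity of $\sqrt{\rho}$ in the $H^1$ step and, because $\nabla\psi$ is not bounded, introducing the truncation $f_{j,n}=\min\{|\d_{x_j}\psi|,n\}\phi_j$ in the $H^2$ step; your zero-capacity log-cutoff argument is a correct alternative and actually simplifies the $H^2$ step, since in the integration by parts against $\chi_\eps\eta$ the cross term is bounded by $\|\nabla\psi\|_{L^2(\mathrm{supp}\,\nabla\chi_\eps)}\,\|\eta\|_{L^\infty}\,\|\nabla\chi_\eps\|_{L^2}\to0$, so no truncation is needed at all. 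However, you should state that step as a pure cutoff identity for the second derivatives: the sentence that applies ``the difference-quotient characterisation to $\nabla\psi$'' while the ``cutoff absorbs the neighbourhoods of $V$'' mixes the two methods, and a uniform bound on $D^h\nabla\psi$ near $V$ is precisely what is not available there. Two small slips to fix: the cross term in your $H^1$ argument should be $\int\psi\,\eta\,\nabla\chi_\eps\,dx$ rather than $\int\psi\,\nabla\chi_\eps\cdot\nabla\eta\,dx$, and at the end one concludes $\psi\in H^2(\R^2)$ from $\psi\in H^1$ together with $\triangle\psi\in L^2$, exactly as in the paper.
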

	
	\begin{rem}\label{rmk:qv_gic}
		A straightforward consequence of the wave function lifting stated in the above Proposition is that the quantized vorticity condition \eqref{eq:QV_3.3} implies the generalized irrotationality condition \eqref{eq:gic_wfl}. More precisely, given a hydrodynamic state satisfying the assumptions of Proposition \ref{prop:lift_2d}, then we know there exists an associated wave function $\psi\in H^1(\R^2)$. On the other hand, from the polar factorization Lemma \ref{lemma:polar} we further have that the hydrodynamic state $(\sqrt{\rho}, \Lambda)$ satisfy the identity \eqref{eq:gic_pf}, where $J=\sqrt{\rho}\Lambda$.
	\end{rem}
	
	\begin{proof}
		We start with the simplest case, the vacuum is a single point, and without loss of generality we assume $V=\{0\}$. In this case the quantised vorticity condition becomes 
		\begin{equation}\label{eq:QV_0}
			\begin{cases}
				v\in \mathcal M(\R^2)\\
				\nabla\wedge v=2k\pi \delta_{0}, \hspace{0.3cm} k\in\Z.
			\end{cases}
		\end{equation}
		For isolated vacuum points, the proof follows essentially same idea.
		
		To construct a wave function we take a sequence of smooth mollifier $\{\chi_\epsilon\}$ such that $supp(\chi_\epsilon)\subset B_\epsilon(0)$, where $B_r(x)$ is the ball centred at $x$ with radius $r$. We define
		\[
		v^\epsilon(x)=v\ast \chi_\epsilon (x)=\langle v(\cdot),\chi_\epsilon(x-\cdot)\rangle,
		\]
		so that
		\begin{align*}
			\nabla\wedge v^\epsilon=(\nabla \wedge v)\ast \chi_\epsilon=2k\pi \delta_0\ast\chi_\epsilon=2k\pi \chi_\epsilon.
		\end{align*}
		Let us fix an arbitrary $x_0\notin B_\epsilon(0)$, we can take a piecewise smooth curve $\gamma(x)$ connecting $x_0$ and $x$, strictly away from $B_\epsilon(0)$. In this way, for any $x\notin B_\epsilon(0)$, the approximating polar factor $\phi^\epsilon$ can be defined as
		\begin{equation*}
			\phi^\epsilon(x)=\exp(i\int_{\gamma(x)}v^\epsilon(y)\cdot d\vec{l}(y))
		\end{equation*}
		and we extend the definition of $\phi^\epsilon$ continuously to $x\in B_\epsilon(0)$ by setting $\phi^\epsilon(x)=\phi^\epsilon(\epsilon x/|x|)$. 
		\newline
		We claim that the above definition of $\phi^\epsilon$ is independent of the choice of curve $\gamma$. Indeed, for any two curves $\gamma_1$ and $\gamma_2$ connecting $x_0$ and $x$, we can patch them into a piecewise smooth closed curve $\tilde{\gamma}$. Denote $\Omega$ the domain inside $\tilde{\gamma}$, then we have two cases: $B_\epsilon(0)\subset\Omega$ or $B_\epsilon(0)\cap\Omega=\O$. In either case by Stokes' theorem 
		\begin{align*}
			\int_{\gamma_1}v^\epsilon(y)\cdot d\vec{l}(y)-\int_{\gamma_2}v^\epsilon(y)\cdot d\vec{l}(y)&=\int_{\tilde{\gamma}}v^\epsilon(y)\cdot d\vec{l}(y)\\
			&=\int_\Omega \nabla\wedge v^\epsilon(y)dy\\
			&=2k\pi\ or\ 0,
		\end{align*}
		where $k$ is the integer given in \eqref{eq:QV_0}, thus we have 
		\[
		\exp(i\int_{\gamma_1}v^\epsilon(y)\cdot d\vec{l}(y))=\exp(i\int_{\gamma_2}v^\epsilon(y)\cdot d\vec{l}(y)).
		\]
		\newline
		The polar factors $\{\phi^\epsilon\}$ have uniform $L^\infty$ bound $1$, therefore up to subsequences we have $\phi^{\epsilon}\overset{\ast}{\rightharpoonup}\phi$ in $L^\infty(\R^2)$ with $\|\phi\|_{L^\infty}\leq 1$. The lifted wave function is defined by $\psi= \sqrt{\rho}\phi$. Obviously $\psi\in L^2(\R^2)$, then we are going to show $\psi\in H^1(\R^2)$.
		
		For this purpose, we only need to show the $L^2$ norm of the difference quotient $D^h\phi$ defined in \eqref{eq:dq} has a uniform bound, namely
		\[
		\|D^h\psi\|_{L^2(\R^2)}\le C
		\]
		for some constant $C$ and all $h\in \R^2$ with $|h|\ne 0$ small. We have
		\[
		\|D^h\psi\|_{L^2(\R^2)}=\|D^h\psi\|_{L^2(B_{|h|}(0))}+\|D^h\psi\|_{L^2(B_{|h|}(0)^c)}.
		\]
		For the part near the vacuum point $0$, we simply bound it by 
		\begin{align*}
			\|D^h\psi\|_{L^2(B_{|h|}(0))}&=\frac{1}{|h|}\|\psi(\cdot+h)-\psi(\cdot)\|_{L^2(B_{|h|}(0))}\\
			&\le \frac{2}{|h|}\|\sqrt{\rho}\|_{L^\infty(B_1(0))}|B_{|h|}(0)|^{\frac{1}{2}}=2\sqrt{\pi}\|\sqrt{\rho}\|_{L^\infty(B_1(0))},
		\end{align*}
		which is uniformly bounded since $\sqrt{\rho}$ is continuous.
		
		Then we consider the term away from the vacuum. For any relatively compact open set $U\subset B_{|h|}(0)^c$ and for any $\epsilon<|h|$, we have explicit representation
		\begin{align*}
			\phi^\epsilon&=\exp(i\int_{\gamma}v^\epsilon(y)\cdot d\vec{l}(y)),\\
			\nabla\phi^\epsilon&=i v^\epsilon \phi^\epsilon.
		\end{align*}
		Since $\sqrt{\rho}$ is continuous and $U$ is strictly away from the vacuum, then $\underset{x\in U}{\inf}\{\sqrt{\rho}(x)\}=\alpha>0$ and the velocity field $v=\Lambda/\sqrt{\rho}\in L^2(U)$. Moreover we have $v^\epsilon\to v$ in $L^2(U)$, and $\phi=*-\underset{\eps\to0}{\lim}\phi^\eps$,  then it follows
		\begin{equation}\label{eq:gradphi}
			\nabla\phi^\epsilon=i v^\epsilon \phi^\epsilon \rightharpoonup iv\phi=\nabla\phi\hspace{0.3cm}in\ L^2(U).
		\end{equation}
		Thus we obtain that in $L^2(U)$
		\begin{align*}
			\nabla\psi=(\nabla\sqrt{\rho}+i\sqrt{\rho}v)\phi=(\nabla\sqrt{\rho}+i\Lambda)\phi
		\end{align*}
		with uniform $L^2$ bound 
		\[
		\|\nabla\psi\|_{L^2(U)}=\|\nabla\sqrt{\rho}\|_{L^2(U)}+\|\Lambda\|_{L^2(U)}\le M_1.
		\]
		By using a sequence of relatively compact sets $U$ invading $B^c$, we get
		\[
		\|\nabla\psi\|_{L^2(B_h(0)^c)}\le M_1,
		\]
		and we conclude $\psi\in H^1(\R^2)$. Furthermore, the above argument implies for a.e. $x\in R^2$ we have
		\begin{equation}\label{eq:gradpsi}
			\nabla\psi=(\nabla\sqrt{\rho}+i\Lambda)\phi,
		\end{equation}
		which proves the polar factorization \eqref{eq:polar}.
		
		Further let us assume the condition \eqref{eq:C2-2}, then we are able to show that $\psi\in H^2(\R^2)$. The proof uses difference quotients as in the previous case, but here it requires further small technicalities since $\nabla\psi\notin L^\infty$.
		
		Let us consider a relatively compact set $U$, strictly away from the vacuum point $0$. By \eqref{eq:gradphi} and \eqref{eq:gradpsi} it follows that for $j=1,2,$
		\begin{align*}
			[(\d_{x_j}^2\sqrt{\rho}-\frac{\Lambda^2_j}{\sqrt{\rho}})+i\frac{\d_{x_j} J_j}{\sqrt{\rho}}]\phi&=(\d_{x_j}^2\sqrt{\rho}+i\d_{x_j}\Lambda_j+i\d_{x_j}\sqrt{\rho}\, v_j-\Lambda_j\,v_j)\phi\\
			&=\d_{x_j}\left[(\d_{x_j}\sqrt\rho+\Lambda_j)\phi\right]=\d_{x_j}(\d_{x_j}\psi)
		\end{align*}
		in the sense of distribution. Therefore the condition \eqref{eq:C2-2} implies $\d_{x_j}^2\psi\in L^2(U)$ satisfying the uniform bound 
		\begin{equation}\label{eq:dxj2psi}
			\|\d_{x_j}^2\psi\|_{L^2(U)}\leq M_2.
		\end{equation}
		Now we again take the smooth mollifier $\{\chi_\epsilon\}$ such that $supp(\chi_\epsilon)\subset B_\eps(0)$, and we define the smooth approximating wave function $\psi^\eps=\psi\ast\chi_\eps$. It follows \eqref{eq:dxj2psi} that for any relatively compact open set $U$ such that $dist(U,0)>\eps$, we have
		\begin{equation}\label{eq:dxj2psieps}
			\|\d_{x_j}^2\psi^\eps\|_{L^2(U)}\leq\|\d_{x_j}^2\psi\|_{L^2(U^\eps)}\leq M_2,
		\end{equation}
		where $U^\eps$ is the $\eps-$neighbourhood of the set $U$. For $\partial_{x_j}\psi^\eps$, $j=1,2$, by using the polar factorization we  can decompose it as $\partial_{x_j}\psi^\eps=|\partial_{x_j}\psi^\eps|\phi_j^\eps$, where the polar factor $\phi_j^\eps=\partial_{x_j}\psi^\eps/|\partial_{x_j}\psi^\eps|$ when $|\partial_{x_j}\psi^\eps|\ne 0$. In this case direct computation shows 
		\begin{equation}\label{eq:gradphik}
			\d_{x_j}\phi_j^\eps=i\frac{\IM(\partial_{x_j}\overline{\psi^\eps}\d_{x_j}^2\psi^\eps)}{|\partial_{x_j}\psi^\eps|^2}\phi_j^\eps,
		\end{equation}
		and by the same argument as polar factorization we have 
		\begin{equation}\label{eq:hesspsi}
			|\d_{x_j}^2\psi^\eps|^2=\left(\d_{x_j}|\partial_{x_j}\psi^\eps|\right)^2+\left(\frac{\IM(\partial_{x_j}\overline{\psi^\eps}\d_{x_j}^2\psi^\eps)}{|\partial_{x_j}\psi^\eps|}\right)^2.
		\end{equation}
		
		To apply the theory of difference quotients, instead of $\d_{x_j}\psi^\eps$ and $\d_{x_j}\psi$, we will consider the $L^\infty$ cut-off functions $\{f_{j,n}^\eps\}$ and $f_{j,n}$ defined by
		\begin{align*}
			f_{j,n}^\eps=&\min\{|\partial_{x_j}\psi^\eps|,n\}\phi_j^\eps,\quad j=1,2,\\
			f_{j,n}=&\min\{|\partial_{x_j}\psi|,n\}\phi_j,\quad \phi_j\in P(\d_{x_j}\psi).
		\end{align*}
		By using the fact
		\[
		|f_{j,n}^\eps-f_{j,n}|\leq |\d_{x_j}\psi^\eps-\d_{x_j}\psi|,
		\]
		and $\d_{x_j}\psi^\eps\to \d_{x_j}\psi$ in $L^2(\R^2)$, it follows $f_{j,n}^\eps\to f_{j,n}$ in $L^2(\R^2)$ as $\eps\to 0$. 
		
		Now we consider the difference quotient of $f_{j,n}$ in the direction $e_j$, where 
		\[
		e_1=(1,0)\quad and \quad e_2=(0,1),
		\]
		and we let $h_j=|h|\,e_j$ for $|h|\ne 0$ small, and we have
		\[
		\|D^{h_j} f_{j,n}\|_{L^2(\R^2)}=\|D^{h_j} f_{j,n}\|_{L^2(B_{|h|}(0))}+\|D^{h_j} f_{j,n}\|_{L^2(B_{|h|}(0)^c)}.
		\]
		For the part near the vacuum, it can be simply controlled by 
		\[
		\|D^{h_j} f_{j,n}\|_{L^2(B_{|h|}(0))}\le 2\sqrt{\pi}\|f_{j,n}\|_{L^\infty}\le 2\sqrt{\pi}n.
		\]
		To deal with the part away from the vacuum, we again consider a relatively compact open sets $U\subset B_{|h|}(0)^c$. By definition $|f_{j,n}^\eps|\le |\partial_{x_j}\psi^\eps|$, and since $\d_{x_j}^2\psi^\eps\in L^2(U)$ for all $\eps<|h|$, the $x_j-$derivative of the truncated function $f_{j,n}^\eps$ also belongs to $L^2(U)$. Moreover by using \eqref{eq:gradphik} and \eqref{eq:hesspsi}, for $|\partial_j\psi^\eps|> n$ we have
		\begin{align*}
			|\d_{x_j} f_{j,n}^\eps|^2=|\d_{x_j} (n\phi_j^\eps)|^2=\left(n\frac{\IM(\partial_{x_j}\overline{\psi^\eps}\partial_{x_j}^2\psi^\eps)}{|\partial_{x_j}\psi^\eps|^2}\right)^2
			\leq  \left(\frac{\IM(\partial_{x_j}\overline{\psi^\eps}\partial_{x_j}^2\psi^\eps)}{|\partial_{x_j}\psi^\eps|}\right)^2\leq|\d_{x_j}^2 \psi^\eps|^2,
		\end{align*}
		and $\d_{x_j} f_{j,n}^\eps=\d_{x_j}^2\psi^\eps$ in the case $|\d_{x_j}\psi^\eps|\leq n$. Therefore by using \eqref{eq:dxj2psieps}, we have the uniform bound 
		\begin{equation*}
			\|\d_{x_j} f_{j,n}^\eps\|_{L^2(U)}\leq \|\d_{x_j}^2\psi^\eps\|_{L^2(U)}\leq M_2.
		\end{equation*}
		By passing to the limit as $\eps\to 0$, we obtain
		\begin{equation}\label{eq:dxjfjn}
			\|\d_{x_j} f_{j,n}\|_{L^2(U)}\leq M_2
		\end{equation}
		for any relatively compact open set $U$ strictly away from $0$, hence it also works on $B_{|h|}(0)^c$. Therefore the theory of difference quotient shows $\d_{x_j}f_{j,n}\in L^2(\R^2)$ and we have the bound 
		\begin{equation*}
			\|\d_{x_j} f_{j,n}\|_{L^2(\R^2)}\leq M_2+2\sqrt\pi n.
		\end{equation*}
		
		Notice that even though the bound of $\|\d_{x_j} f_{j,n}\|_{L^2}$ obtained above depends on $n$, as long as we have $\d_{x_j}f_{j,n}\in L^2(\R^2)$, a zero measure set has no effect on the $L^2$ norm of $\d_{x_j}f_{j,n}$. Thus by choosing a sequence of open set $U$ invading $\R^2-\{0\}$ in \eqref{eq:dxjfjn}, we obtain that
		\[
		\|\d_{x_j} f_{j,n}\|_{L^2(\R^2)}\le M_2,\quad j=1,2.
		\]
		On the other hand, it is straightforward to see that $f_{j,n}$ converges to $\d_{x_j}\psi$ strongly as $n\to \infty$ by construction. Therefore we conclude $\d_{x_j}^2\psi\in L^2(\R^2)$ for $j=1,2$, with the bound 
		\[
		\|\d_{x_j}^2\psi\|_{L^2(\R^2)}\le M_2.
		\]
		It implies $\triangle \psi\in L^2(\R^2)$ and we have
		\[
		\|\triangle \psi\|_{L^2(\R^2)} \le 2\,M_2,
		\]
		then by the standard argument we can conclude $\psi\in H^2(\R^2)$ satisfying the bound
		\[
		\|\psi\|_{H^2(\R^2)}\leq C(M_1,M_2).
		\]
	\end{proof}
	
	The next proposition is a simplified analogue of Proposition \ref{prop:lift_2d} which will be used later. We consider hydrodynamic state $(\sqrt\rho,\Lambda)$ such that $\rho$ is continuous with $\rho>0$, and let us assume the data is irrotational in the sense of 
	\begin{equation}\label{eq:irr}
		\nabla\wedge J=2\nabla\sqrt{\rho}\wedge \Lambda.
	\end{equation}
	By the Remark \ref{rmk:vort} we see that when the density $\rho$ is strictly positive, identity \eqref{eq:irr} is equivalent to the classical irrotationality condition $\nabla\wedge v=0$. For such hydrodynamic state with the regularity assumptions \eqref{eq:C1-2} and \eqref{eq:C2-2}, we can lift it to a wave function $\psi\in H^2(\R^d)$ following the same approach as in Proposition \ref{prop:lift_2d}. Moreover the same argument also applies to general multi-dimensional data. 
	
	\begin{prop}\label{prop:lift_pos}
		Let $(\sqrt{\rho}, \Lambda)$ be a finite energy hydrodynamic state on $\R^d$, such that $\sqrt{\rho}$ is continuous with $\rho>0$ and it satisfies the bounds 
		\begin{equation}\label{eq:C1-2_pos}
			\|\sqrt\rho\|_{H^1(\R^d)}+\|\Lambda\|_{L^2(\R^d)}\le M_1.
		\end{equation}
		and 
		\begin{equation}\label{eq:C2-2_pos}
			\|\frac{\Lambda^2}{\sqrt{\rho}}-\triangle\sqrt{\rho}\|_{L^2(\R^d)}
			+\|\frac{\diver J}{\sqrt{\rho}}\|_{L^2(\R^d)}\leq M_2,
		\end{equation}
		Let us further assume $(\rho,J)$ satisfies the generalized irrotationality condition \eqref{eq:irr}. Then there exists a wave function 
		$\psi\in H^2(\R^d)$, such that
		\begin{equation*}
			\sqrt{\rho}=|\psi|, \quad\Lambda=\IM(\bar\phi\nabla\psi),
		\end{equation*}
		where $\phi\in P(\psi)$ is a polar factor, and we have controls
		\begin{equation*}
			\|\psi\|_{H^2(\R^d)}\leq C(M_1,M_2).
		\end{equation*}
	\end{prop}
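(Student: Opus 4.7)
The strategy mirrors the proof of Proposition \ref{prop:lift_2d} but is substantially simpler because the hypothesis $\rho>0$ eliminates the vacuum set, and hence the need to handle vortex singularities by cutoffs or difference quotients near isolated points. The plan is to first produce a global phase $\phi=e^{iS}$ with $\nabla S=v:=\Lambda/\sqrt{\rho}$, then set $\psi=\sqrt{\rho}\,\phi$, and finally verify $\psi\in H^2$ by computing $\Delta\psi$ in closed form.

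First I would observe that, since $\sqrt{\rho}$ is continuous and strictly positive, on any compact set $K\subset\R^d$ one has $\inf_K\sqrt{\rho}\geq c_K>0$, so $v=\Lambda/\sqrt{\rho}\in L^2_{\mathrm{loc}}(\R^d)$. By Remark \ref{rmk:vort}, the generalised irrotationality condition \eqref{eq:irr} reduces to $\nabla\wedge v=0$ in $D'(\R^d)$. Since $\R^d$ is simply connected, I can then construct the phase exactly as in Proposition \ref{prop:lift_2d}: mollify $v^{\eps}=v\ast\chi_{\eps}$, which is smooth and still curl-free, fix a base point $x_0$, define $\phi^{\eps}(x)=\exp\bigl(i\int_{\gamma(x)}v^{\eps}\cdot d\vec{\,l}\,\bigr)$ along any piecewise smooth curve from $x_0$ to $x$; Stokes' theorem combined with $\nabla\wedge v^{\eps}=0$ shows the definition is path-independent. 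Extracting a weak-$\ast$ limit $\phi^{\eps}\overset{\ast}{\rightharpoonup}\phi$ in $L^{\infty}(\R^d)$ and using $v^{\eps}\to v$ in $L^2_{\mathrm{loc}}$, I pass to the limit in $\nabla\phi^{\eps}=iv^{\eps}\phi^{\eps}$ to obtain $\nabla\phi=iv\phi$ in $L^2_{\mathrm{loc}}$. Setting $\psi:=\sqrt{\rho}\,\phi$ yields $|\psi|=\sqrt{\rho}$ and, by construction, $\phi\in P(\psi)$.

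For the $H^1$ bound, Leibniz together with $\nabla\phi=iv\phi$ and $\sqrt{\rho}v=\Lambda$ gives, almost everywhere,
\begin{equation*}
\nabla\psi=(\nabla\sqrt{\rho}+i\Lambda)\phi,\qquad |\nabla\psi|^2=|\nabla\sqrt{\rho}|^2+|\Lambda|^2,
\end{equation*}
so $\|\psi\|_{H^1}\leq C(M_1)$ directly from \eqref{eq:C1-2_pos}. For the $H^2$ bound I compute the Laplacian of $\psi$ by taking one further divergence,
\begin{equation*}
\Delta\psi=\phi\Bigl[\,\Delta\sqrt{\rho}+i\diver\Lambda\,\Bigr]+(\nabla\sqrt{\rho}+i\Lambda)\cdot(iv\phi),
\end{equation*}
which after using $v\cdot\Lambda=|\Lambda|^2/\sqrt{\rho}$ and $\diver\Lambda+v\cdot\nabla\sqrt{\rho}=\diver J/\sqrt{\rho}$ collapses to
\begin{equation*}
\Delta\psi=\phi\left[\Bigl(\Delta\sqrt{\rho}-\frac{|\Lambda|^2}{\sqrt{\rho}}\Bigr)+i\,\frac{\diver J}{\sqrt{\rho}}\right].
\end{equation*}
Since $|\phi|\leq 1$, the hypothesis \eqref{eq:C2-2_pos} yields $\|\Delta\psi\|_{L^2(\R^d)}\leq M_2$. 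Combined with $\psi\in H^1$ obtained above, the standard identity $\|\psi\|_{H^2}^2\sim\|\psi\|_{L^2}^2+\|\Delta\psi\|_{L^2}^2+\|\nabla\psi\|_{L^2}^2$ (Fourier/Bessel potential characterisation of $H^2$) gives $\psi\in H^2(\R^d)$ with the stated bound $\|\psi\|_{H^2(\R^d)}\leq M_2+C(M_1)$.

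The only subtle point I anticipate is the rigorous passage $\nabla\phi^{\eps}\rightharpoonup iv\phi$; however this is exactly the step carried out on compact subsets $U$ away from vacuum in Proposition \ref{prop:lift_2d}, and here it applies globally because every compact set is already ``away from vacuum''. This removes the need for the $L^\infty$ truncations $f_{j,n}$ and the componentwise difference-quotient arguments that dominated that proof, so the argument reduces to the two displayed identities for $\nabla\psi$ and $\Delta\psi$ together with a weak-$\ast$ compactness extraction.
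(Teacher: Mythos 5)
Your proposal is correct and follows essentially the same route as the paper's proof: mollify the velocity field, build the approximate polar factor by a (path-independent) line integral, pass to the weak-$\ast$ limit to get $\nabla\phi=iv\phi$, and then read off the closed formulas for $\nabla\psi$ and $\Delta\psi$ to obtain the $H^1$ and $H^2$ bounds from \eqref{eq:C1-2_pos} and \eqref{eq:C2-2_pos}. The only cosmetic difference is that the paper integrates along straight segments from the origin while you argue path-independence via Stokes, which amounts to the same use of $\nabla\wedge v=0$.
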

	
	\begin{proof}
		The proof is analogue to the proof to Proposition \ref{prop:lift_2d}, but the argument becomes simpler due to the absence of vacuum. 
		
		Since $\sqrt{\rho}$ is continuous in space and strictly positive, it has a positive lower bound on any compact set in $\R^d$. Therefore the velocity field $v=\Lambda/\sqrt{\rho}$ belongs to $L^2_{loc}(\R^d)$. Let $\{\chi_\epsilon\}$, $\epsilon>0$ be a sequence of smooth mollifier, we define the approximating velocity 
		\[
		v^\epsilon=v\ast \chi_\epsilon,
		\]
		and the approximating polar factor 
		\[
		\phi^\epsilon(x)=\exp(i\int_{\gamma(x)}v^\epsilon(y)\cdot d\vec{l}(y)),
		\]
		where $\gamma(x)$ is the straight line in $\R^d$ connecting $0$ and $x$. By using the Remark \ref{rmk:vort}, the identity \eqref{eq:irr} is equivalent to the classical irrotationality condition
		\[
		\nabla \wedge v=0,
		\]
		when then density $\rho$ is always positive. Hence the approximating velocity field $v^\epsilon$ is also irrotational, and the polar factor $\phi^\epsilon$ is well-defined. Moreover it is straightforward to compute that 
		\[
		\nabla\phi^\epsilon=iv^\epsilon\phi^\epsilon.
		\]
		The uniform $L^\infty$ bound of polar factors $\{\phi^\epsilon\}$ allows us to pass to a weak limit upto subsequences, namely $\phi^{\epsilon}\overset{\ast}{\rightharpoonup}\phi$ in $L^\infty(\R^2)$ with $\|\phi\|_{L^\infty}\leq 1$. Therefore by using $v^\epsilon\to v$ in $L^2_{loc}(\R^d)$ as $\epsilon\to 0$, it follows
		\begin{equation}\label{eq:dphi_md}
			\nabla\phi^\epsilon=i\,v^\epsilon\phi^\epsilon\rightharpoonup iv\phi=\nabla\phi\quad in\;L^2_{loc}(\R^d).
		\end{equation}
		
		If we define $\psi=\sqrt{\rho}\phi$, then we can directly check that $\psi$ is the wave function associated to $(\sqrt\rho,\Lambda)$ and $\psi\in H^2(\R^2)$. Clearly $|\psi|=\sqrt{\rho}$. By identity \eqref{eq:dphi_md} we can compute 
		\begin{equation}\label{eq:111}
			\nabla\psi=(\nabla\sqrt\rho+i\sqrt\rho v)\phi=(\nabla\sqrt\rho+i\Lambda)\phi,
		\end{equation}
		hence $\Lambda=\IM(\bar\phi\nabla\psi)$ and 
		\[
		\|\psi\|_{H^1(\R^2)}\le \|\sqrt{\rho}\|_{H^1(\R^2)}+\|\Lambda\|_{L^2(\R^2)}\le M_1.
		\]
		Moreover by \eqref{eq:dphi_md} and \eqref{eq:111},
		\begin{align*}
			\triangle\psi&=\diver\nabla\psi\\
			&=(\triangle\sqrt{\rho}+i\diver\Lambda+i\nabla\sqrt{\rho}\cdot v-\Lambda\cdot v)\phi\\
			&=\left[(\triangle\sqrt{\rho}-\frac{|\Lambda|^2}{\sqrt{\rho}})+i\frac{\diver J}{\sqrt{\rho}}\right]\phi.
		\end{align*}
		Using \eqref{eq:C1-2_pos} we obtain that 
		\[
		\|\triangle \psi\|_{L^2(\R^2)}\leq \|\frac{\Lambda^2}{\sqrt{\rho}}-\triangle\sqrt{\rho}\|_{L^2(\R^2)}
		+\|\frac{\diver J}{\sqrt{\rho}}\|_{L^2(\R^2)}\leq M_2.
		\]
	\end{proof}
	
	To conclude this section we prove Theorem \ref{thm:glob_md} and Theorem \ref{thm:glob2_md}, namely the global existence theorem for the Cauchy problem of the QHD system \eqref{eq:QHD_md} in 2D. We assume initial data to be satisfying the point vacuum and quantised vorticity condition. The proof is obtained by combining Proposition \ref{prop:lift_s}, the global well-posedness of NLS equations (Theorem \ref{thm:NLS}) and the polar factorization method (Theorem \ref{thm:QHD_old}).
	
	\begin{proof}[Proof of Theorem \ref{thm:glob_md} and Theorem \ref{thm:glob2_md}]
		Let us consider initial data $(\rho_0, J_0)$ satisfying \eqref{eq:C1_md_intro} and the conditions \eqref{eq:vac_intro} and \eqref{eq:QV_intro}, then by Proposition \ref{prop:lift_2d} there exists a wave function $\psi_0\in H^1$ associated to $(\sqrt{\rho_0}, \Lambda_0)$. By using Theorem \ref{thm:NLS} and Theorem \ref{thm:QHD_old}, we obtain a global in time finite energy weak solution $(\sqrt{\rho},\Lambda)$ to \eqref{eq:QHD_md}, which conserves the energy. 
		
		If $(\sqrt{\rho_0}, \Lambda_0)$ satisfies both \eqref{eq:C1_md_intro} and \eqref{eq:C2_md_intro}, then Proposition \ref{prop:lift_2d} shows the wave function $\psi_0$ belongs to $H^2(\R^2)$. 
		By Theorem \ref{thm:NLS} for the NLS, we also have $\psi\in\mathcal C([0,T];H^2(\R^2))\cap\mathcal C^1([0,T];L^2(\R^2))$ with
		\begin{equation}\label{eq:H2_md}
			\|\psi\|_{L^\infty(0, T;H^2(\R^2))}+\|\d_t\psi\|_{L^\infty(0, T; L^2(\R^2))}\leq C(T,M_1,M_2).
		\end{equation}
		Now, let us recall that $\d_t\sqrt{\rho}=\RE(\bar\phi\d_t\psi)$ and $\lambda=-\IM(\bar\phi\d_t\psi)$, with $\phi\in P(\psi)$, then by the polar factorization and \eqref{eq:H2_md} we infer
		\begin{equation*}
			\|\d_t\sqrt{\rho}\|_{L^\infty_tL^2_x}+\|\lambda\|_{L^\infty_tL^2_x}\leq C(T, M_1, M_2).
		\end{equation*}
		Moreover using the NLS equation \eqref{eq:NLS_md} we can directly compute that
		\begin{equation*}
			\sqrt{\rho}\lambda=-\IM(\bar\psi\d_t\psi)=-\frac14\triangle\rho+e+p(\rho)
		\end{equation*}
		so that \eqref{eq:rho_mu_s} holds.
		To conclude the proof of Theorem \ref{thm:glob2_md} it only remains to prove the bounds on the higher order derivatives. 
		Since $\triangle\rho=2\RE(\bar\psi\triangle\psi)+2|\nabla\psi|^2$, then by H\"older's inequality and Sobolev embedding we have
		\begin{equation*}
			\|\triangle\rho\|_{L^\infty(0, T;L^2(\R^2))}\lesssim\|\psi\|^2_{L^\infty(0, T;H^2(\R^2))}\leq C(T, M_1, M_2).
		\end{equation*}
		On the other hand, by the Madelung transformation
		\begin{equation*}
			\|\nabla J\|_{L^\infty_tL^2_x}=\|\nabla\IM(\bar\psi\nabla\psi)\|_{L^\infty_tL^2_x}\leq\|\psi\|_{L^\infty_tH^2_x}^2\leq C(T,M_1,M_2).
		\end{equation*}
		Finally, let us recall that by the polar factorization we have 
		\[
		e=\frac12|\nabla\sqrt{\rho}|^2+\frac12|\Lambda|^2+f(\rho)=\frac12|\nabla\psi|^2+f(|\psi|^2).
		\]
		It is well-known that 
		$|\nabla|\nabla\psi||\leq|\nabla^2\psi|$ a.e. $x\in\R^2$, hence
		\begin{equation*}
			\|\nabla\sqrt{e}\|_{L^\infty_tL^2_x}\leq\|\psi\|_{L^\infty_tH^2_x}\leq C(T, M_1, M_2).
		\end{equation*}
	\end{proof}
	
	\subsection{Extension to 3D hydrodynamic data: straight vortex lines}
	Here we extend the wave function lifting argument to a simple example of 3D hydrodynamic data, where we assume planar symmetry to the data and the vorticity to be concentrated in a finite number of vortex lines. This is a typical example of the vortex structure of quantum flows in 3D space \cite{P}. More precisely, we consider hydrodynamic data $(\rho(x),J(x))$, $x=(x_1,x_2,x_3)\in\R^3$ and $J\in\R^3$, of the following form: 
	\begin{equation}\label{eq:cyldata}
		\rho(x)=\rho_1(x_1,x_2)\rho_2(x_3)\quad and \quad J(x)=\left(J_1(x_1,x_2)\rho_2(x_3),0\right)=\sqrt{\rho}\Lambda,
	\end{equation}
	where $\rho,\rho_j\in\R$ and $J_1\in\R^2$. The data $(\rho,J)$ is essentially a product of a 2D data $(\rho_1,J_1)$ and a 1D data $(\rho_2,0)$. We assume $(\rho_1, J_1) = ((\sqrt{\rho_1})^2, \sqrt{\rho_1}\Lambda_1)$ has pointwise vacuum and quantised vorticity, namely we assume $\rho_1$ to be continuous with vacuum structure \eqref{eq:vac}, and the velocity field $v_1=J_1/\rho_1$ to satisfy the quantized vorticity condition \eqref{eq:QV}. The regularity of $(\rho_1,J_1)$ is characterised by the bounds \eqref{eq:C1-2} and \eqref{eq:C2-2}, and we assume $\sqrt{\rho_2}\in H^s(\R)$ with $s=1$ or $2$.
	
	With the previous assumptions, we can state the following proposition as an extension of the wave function lifting argument to 3D data.
	Moreover, by using the next proposition, we can extend the global existence results to the case of 3D initial data of the form (3.18), namely Theorem \ref{thm:glob_3d} and Theorem \ref{thm:glob2_3d}.
	
	\begin{prop}\label{prop:lift_3D}
		Let $(\rho, J)$ be a pair of hydrodynamic data as in \eqref{eq:cyldata}, with $(\rho_1,J_1)$ satisfying the condition \eqref{eq:vac} and \eqref{eq:QV}. Furthermore let us assume that $(\rho_1,J_1)$ satisfy the mass and energy bounds \eqref{eq:C1-2} and $\sqrt{\rho_2}\in H^1(\R)$. Then there exists a wave function 
		$\psi\in H^1(\R^3)$ such that
		\begin{equation*}
			\sqrt{\rho}=|\psi|, \quad\Lambda=\IM(\bar\phi\nabla\psi),
		\end{equation*}
		where $\phi\in P(\psi)$ is a polar factor defined in \eqref{eq:set_pol}.
		
		If we furthermore assume the bounds \eqref{eq:C2-2} on $(\rho_1,J_1)$ and $\sqrt{\rho_2}\in H^2(\R)$, then $\psi\in H^2(\R^3)$ and 
		\begin{equation*}
			\|\psi\|_{H^2(\R^3)}\leq C(M_1, M_2).
		\end{equation*}
	\end{prop}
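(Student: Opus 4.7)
The plan is to construct $\psi$ by exploiting the product structure of the data: set $\psi(x)=\psi_1(x_1,x_2)\,\psi_2(x_3)$, where $\psi_1\in H^1(\R^2)$ is the wave function obtained by applying Proposition \ref{prop:lift_2d} to the two-dimensional data $(\sqrt{\rho_1},\Lambda_1)$, and $\psi_2=\sqrt{\rho_2}\in H^1(\R)$ is simply the nonnegative square root of the one-dimensional mass density. Since the third component of the momentum vanishes identically, no phase lifting in the $x_3$ direction is needed, and this tensor ansatz is the natural candidate.

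First I would verify the algebraic identities. The relation $|\psi|^2=|\psi_1|^2|\psi_2|^2=\rho_1\rho_2=\rho$ is immediate. As a polar factor of $\psi$ I would take $\phi(x)=\phi_1(x_1,x_2)$, where $\phi_1\in P(\psi_1)$ comes from Proposition \ref{prop:lift_2d}; since $\psi_2\geq 0$, the identity $\sqrt{\rho}\phi=\psi$ and the bound $\|\phi\|_{L^\infty}\leq 1$ transfer directly from $\phi_1$. The momentum identity $\Lambda=\IM(\bar\phi\,\nabla\psi)$ then has to be checked componentwise. For $j=1,2$ a direct computation gives
\begin{equation*}
\IM(\bar\phi_1(\d_{x_j}\psi_1)\psi_2)=\psi_2\,\IM(\bar\phi_1\d_{x_j}\psi_1)=\sqrt{\rho_2}\,\Lambda_{1,j}=\Lambda_j.
\end{equation*}
For $j=3$ we must check $\IM(\bar\phi_1\,\psi_1\,\d_{x_3}\sqrt{\rho_2})=0$: on $\{\rho_1>0\}$ we have $|\phi_1|=1$, so $\bar\phi_1\psi_1=\sqrt{\rho_1}$ is real, while on $\{\rho_1=0\}$ the factor $\psi_1$ itself vanishes, so the imaginary part is zero in either case, matching $\Lambda_3=0$.

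Second, I would estimate the Sobolev norms of $\psi$ by expanding via Leibniz. The derivatives $\d_{x_j}\psi=(\d_{x_j}\psi_1)\psi_2$ for $j=1,2$ and $\d_{x_3}\psi=\psi_1\,\d_{x_3}\sqrt{\rho_2}$ separate as tensor products, and Fubini gives
\begin{equation*}
\|\psi\|_{H^1(\R^3)}\leq C\,\|\psi_1\|_{H^1(\R^2)}\,\|\sqrt{\rho_2}\|_{H^1(\R)}\leq C(M_1).
\end{equation*}
Under the stronger hypotheses \eqref{eq:C2-2} on $(\rho_1,J_1)$ and $\sqrt{\rho_2}\in H^2(\R)$, Proposition \ref{prop:lift_2d} upgrades $\psi_1$ to $H^2(\R^2)$ with $\|\psi_1\|_{H^2(\R^2)}\leq C(M_1,M_2)$. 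The pure second derivatives $\d_{x_j}^2\psi=(\d_{x_j}^2\psi_1)\psi_2$, $\d_{x_3}^2\psi=\psi_1\,\d_{x_3}^2\sqrt{\rho_2}$, and the mixed ones $\d^2_{x_jx_3}\psi=(\d_{x_j}\psi_1)(\d_{x_3}\sqrt{\rho_2})$ all tensorize and each produces an $L^2(\R^3)$ bound controlled by the product of the $L^2(\R^2)$ norm of a second-order (or first-order) quantity of $\psi_1$ and the $L^2(\R)$ norm of a second-order (or first-order) quantity of $\sqrt{\rho_2}$, giving $\|\psi\|_{H^2(\R^3)}\leq C(M_1,M_2)$.

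I do not expect a genuine obstacle here: the construction is almost entirely a reduction to Proposition \ref{prop:lift_2d} on the two-dimensional slice plus elementary Leibniz/Fubini estimates. The only delicate point is verifying that the third component $\IM(\bar\phi\,\d_{x_3}\psi)$ vanishes, which uses that $\psi_2$ is real together with the dichotomy between $\{\rho_1>0\}$, where $\bar\phi_1\psi_1$ is real, and $\{\rho_1=0\}$, where $\psi_1$ vanishes (so that Lemma \ref{lemma:LL} is implicitly at work through the $H^1$-structure of $\psi_1$).
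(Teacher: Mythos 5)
Your proposal is correct and follows essentially the same route as the paper: the tensor ansatz $\psi=\psi_1\sqrt{\rho_2}$ with $\psi_1$ from Proposition \ref{prop:lift_2d}, the polar factor $\phi=\phi_1$, the componentwise check of $\Lambda=\IM(\bar\phi\nabla\psi)$ (including the vanishing of the $x_3$-component since $\bar\phi_1\psi_1$ is real a.e.), and the tensorized $H^1$/$H^2$ estimates. Your write-up only adds slightly more detail than the paper on the $j=3$ component and the mixed second derivatives, which is harmless.
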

	
	\begin{proof}
		By applying Proposition \ref{prop:lift_2d} to $(\rho_1,J_1)$, we obtain a lifted wave function $\psi_1\in H^1(\R^2)$ such that 
		\begin{equation*}
			\sqrt{\rho_1}=|\psi_1|, \quad\Lambda_1=\IM(\bar\phi_1\nabla\psi_1),
		\end{equation*}
		where $\phi_1$ is a polar factor of $\psi_1$. Then we claim 
		\begin{equation*}
			\psi(x)=\psi_1(x_1,x_2)\sqrt{\rho_2(x_3)}
		\end{equation*}
		is the wave function associated to $(\sqrt{\rho}, \Lambda)$. Since $\psi_1\in H^1(\R^2)$ and $\sqrt{\rho_2}\in H^1(\R)$ we have $\psi\in H^1(\R^3)$. Furthermore 
		\begin{equation*}
			|\psi|=|\psi_1|\sqrt{\rho_2}=\sqrt{\rho_1\rho_2}=\sqrt{\rho}.
		\end{equation*}
		Obviously $\phi(x)=\phi_1(x_1,x_2)$ is a polar factor of $\psi$, then for $j=1,2$ 
		\begin{equation*}
			\IM(\bar\phi\d_{x_j}\psi)=\IM(\bar\phi_1\d_{x_j}\psi_1)\sqrt{\rho_2}=\Lambda_{1,j}\sqrt{\rho_2}=\Lambda_j,
		\end{equation*}
		and
		\begin{equation*}
			\IM(\bar\phi\d_{x_3}\psi)=\IM(\bar\phi_1\psi_1)\d_{x_3}\sqrt{\rho_2}=0.
		\end{equation*}
		Hence we conclude 
		\begin{equation*}
			\IM(\bar\phi\nabla\psi)=\Lambda.
		\end{equation*}
		
		Furthermore, if we assume $(\rho_1,J_1)$ to satisfy \eqref{eq:C2-2}, which implies $\psi_1\in H^2(\R^2)$ by Proposition \ref{prop:lift_2d}, and assume $\sqrt{\rho_2}\in H^2(\R)$, then it follows $\psi=\psi_1\sqrt{\rho_2}\in H^2(\R^3)$ with 
		\begin{equation*}
			\|\psi\|_{H^2(\R^3)}\leq \|\psi_1\|_{H^2(\R^2)}\|\sqrt{\rho_2}\|_{H^2(\R^2)}\leq C(M_1,M_2).
		\end{equation*}
	\end{proof}
	
	\section{Wave function lifting: spherical symmetric data}\label{sect:lift_s}
	
	This section concerns the wave function lifting method for spherically symmetric data, which is essentially an analogue of the method we applied in \cite{AMZ} to one-dimensional data. Then we prove the global in time existence of spherically symmetric solutions to system \eqref{eq:QHD_md}.
	
	\subsection{Spherically symmetric QHD system}
	
	A pair of spherically symmetric initial data $(\rho_0, J_0) = ((\sqrt{\rho_0})^2, \sqrt{\rho_0}\Lambda_0)$ is given by a hydrodynamic state of the form
	\begin{equation}\label{eq:109}
		\sqrt\rho_0(x)=\sqrt\rho_0(r)\ and\ \Lambda_0(x)=\Lambda_0(r)\frac{x}{|x|}
	\end{equation}
	for $x\in\R^d$ with $r=|x|$, and $\Lambda_0(r)\in \R$.
	In this case the corresponding solutions to the QHD system \eqref{eq:QHD_md} also satisfy the spherical symmetry,
	\[
	\rho(t,x)=\rho(t,r)\ and\ J(t,x)=J(t,r)\frac{x}{|x|}=\sqrt\rho\Lambda\frac{x}{|x|}.
	\]
	In this paper we mainly focus on the case $d=2$ or $d=3$, but the argument of this section applies to general dimension $d$.
	
	By using standard calculus formulae and polar coordinates, the proposition below describes the spherically symmetric formulation for the QHD system.
	
	\begin{prop}
		Let $r>0$,for spherically symmetric solutions of the form 
		\begin{equation*}
			\rho(t,x)=\rho(t,r)\, and\, J(t,x)=J(t,r)\frac{x}{|x|}=\sqrt\rho\Lambda\frac{x}{|x|},
		\end{equation*}
		the QHD system \eqref{eq:QHD_md} is equivalent to 
		\begin{equation}\label{eq:sQHD}
			\left\{\begin{aligned}
				&\d_t\rho+(\d_r+\frac{d-1}{r})(\sqrt\rho\Lambda)=0\\
				&\d_tJ+(\d_r+\frac{d-1}{r})(\Lambda^2+(\d_r\sqrt{\rho})^2)+\d_r p(\rho)=
				\frac14\d_r(\d_r+\frac{d-1}{r})\d_r\rho.
			\end{aligned}\right.
		\end{equation}
		The total mass and energy are defined as 
		\begin{equation}\label{eq:mass_s}
			M(t)=C(d)\int_0^\infty r^{d-1}\,\rho\,dr,
		\end{equation} 
		and 
		\begin{equation}\label{eq:en_s}
			E(t)=C(d)\int_0^r r^{d-1}[\frac12(\d_r\sqrt{\rho})^2+\frac12\Lambda^2+f(\rho)]dr,
		\end{equation}
		where $C(d)$ is the area of $(d-1)$-dimensional unit sphere, and we denote the total energy density by
		\begin{equation}
			e=\frac12(\d_r\sqrt{\rho})^2+\frac12\Lambda^2+f(\rho).
		\end{equation}
		
		Furthermore for spherically symmetric solutions, the higher order functional $I(t)$, introduced by \eqref{eq:higher_md}, has the form 
		\begin{align}\label{eq:higher_s}
			I(t)=C(d)\int_{0}^\infty r^{d-1}\left[\lambda^2+(\d_t\sqrt{\rho})^2\right] dr,
		\end{align}
		where the function $\lambda$ is implicitly given by the identity
		\begin{equation}\label{eq:lambda_s}
			\sqrt{\rho}\lambda=-\frac14(\d_r^2\rho+\frac{d-1}{r}\d_r\rho)+e+p(\rho).
		\end{equation}
	\end{prop}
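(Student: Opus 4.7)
The proposition is essentially a verification that the radial ansatz reduces the multi-dimensional QHD system and its conserved quantities to the stated one-variable forms. The plan is to plug the ansatz into each term of \eqref{eq:QHD_md_2} and simplify using the standard identities \eqref{eq:106}, together with one additional formula for the divergence of the tensor products appearing in the momentum equation.

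First I would handle the continuity equation. Writing $J(t,x) = \sqrt{\rho}(t,r)\,\Lambda(t,r)\,\tfrac{x}{|x|}$ and applying the divergence identity in \eqref{eq:106} immediately yields $\diver J = (\d_r + \tfrac{d-1}{r})(\sqrt{\rho}\Lambda)$, so that $\d_t\rho + \diver J=0$ becomes the first line of \eqref{eq:sQHD}.

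Next comes the momentum equation, for which I would use the form \eqref{eq:QHD_md_2}. Every vector field on the right-hand side is radial in the sense $F(x) = F(r)\tfrac{x}{|x|}$, so it suffices to verify the equation after contracting with $\tfrac{x}{|x|}$ at each point. The pressure and dispersive terms are immediate: $\nabla p(\rho) = \d_r p(\rho)\tfrac{x}{|x|}$ and $\nabla \triangle\rho = \d_r(\d_r+\tfrac{d-1}{r})\d_r\rho\cdot\tfrac{x}{|x|}$ by \eqref{eq:106}. The only calculation requiring care is the tensor divergence. For a radial scalar $u(r)$, the auxiliary identity
\begin{equation*}
\diver\!\left(u(r)\,\frac{x\otimes x}{|x|^2}\right) = \left(\d_r u + \frac{d-1}{r}u\right)\frac{x}{|x|}
\end{equation*}
follows from $\d_i(x_i x_j/|x|^2) = (d-1)x_j/|x|^2$ and $\d_i u = u'(r) x_i/r$. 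Applying this to $u = \Lambda^2 + (\d_r\sqrt{\rho})^2$ (since $\Lambda\otimes\Lambda + \nabla\sqrt{\rho}\otimes\nabla\sqrt{\rho} = (\Lambda^2+(\d_r\sqrt{\rho})^2)\tfrac{x\otimes x}{|x|^2}$ under the radial ansatz) produces exactly the second term on the left-hand side of the second line of \eqref{eq:sQHD}. Assembling the pieces and using $\d_tJ = \d_t(\sqrt{\rho}\Lambda)\tfrac{x}{|x|}$ gives the scalar momentum equation. Conversely, if $(\sqrt{\rho}(t,r),\Lambda(t,r))$ solves \eqref{eq:sQHD}, reversing these computations shows that the radial extension solves \eqref{eq:QHD_md_2} in $\R^d$, establishing equivalence.

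Finally, the formulas \eqref{eq:mass_s}, \eqref{eq:en_s} and \eqref{eq:higher_s} follow from the polar change of variables $\int_{\R^d} g(|x|)\,dx = C(d)\int_0^\infty r^{d-1}g(r)\,dr$, together with the observations that $|\nabla\sqrt{\rho}|^2 = (\d_r\sqrt{\rho})^2$ and $|\Lambda|^2 = \Lambda(r)^2$ under the radial ansatz, so that the energy density $e$ given in \eqref{eq:endens_md} reduces to the scalar expression. For the functional $I(t)$, I note that $\triangle\rho$, $e$ and $p(\rho)$ are all radial, hence $\lambda$ defined by \eqref{eq:xi_def} is radial and \eqref{eq:lambda_s} is just the radial rewriting of \eqref{eq:xi_def} via \eqref{eq:106}; plugging into \eqref{eq:higher_md} and integrating in polar coordinates gives \eqref{eq:higher_s}. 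The only place demanding real attention is the auxiliary tensor-divergence identity; everything else is bookkeeping in radial coordinates.
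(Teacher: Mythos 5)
Your proposal is correct and follows exactly the route the paper intends: the paper gives no written proof, asserting only that the reduction is ``straightforward'' from the identities \eqref{eq:106} and the polar change of variables, and your computation (including the auxiliary tensor-divergence identity $\diver(u(r)\,x\otimes x/|x|^2)=(\d_r u+\tfrac{d-1}{r}u)\tfrac{x}{|x|}$, which is the only non-immediate step) is precisely the calculation being left implicit. All the individual identities you use check out, so nothing further is needed.
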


	\subsection{Wave function lifting for spherically symmetric data}
	In this section we will establish the wave function lifting method for multi-dimensional spherically symmetric hydrodynamic data. More precisely, the first proposition of this section concerns $(\sqrt\rho(r),\Lambda(r))$ for $r\in\R_+=(0,\infty)$ with finite total mass and energy such that $\Lambda=0$ a.e. on the vacuum region $\{\rho=0\}$, and we will show that there exists a wave function $\psi\in H^1(\R_+,r^{d-1}dr)$ associated to $(\sqrt\rho,\Lambda)$ in the sense of the Definition \ref{def:ass}, which can be equivalently written 
	\begin{equation}\label{eq:polar_s}
		\sqrt\rho=|\psi|,\quad \Lambda=\IM(\bar\phi\d_r\psi)
	\end{equation}
	for spherically symmetric hydrodynamic state, where $\phi\in P(\psi)$ is a polar factor of $\psi$.
	
	\begin{prop}\label{prop:lift_s}
		Let $(\sqrt\rho,\Lambda)$ be a hydrodynamic state on $\R_+$ such that $\Lambda=0$ a.e. on the set $\{\rho=0\}$. Let us assume the bounds
		\begin{equation}\label{eq:C1_s}
			\|\sqrt\rho\|_{H^1(\R_+,r^{d-1}dr)}+\|\Lambda\|_{L^2(\R_+,r^{d-1}dr)}\le M_1,
		\end{equation}
		Then there exists a wave function $\psi\in H^1(\R_+,r^{d-1}dr)$ associated to $(\sqrt\rho,\Lambda)$ in the sense of \eqref{eq:polar_s}.
	\end{prop}
	
	\begin{proof}
		
		Let us consider a sequence $\{\delta_n\}$ of Schwartz functions such that $\delta_n(r)>0$ for all $r\in\R^+$ and $\delta_n\to0$ as $n\to\infty$. For instance we may consider $\delta_n(r)=\frac1ne^{-r^2/2}$. We define the following hydrodynamical states 
		\begin{equation*}
			\sqrt{\rho_n}=\sqrt{\rho}+\delta_n, \quad\Lambda_n=\Lambda,
		\end{equation*}
		then we can check that also $(\sqrt{\rho_n}, \Lambda_n)$ satisfy \eqref{eq:C1_s} uniformly in $n\in\N$.
		Furthermore, since $\sqrt{\rho_n}(r)>0$ it is possible to define the velocity field
		\begin{equation*}
			v_n=\frac{\Lambda_n}{\sqrt{\rho_n}}.
		\end{equation*}
		Notice that since $\sqrt{\rho_n}$ is uniformly bounded away from zero on compact intervals we have $v_n\in L^1_{loc}(\R^+)$, hence it makes sense to define the phase function for $r>0$ by 
		\begin{equation*}
			S_n(r)=\int_{r_0}^r v_n(s)\,ds,
		\end{equation*}
		where $r_0>0$ is a given point, and consequently the radially symmetric wave function 
		\begin{equation*}
			\psi_n(r)=\sqrt{\rho_n}(r)e^{iS_n(r)}.
		\end{equation*}
		We can now show that the sequence $\psi_n$ has a limit $\psi\in H^1(\R^+,r^{d-1}dr)$ which satisfies the polar factorization. Indeed, since
		$\d_r\psi_n=e^{iS_n}(\d_r\sqrt{\rho_n}+i\Lambda_n)$, we have
		\begin{equation*}
			\|\psi_n\|_{H^1(\R^+,r^{d-1}dr)}^2=\|\sqrt{\rho_n}\|_{H^1(\R^+,r^{d-1}dr)}^2+\|\Lambda_n\|_{L^2(\R^+,r^{d-1}dr)}^2\leq C,
		\end{equation*}
		thus, up to subsequences, $\psi_n\rightharpoonup\psi$ in $H^1(\R^+,r^{d-1}dr)$. On the other hand, we also have $\sqrt{\rho_n}\to\sqrt{\rho}$ in $H^1(\R^+,r^{d-1}dr)$, $\Lambda_n=\Lambda$ and moreover $e^{iS_n}\rightharpoonup\phi$ weakly* in $L^\infty(\R^+)$, for some $\phi\in L^\infty(\R^+)$. It is straightforward to check that $\phi$ is a polar factor for $\psi$, since $\psi_n=\sqrt{\rho_n}e^{iS_n}\rightharpoonup\sqrt{\rho}\phi$. Furthermore
		\begin{equation*}
			\d_r\psi_n=e^{iS_n}\left(\d_r\sqrt{\rho_n}+i\Lambda_n\right)\rightharpoonup\phi\left(\d_r\sqrt{\rho}+i\Lambda\right),
		\end{equation*}
		so that $\d_r\psi=\phi\left(\d_r\sqrt{\rho}+i\Lambda\right)$ and hence $(\sqrt{\rho}, \Lambda)$ are the hydrodynamical states associated to $\psi$.
	\end{proof}
	
	We should emphasise that, different from the situation we considered in Section \ref{sect:lift_2d}, the possible existence of large vacuum prevents the wave functions obtained in Proposition \ref{prop:lift_s} to be unique at $H^1$ level. Indeed, the next lemma shows that one can introduce arbitrary constant phase shift in each connected component of the non-vacuum region $\{\rho>0\}$ without breaking the $H^1$ regularity of the wave function, and it leads to the non-uniqueness of lifted wave functions, see the Remark \ref{rmk:non_uniq} (similar result is discussed in more details in Section 3 in \cite{AMZ}).
	
	Here we first recall some elementary properties of 1-dimensional Sobolev functions that will be used soon. Let $g$ be a function in $H^1(a,b)$ and $H^1(b,c)$, then $g\in H^1(a,c)$ if and only if $g$ is continuous at point $b$, and in this case we have
	\[
	\|\d_r g\|_{L^2(a,c)}=\|\d_r g\|_{L^2(a,b)}+\|\d_r g\|_{L^2(b,c)}.
	\]
	Then let us consider $g\in H^1(\Omega)$, where $\Omega\subset\R_+$ is an open set. The continuity of $g$ allows us to decomposed the set $\{r;g(r)\ne0\}$ into disjoint open intervals, i.e. 
	\begin{equation}\label{eq:poscomp}
		\{r;g\ne0\}=\cup_{j}(a_j,b_j), \quad g(a_j)=g(b_j)=0.
	\end{equation}
	
	\begin{lem}\label{lemma:H1ext}
		Let $g\in H^1(\Omega)$ and let $\Theta\in L^\infty(\Omega)$ be a piecewise constant phase shift given by the formula
		\begin{equation}\label{eq:theta}
			\Theta=\exp\left(i\sum_j\theta_j\mathbf{1}_{(a_j,b_j)}\right),\quad \theta_j\in[0,2\pi),
		\end{equation}
		where $(a_j,b_j)$'s are the components of $\{g\ne 0\}$ as in \eqref{eq:poscomp}. 
		Then we have $\Theta g\in H^1(\Omega)$, and 
		\begin{equation}\label{eq:dtheta}
			\d_x (\Theta g)=\Theta \d_x g.
		\end{equation} 
	\end{lem}
	
	\begin{proof}
		The proof of the lemma follows a standard argument of weak derivative. We take $\eta\in C_c^\infty(\Omega)$ to be a test function and consider the weak derivative $\d_r(\Theta g)$:
		\begin{align*}
			\int_\Omega\eta\d_r (\Theta g)\,dr=&-\int_\Omega \Theta g\d_r\eta\,dr=-\sum_j\int_{a_j}^{b_j}e^{i\theta_j}g\d_r\eta \,dr.
		\end{align*}
		On each interval $(a_j,b_j)$ by integration by parts and $g(a_j)=g(b_j)=0$, we obtain
		\[
		\int_\Omega\eta\d_r (\Theta g)\,dx=\sum_j\int_{a_j}^{b_j}e^{i\theta_j}\eta \d_r g \,dr.
		\]
		Furthermore the vanishing derivative Lemma \ref{lemma:LL} implies $\d_r g=0$ a.e. outside $\{r;g(r)\ne0\}=\underset{j}{\cup}(a_j,b_j)$, therefore we can conclude
		\[
		\int_\Omega\eta\d_r (\Theta g)\,dr=\int_\Omega\eta \Theta\d_rg\,dx,
		\]
		namely $\d_r (\Theta g)=\Theta\d_rg\in L^2(\Omega)$, which finishes the proof of the lemma.
	\end{proof}
	
	\begin{rem}\label{rmk:non_uniq}
		As pointed out before, a direct consequence of Lemma \ref{lemma:H1ext} is the non-uniqueness of wave function lifting for  given hydrodynamic state at $H^1$ level, due to the arbitrary phase shifts allowed on the components of the non-vacuum regions. More precisely, let $\psi\in H^1$ be a wave function associated to $(\sqrt\rho,\Lambda)$ in the sense of \eqref{eq:polar_s}, then for any piecewise phase shift function $\Theta$ of the form \eqref{eq:theta}, by formula \eqref{eq:dtheta} it is straightforward to check that $\Theta\psi\in H^1$ is another wave function associated to the same hydrodynamic state, whose polar factors are $P(\Theta\psi)=\Theta P(\psi)$.
	\end{rem}
	
	The next goal of this section is to prove the wave function lifting at $H^2$ level for spherically symmetry state satisfying the conditions in Theorem \ref{thm:glob2_s}. By using standard coordinate change, we see that to prove a spherically symmetry $H^1$ wave function $\psi$ has higher regularity in $H^2(\R^d)$, we need to show $\d_r\psi\in H^1_{loc}(\R_+)$ with the bound
	\[
	\|\d_r^2\psi+\frac{d-1}{r}\d_r\psi\|_{L^2(\R_+,r^{d-1}dr)}\leq C.
	\]
	We should emphasise the important fact that, by the 1-dimensional Sobolev embedding, the requirement $\d_r\psi\in H^1_{loc}(\R_+)$ implies $\d_r\psi$ to be a continuous function on $\R_+$. However this is in general not true for the lifted wave functions $\psi$ obtain in Proposition \ref{prop:lift_s}, since $\d_r\psi$ may experience jump discontinuity at vacuum boundaries. To overcome this difficulty, we will construct a well-design piecewise constant phase shift $\Theta$ of the form \eqref{eq:theta} to balance the possible phase jumps of $\d_r\psi$ at vacuum boundaries, such that $\tilde\psi=\Theta\psi$ has higher regularity in $H^2$ space. 
	
	\begin{prop}\label{prop:lift2_s}
		Let $(\sqrt\rho,\Lambda)$ be a hydrodynamic state on $\R_+$ as given in Proposition \ref{prop:lift_s}. If we further assume the conditions
		\begin{itemize}
			\item $\d_r^2\rho\in L^1_{loc}(\R_+)$, $\d_r J\in L^1_{loc}(\R_+)$;
			\item the energy density $e:=\frac12(\d_r\sqrt{\rho})^2+\frac12\Lambda^2+f(\rho)$ is continuous;
			\item the hydrodynamic state satisfies
			\begin{equation}\label{eq:C2_s}
				\begin{aligned}
					\|\left[\frac{\Lambda^2}{\sqrt{\rho}}\right.-&\left.(\d_r+\frac{d-1}{r})\d_r\sqrt\rho\right]\mathbf{1}_{\{\rho>0\}}\|_{L^2(\R_+,r^{d-1}dr)}\\
					&+\|\frac{\d_r J+(d-1)J/r}{\sqrt{\rho}}\mathbf{1}_{\{\rho>0\}}\|_{L^2(\R_+,r^{d-1}dr)}\leq M_2.
				\end{aligned}
			\end{equation}
		\end{itemize}
		then there exists a wave function $\psi\in H^1(\R_+,r^{d-1}dr)$ associated to $(\sqrt\rho,\Lambda)$ in the sense of \eqref{eq:polar_s}, satisfying $\d_r\psi\in H^1_{loc}(\R_+)$ and the bound
		\begin{equation}\label{eq:B2_s}
			\|\d_r^2\psi+\frac{d-1}{r}\d_r\psi\|_{L^2(\R_+,r^{d-1}dr)}\leq C(M_1,M_2).
		\end{equation}
	\end{prop}
	
	The energy bound $\eqref{eq:C1_s}$ of finite mass and energy implies the continuity of the density $\rho$, therefore as in \eqref{eq:poscomp} we can decompose the non-vacuum region $\{\rho>0\}$ as countable disjoint open intervals, namely
	\begin{equation}\label{eq:nonvac}
		\{\rho>0\}=\underset{j}{\cup}(a_j,b_j).
	\end{equation}
	Before proving Proposition \ref{prop:lift2_s}, we first state the following technical lemma.
	
	\begin{lem}\label{lemma:H2}
		Let us assume $(\sqrt\rho,\Lambda)$ is a hydrodynamic state as given in Proposition \ref{prop:lift2_s}, and let $\psi\in H^1(\R_+,r^{d-1}dr)$ be a wave function associated to $(\sqrt\rho,\Lambda)$. Then the identity
		\begin{equation}\label{eq:dx2psi}
			\d_r^2\psi=\left[\d^2_r\sqrt\rho-\frac{\Lambda^2}{\sqrt\rho}+i\frac{\d_rJ}{\sqrt\rho}\right]\phi
		\end{equation}
		holds true in $L^1_{loc}(a_j,b_j)$ on all the connected components $(a_j,b_j)$ of $\{\rho>0\}$, with $\phi\in P(\psi)$. As a consequence $\d_r\psi\in H^1(a_j,b_j)$ if $a_j>0$, and $\d_r\psi\in H^1_{loc}(a_j,b_j)$ if $a_j=0$.
		
		Moreover, if $\tilde r>0$ is an accumulation point of vacuum boundaries $\underset{j}{\cup}\{a_j,b_j\}$, then it follow $\d_r\psi(\tilde r)=0$.
	\end{lem}
	
	\begin{proof}
		Consider $(\sqrt\rho,\Lambda)$ to be a hydrodynamic state as given in Proposition \ref{prop:lift2_s}, and let $\psi\in H^1(\R_+,r^{d-1})$ be a wave function associated to $(\sqrt\rho,\Lambda)$. By using the polar factorization, it follows
		\[
		\psi=\sqrt\rho\phi,\;\d_r\psi=(\d_r\sqrt\rho+i\Lambda)\phi
		\]
		for some $\phi\in P(\psi)$. Since $\psi$ is continuous and $|\psi|>0$ on $(a_j,b_j)$, the polar factor $\phi\in P(\psi)$ is uniquely defined, $\phi=\frac{\psi}{|\psi|}$, and a direct computation gives
		\begin{align*}
			\d_r\phi=\frac{1}{|\psi|}\left(\d_r\psi-\d_r|\psi|\frac{\psi}{|\psi|}\right)
			=\frac{i\Lambda}{\sqrt\rho}\phi.
		\end{align*}
		Using the identities above and the definition of hydrodynamic states, we obtain
		\begin{align*}
			\left[\d_r^2\sqrt\rho-\frac{\Lambda^2}{\sqrt{\rho}}+i\frac{\d_rJ}{\sqrt\rho}\right]\phi=&\left(\d_r^2\sqrt\rho+i\d_r\Lambda+i\d_r\sqrt\rho\frac{\Lambda}{\sqrt\rho}-\frac{\Lambda^2}{\sqrt\rho}\right)\phi\\
			=&\d_r\left[(\d_r\sqrt\rho+i\Lambda)\phi\right]=\d_r(\d_r\psi).
		\end{align*}
		Thus we prove the identity \eqref{eq:dx2psi}, and by using the bounds \eqref{eq:C1_s} and \eqref{eq:C2_s}, we obtain
		\begin{equation}\label{eq:locbd_1}
			\begin{aligned}
				\|\d_r^2\sqrt\rho-\frac{\Lambda^2}{\sqrt{\rho}}\|_{L^2(a_j,b_j)}\leq & 
				a_j^{\frac{1-d}{2}}\|\frac{\Lambda^2}{\sqrt{\rho}}-(\d_r+\frac{d-1}{r})\d_r\sqrt\rho\|_{L^2((a_j,b_j),r^{d-1}dr)}\\
				&+(d-1)a_j^{-\frac{1+d}{2}}\|\d_r\sqrt\rho\|_{L^2((a,b),r^{d-1}dr)}
				\leq C(M_1,M_2,a_j),
			\end{aligned}
		\end{equation}
		and similarly we have
		\begin{equation}\label{eq:locbd_2}
			\begin{aligned}
				\|\frac{\d_rJ}{\sqrt\rho}\|_{L^2(a_j,b_j)}\leq & 
				a_j^{\frac{1-d}{2}}\|\frac{\d_r J+(d-1)J/r}{\sqrt{\rho}}\|_{L^2((a_j,b_j),r^{d-1}dr)}\\
				&+(d-1)a_j^{-\frac{1+d}{2}}\|\Lambda\|_{L^2((a_j,b_j),r^{d-1}dr)}
				\leq C(M_1,M_2,a_j).
			\end{aligned}
		\end{equation}
		Thus it follows $\d_r\psi\in H^1(a_j,b_j)$ if $a_j>0$. Similarly in the case $a_j=0$, we can show $\d_r\psi\in H^1_{loc}(a_j,b_j)$.
		
		Now let us assume $\tilde r>0$ is an accumulation point of the vacuum boundaries $\cup_j\{a_j,b_j\}$, namely there exists a sequence of intervals $\{(a_{j_k},b_{j_k})\}_k$ such that $(a_{j_k},b_{j_k})\to \tilde r$ as $k\to\infty$. On each $(a_{j_k},b_{j_k})$, we take $r_{j_k}=\frac12(a_{j_k}+b_{j_k})$, and we claim $\d_r\psi(r_{j_k})\to 0$ as $k\to\infty$. On the other hand, by polar factorization the energy density is given by
		\[
		e=\frac12|\d_r\psi|^2+f(|\psi|^2).
		\]
		In particular, the continuity condition of $e$ given in Proposition \ref{prop:lift2_s} implies that $|\d_r\psi|$ is also a continuous function. Thus if the claim holds, by the continuity of $|\d_r\psi|$ we obtain $\d_r\psi(\tilde r)=0$. 
		
		The claim $\d_r\psi(r_{j_k})\to 0$ can be proved by a contradiction. Indeed, we assume that
		$$\underset{k}{\liminf}|\d_r\psi(r_{j_k})|>c>0.$$
		Since $a_{j_k}$ and $b_{j_k}$ are vacuum boundary points, namely $\psi(a_{j_k})=\psi(b_{j_k})=0$, then we have
		\[
		0=\int_{a_{j_k}}^{b_{j_k}}\d_r\psi(s)ds.
		\]
		Moreover, since $(a_{j_k},b_{j_k})\to \tilde r>0$, we can assume $a_{j_k}\sim \tilde r$ for $k$ large, then by the first part of this Lemma, we have $\d_r\psi\in H^1(a_{j_k},b_{j_k})$, so we can decompose the integral as
		\begin{align*}
			0=&\int_{a_{j_k}}^{b_{j_k}}\left[\d_r\psi(r_{j_k})+\int_{r_{j_k}}^{s}\d_r^2\psi(s_1)ds_1\right]ds\\
			=&\d_r\psi(r_{j_k})\delta_{j_k}+\int_{a_{j_k}}^{b_{j_k}}\int_{r_{j_k}}^{s}\d_r^2\psi(s_1)ds_1ds,
		\end{align*}
		where $\delta_{j_k}=b_{j_k}-a_{j_k}$. By using the identity \eqref{eq:dx2psi} and the bounds \eqref{eq:locbd_1},  \eqref{eq:locbd_2}, it follows
		\[
		\|\d_r^2\psi\|_{L^2(a_{j_k},b_{j_k})}\leq \|\d_r^2\sqrt\rho-\frac{\Lambda^2}{\sqrt{\rho}}\|_{L^2(a_{j_k},b_{j_k})}+\|\frac{\d_rJ}{\sqrt\rho}\mathbf{1}_{\{\rho>0\}}\|_{L^2(a_{j_k},b_{j_k})}\leq C(M_1,M_2,\tilde r),
		\]
		then we have
		\[
		\left|\int_{a_{j_k}}^{b_{j_k}}\int_{r_{j_k}}^{s}\d_r^2\psi(s_1)ds_1ds\right|\leq \delta_{j_k}^\frac32\|\d_r^2\psi\|_{L^2(a_{j_k},b_{j_k})}
		\leq \delta_{j_k}^\frac32 C(M_1,M_2,\tilde r).
		\]
		For $k$ large enough one has $|\d_r\psi(r_{j_k})|>c$ and $\delta_{j_k}^\frac12 C(M_1,M_2,\tilde r)<\frac{c}{2}$ since $\delta_{j_k}\to0$ as $k\to\infty$, hence it follows
		\begin{align*}
			0\ge |\d_r\psi(r_{j_k})\delta_{j_k}|-\left|\int_{a_{j_k}}^{b_{j_k}}\int_{r_{j_k}}^{s}\d_r^2\psi(s_1)ds_1ds\right|>\frac{c}{2}\delta_{j_k}>0,
		\end{align*}
		which is a contradiction. 
	\end{proof}
	
	Now we are at the point to prove Proposition \ref{prop:lift2_s}.
	
	\begin{proof}[Proof of Proposition \ref{prop:lift2_s}]
		Let us assume that $(\sqrt\rho,\Lambda)$ is a hydrodynamic state as given in the hypothesis of Proposition. As in \eqref{eq:nonvac}, let us denote
		\begin{equation}\label{eq:non_vac}
			\{\rho>0\}=\cup_{j}(a_j, b_j),\quad \rho(a_j)=\rho(b_j)=0.
		\end{equation}
		Lemma \ref{lemma:H2} shows that on all the connected components $(a_j,b_j)$ of $\{\rho>0\}$ we have $\d_r\psi\in H^1(a_j,b_j)$, and the identity \eqref{eq:dx2psi} holds true almost everywhere on $(a_j,b_j)$. However as discussed before, to obtain a $H^2$ wave function defined on the whole $\R_+$, we need to overcome the possible jump discontinuity at vacuum boundaries. The additionally assumption on the continuity of energy density allows us to provide an well-designed choice of the phase shifts on every connected component and to construct another wave function $\tilde\psi\in H^2$ associated to the same hydrodynamical states. More precisely, starting from $\psi$, we will construct a $\tilde\psi\in H^2_{loc}(\R_+)$ such that $\tilde\psi=\Theta\psi$, where $\Theta$ is a piecewise phase shift of the form
		\begin{equation}\label{eq:sigmarot}
			\Theta=\exp\left(i\sum_j\theta_j\mathbf{1}_{(a_j,b_j)}\right),\quad \theta_j\in[0,2\pi).
		\end{equation} 
		
		Before the construction of $\Theta$, we first give some discussion on the vacuum boundaries $\cup_j\{a_j,b_j\}$. Let us denote the isolated vacuum points by $\{\mathring{a}_j\}$, namely $\rho(\mathring{a}_j)=0$ and $\rho>0$ on $(\mathring{a}_j-\epsilon,\mathring{a}_j+\epsilon)\setminus\{\mathring{a}_j\}$ for some small $\epsilon>0$. Then we define the set
		\[
		W=\{\rho>0\}\underset{j}{\cup}\{\mathring{a}_j\}.
		\]
		By the continuity of $\rho$ and the definition of $\mathring{a}_j$, it is straightforward to see that $W$ is an open set, consequently it can be represented as disjoint open intervals
		\[
		W=\underset{j}{\cup}(\bar{a}_j,\bar{b}_j),
		\]
		where $\bar{a}_j,\bar{b}_j\in\cup_j\{a_j,b_j\}\setminus\cup_j\{\mathring{a}_j\}$. The set $\cup_j\{a_j,b_j\}\setminus\cup_j\{\mathring{a}_j\}$ consists of the following two types of vacuum boundary points. The first type is the boundary of large vacuum with positive measure, namely $|\psi|^2=\rho \equiv 0$ on $(\bar{a}_j-\epsilon,\bar{a}_j)$ or on $(\bar{b}_j,\bar{b}_j+\epsilon)$ for some $\eps>0$, and by the continuity of $|\d_r\psi|$ we have $|\d_r\psi(\bar{a}_j)|=|\d_r\psi(\bar{b}_j)|=0$. The second type is the accumulation point of vacuum boundaries, where $|\d_r\psi|$ also vanishes as proved in Lemma \ref{lemma:H2}. 
		
		Let us fix an interval $(\bar{a}_j,\bar{b}_j)$ and denote $\{\mathring{a}_{j_k}\}_{k=K_1}^{K_2}$ the vacuum points in $(\bar{a}_j,\bar{b}_j)$. By our construction, the only possible accumulation points of $\{\mathring{a}_{j_k}\}$ are $\bar{a}_j$ and $\bar{b}_j$, hence we can assume $\mathring{a}_{j_k}<\mathring{a}_{j_{k+1}}$ for all $K_1\leq k\leq K_2$. 
		
		The phase shift function $\Theta$ on $(\bar{a}_j,\bar{b}_j)$ is constructed through the following strategy. We start from a fixed $(\mathring{a}_{j_0},\mathring{a}_{j_1})$ and set $\theta_{j_0}=0$, namely $\Theta=1$ on $(\mathring{a}_{j_0},\mathring{a}_{j_1})$. To extend the definition of $\Theta$ to $(\mathring{a}_{j_1},\mathring{a}_{j_2})$, we notice that the intervals $(\mathring{a}_{j_k},\mathring{a}_{j_{k+1}})$ are connected components of the set $\{\rho>0\}$. Thus by Lemma \ref{lemma:H2} we have the $H^1$ regularity of $\d_r\psi$ on $(\mathring{a}_{j_0},\mathring{a}_{j_1})$ and $(\mathring{a}_{j_1},\mathring{a}_{j_2})$, which implies the the left and right side limits $\d_r\psi(\mathring{a}_{j_1}^-)$ and $\d_r\psi(\mathring{a}_{j_1}^+)$ exist. On the other hand, the continuity of the energy density $e$ implies $|\d_r\psi|=\sqrt{2e-f(\rho)}$ is continuous, hence we have $|\d_r\psi(\mathring{a}_{j_1}^-)|=|\d_r\psi(\mathring{a}_{j_1}^+)|$. Therefore we can choose a $\theta_{j_1}\in [0,2\pi)$ such that $(\Theta\cdot\d_r\psi)(\mathring{a}_{j_1}^-)=e^{i\theta_{j_1}}\d_r\psi(\mathring{a}_{j_1}^+)$ ($\theta_{j_1}=0$ if $|\d_r\psi|$ vanish at $\mathring{a}_{j_1}$), and we define $\Theta=e^{i\theta_{j_1}}$ on $(\mathring{a}_{j_1},\mathring{a}_{j_2})$. By repeating this process inductively, we extend the definition of $\Theta$ to the whole $(\bar{a}_j,\bar{b}_j)$. Moreover we have $\Theta\d_r\psi\in H^1(\mathring{a}_{j_k},\mathring{a}_{j_k})$ and our construction ensures the continuity of $\Theta\d_r\psi$ at all point vacuum $\mathring{a}_{j_k}$. Thus by the elementary property of Sobolev functions, it follows $\Theta\d_r\psi\in H^1(\bar{a}_j,\bar{b}_j)$ (or $H^1_{loc}(\bar{a}_j,\bar{b}_j)$ if $\bar{a}_j=0$), and by Lemma \ref{lemma:H1ext} and Lemma \ref{lemma:H2} we have the identity
		\begin{equation}\label{eq:401}
			\d_r(\Theta\d_r\psi)=\left[\d^2_r\sqrt\rho-\frac{\Lambda^2}{\sqrt\rho}+i\frac{\d_rJ}{\sqrt\rho}\right]\Theta\phi\quad a.e.\;on\;(\bar{a}_j,\bar{b}_j),\quad \phi\in P(\psi).
		\end{equation}
		
		To this point we have constructed the phase shift function $\Theta$ on $W=\underset{j}{\cup}(\bar{a}_j,\bar{b}_j)$, and we extend $\Theta$ to the whole $\R_+$ by definition $\Theta=1$ on $W^c$. It is straightforward to see $\Theta$ has the form \eqref{eq:sigmarot} as required, and we define $\tilde\psi=\Theta\psi$. By Lemma \ref{lemma:H1ext} we have $\d_r\tilde\psi=\Theta\d_r\psi$, then it is straightforward to check $\tilde\psi\in H^1(\R_+,r^{d-1})$ is also a wave function associated to $(\sqrt\rho,\Lambda)$ with the polar factorization 
		\begin{equation}
			\d_r\tilde\psi=(\d_r\sqrt\rho+i\Lambda)\Theta\phi.
		\end{equation}
		Now we claim $\d_r\tilde\psi=\Theta\d_r\psi\in H^1_{loc}(\R_+)$. Let $\eta\in C_c^\infty(\R_+)$ be an arbitrary test function supported in $(\epsilon,\infty)$, then we have
		\[
		\int_{\R_+}\eta\,\d_r(\Theta\d_r\psi)dr\coloneqq-\int_{\R_+}(\Theta\d_r\psi)\d_r\eta \,dr.
		\]
		By Lemma \ref{lemma:LL} it follows $\d_r\psi=0$ a.e. outside the set $W$, therefore
		\[
		\int_{\R_+}\eta\,\d_r(\Theta\d_r\psi)dr=-\int_{W}(\Theta\d_r\psi)\d_r\eta\,dr=-\underset{j}{\sum}\int_{\bar{a}_j}^{\bar{b}_j}(\Theta\d_r\psi)\d_r\eta\,dr.
		\]
		Since $\Theta\d_r\psi\in H^1(\bar{a}_j,\bar{b}_j)$ and we can write
		\[
		\int_{\bar{a}_j}^{\bar{b}_j}(\Theta\d_r\psi)\d_r\eta\,dr=-\int_{\bar{a}_j}^{\bar{b}_j}\eta\,\d_r(\Theta\d_r\psi)dr+\eta(\bar{b}_j)(\Theta\d_r\psi)(\bar{b}_j)-\eta(\bar{a}_j)(\Theta\d_r\psi)(\bar{a}_j),
		\]
		where the boundary terms vanish since $\d_r\psi(\bar{a}_j)=\d_r\psi(\bar{b}_j)=0$ if $\bar{a}_j>0$ as we prove before, or we have $\eta(\bar{a}_j)=0$ if $\bar{a}_j=0$. By using the identity \eqref{eq:401} and the bound \eqref{eq:C2_s}, we obtain
		\begin{align*}
			\left|\int_{\R_+}\eta\,\d_r(\Theta\d_r\psi)dr\right|\leq & \|\eta\|_{L^2(\R_+)}\left(\|\d^2_r\sqrt\rho-\frac{\Lambda^2}{\sqrt\rho}\|_{L^2(W\cap(\epsilon,\infty))}+\|\frac{\d_rJ}{\sqrt\rho}\|_{L^2(W\cap(\epsilon,\infty))}\right)\\
			\leq & C(M_1,M_2,\epsilon)\|\eta\|_{L^2(\R_+)}.
		\end{align*}
		Thus we conclude $\d_r\tilde\psi=\Theta\d_r\psi\in H^1_{loc}(\R_+)$. Moreover by Lemma \ref{lemma:LL}, we have $\d_r(\Theta\d_r\psi)=0$ a.e. on the set $\{\rho>0\}=\{\psi=0\}$, therefore the identity
		\begin{equation}\label{eq:402}
			\d_r^2\tilde\psi=\d_r(\Theta\d_r\psi)=\left[\d^2_r\sqrt\rho-\frac{\Lambda^2}{\sqrt\rho}+i\frac{\d_rJ}{\sqrt\rho}\right]\mathbf{1}_{\{\rho>0\}}\Theta\phi
		\end{equation}
		holds true a.e. on $\R_+$. To conclude the proof of Proposition, we only need to show the bound \eqref{eq:B2_s} for the wave function $\tilde\psi$, which is a direct consequence of the identity \eqref{eq:402}, polar factorization and the bound \eqref{eq:C2_s}.
	\end{proof}
	
	As a conclusion of this section, we can prove Theorem \ref{thm:glob_s} and Theorem \ref{thm:glob2_s}, namely the global existence results of spherically symmetric weak solutions to \eqref{eq:QHD_md}, by using Proposition \ref{prop:lift_s}, Proposition \ref{prop:lift2_s}, the well-posedness of the NLS equation and Theorem \ref{thm:QHD_old}. The proof follows exactly the same argument as the proof of Theorem \ref{thm:glob_md} and Theorem \ref{thm:glob2_md} in Section \ref{sect:lift_2d} and we omit it here.

	\section{Dispersive estimates and bounds on $I(t)$}\label{sect:apri_md}
	
	In this section we will collect some a priori estimates satisfied by finite energy weak solutions to $\eqref{eq:QHD_md}$. If we restricted our analysis to Schr\"odinger-generated solutions - like the ones constructed in the global existence theorems in this paper - then the dispersive estimates inherited by the NLS dynamics would yield a wide range of information. For general solutions that is not the case; the quasi-linear nature of system \eqref{eq:QHD_md} prevents to successfully exploit semigroup techniques to infer suitable smoothing estimates. However we are still able to prove some a priori estimates for general solutions to QHD system in genuine hydrodynamic approaches.
	
	We first define the functional
	\begin{equation}\label{eq:pc_en_md}
		H(t)=\int_{\R^d}\frac{|x|^2}{2}\rho(t, x)\,dx-t\int_{\R^d}x\cdot J(t, x)\,dx+t^2E(t),
	\end{equation}
	where the total energy $E(t)$ is given by \eqref{eq:en_QHD_md}.
	The functional $H(t)$ is the hydrodynamic analogue of the pseudo-conformal energy for NLS solutions \cite{GV}, see also \cite{HNT1, HNT2}. Furthermore, the functional $H(t)$ can also be expressed as
	\begin{equation}\label{eq:pc_en_2_md}
		H(t)=\int_{\R^d}\frac{t^2}{2}|\nabla\sqrt{\rho}|^2+\frac{t^2}{2}|\Lambda-\frac{x}{t}\sqrt{\rho}|^2+t^2f(\rho)\,dx.
	\end{equation}
	Thus roughly speaking the functional $H(t)$ measures the time behaviour of the strength of the quantum interaction $|\nabla\sqrt\rho|^2$, the internal energy $f(\rho)$ and the distance between $\Lambda$ and a rarefaction wave $\frac{x}{t}\sqrt\rho$.
	
	\begin{prop}
		Let $(\rho, J)$ be a finite energy weak solution to \eqref{eq:QHD_md} as in the Definition \ref{def:FEWS} such that the total energy $E(t)$ given by \eqref{eq:en_QHD_md} is non-increasing, and 
		\begin{equation}\label{eq:fin_var_md}
			\int_{\R^d}|x|^2\rho_0(x)\,dx<\infty.
		\end{equation}
		Then we have
		\begin{equation*}
			H(t)+d\int_0^t\int_{\R^d}s\rho \left[f'(\rho)-(\frac{2}{d}+1)f(\rho)\right]dxds\leq\int_{\R^d}\frac{|x|^2}{2}\rho_0(x)\,dx.
		\end{equation*}
	\end{prop}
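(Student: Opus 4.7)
This identity is the hydrodynamic analogue of the pseudo-conformal law for NLS, so the natural route is to differentiate $V(t)$ in time, substitute from the equations \eqref{eq:QHD_md_2}, and check that after integration by parts only a term involving $p(\rho)$ and $f(\rho)$ survives. The formal multipliers are $\tfrac{|x|^2}{2}$ against the continuity equation and $x$ against the momentum equation; since neither is compactly supported, the argument must be carried out on smooth cutoffs and passed to the limit, using both the finite energy bound and the finite variance assumption \eqref{eq:fin_var_md}.

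\textbf{Formal computation.} Differentiating and using energy conservation (valid for the class of solutions we consider),
\begin{equation*}
V'(t)=\int_{\R^d}\tfrac{|x|^2}{2}\d_t\rho\,dx-\int_{\R^d}x\cdot J\,dx-t\int_{\R^d}x\cdot\d_tJ\,dx+2tE(t).
\end{equation*}
The continuity equation together with an integration by parts yields $\int\tfrac{|x|^2}{2}\d_t\rho\,dx=\int x\cdot J\,dx$, which cancels the second term. Substituting $\d_tJ$ from the momentum equation in \eqref{eq:QHD_md_2} and integrating by parts via $\d_ix_j=\delta_{ij}$ and $\diver x=d$, the quantum pressure term vanishes, since $\int x\cdot\nabla\Delta\rho\,dx=-d\int\Delta\rho\,dx=0$, while the convective, capillary, and classical pressure terms contribute $\int(|\Lambda|^2+|\nabla\sqrt{\rho}|^2)\,dx+d\int p(\rho)\,dx$. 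Combining with $2tE(t)=t\int\bigl(|\nabla\sqrt{\rho}|^2+|\Lambda|^2+2f(\rho)\bigr)\,dx$, the kinetic and gradient pieces cancel, leaving $V'(t)=-td\int\bigl[p(\rho)-\tfrac{2}{d}f(\rho)\bigr]\,dx$. The thermodynamic identity $\rho f'(\rho)=f(\rho)+p(\rho)$, immediate from $f(\rho)=\rho\int_0^\rho s^{-2}p(s)\,ds$, recasts the integrand as $\rho f'(\rho)-(1+\tfrac{2}{d})f(\rho)$; integrating from $0$ to $t$ with $V(0)=\int\tfrac{|x|^2}{2}\rho_0\,dx$ yields the stated equality.

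\textbf{Rigorous justification and main obstacle.} The one delicate step is that $\tfrac{|x|^2}{2}$ and $x$ are not admissible test functions in Definition \ref{def:FEWS}. My plan is to fix a smooth radial cutoff $\chi_R$ equal to $1$ on $\{|x|\le R\}$ and supported in $\{|x|\le 2R\}$, choose $\eta=\theta(t)\chi_R(x)\tfrac{|x|^2}{2}$ and $\zeta=\theta(t)\chi_R(x)x$ in parts (ii) and (iii) of Definition \ref{def:FEWS}, and pass to the limit first in $\theta\to\mathbf{1}_{[0,t]}$ and then in $R\to\infty$. The cutoff commutators produce error terms supported in the annulus $\{R\le|x|\le 2R\}$ involving $\rho$, $|J|$, $|\Lambda|^2$, $|\nabla\sqrt{\rho}|^2$, and $p(\rho)$, all of which tend to zero as $R\to\infty$ by the uniform energy bound. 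The subtle point is that $\int|x|^2\rho(t,x)\,dx$ must remain finite for $t>0$; this propagation is itself extracted from the truncated identity by a Gronwall-type argument on the cutoff version $V_R(t)$, bounding the cross term $\int x\cdot J\,dx$ via Cauchy--Schwarz by $\bigl(\int|x|^2\rho\,dx\bigr)^{1/2}\bigl(\int|\Lambda|^2\,dx\bigr)^{1/2}$. I expect this regularization-and-propagation step to be the main technical obstacle; the rest is algebra.
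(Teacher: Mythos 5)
Your proposal is correct and follows essentially the same route as the paper: differentiate $V(t)$, substitute from \eqref{eq:QHD_md_2}, integrate by parts (the quantum term dropping out, the kinetic and gradient terms cancelling against $2tE(t)$) so that only the $p(\rho)$--$f(\rho)$ term survives, and handle the unbounded multipliers and the propagation of finite variance by a cutoff-plus-Gronwall argument. The paper performs the cutoff/Gronwall step only to show $\int|x|^2\rho(t)\,dx<\infty$ and leaves the computation of $V'(t)$ formal, so your plan is, if anything, slightly more careful on that point.
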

	\begin{proof}
		Since $(\rho,J)$ are only assumed to be a weak solution to \eqref{eq:QHD_md}, to rigorously prove the proposition we need to choose suitable test functions $\eta,\zeta\in C^\infty_c([0,T)\times\R^d)$ in the Definition \ref{def:FEWS}, which are obtained by introducing the following cut-off functions. The spacial cut-off functions is given by
		\[
		\Phi_R(x)=\frac{|x|^2}{2}\chi_R^2(|x|),\quad R>0,
		\]
		where $\chi_R(|x|)=\chi(|x|/R)$, and $\chi\in C^\infty_c(\R)$ such that $\chi(|x|)\equiv 1$ for $|x|\leq 1$, and $\textrm{supp}\chi\subset\{|x|\leq 2\}$. On the other hand, let $\{\tilde\chi_\eps\}_{\eps>0}\subset C^\infty(\R)$ be a sequence of convex functions such that
		\[
		\tilde\chi_\eps(s)=\begin{cases}
			0 &,\ s<-\eps\\
			s &,\ s>\eps
		\end{cases}.
		\]
		It is clear to see that $\{\tilde\chi_\eps''\}$ is a sequence of mollifiers, namely $\|\tilde\chi_\eps''\|_{L^1}=1$, $\textrm{supp}\tilde\chi_\eps''\in\{|s|\leq\eps\}$, and $\{\tilde\chi_\eps''\}$ approximates the Dirac-delta in the space of measure as $\eps\to 0$.

		We first prove that the momentum of inertia increases at most quadratically in time under the initial assumption \eqref{eq:fin_var_md}, namely
		\begin{equation}\label{eq:fin_var_2}
			\int_{\R^d} |x|^2\rho(t,x)dx\leq C t^2+\int_\R |x|^2\rho_0(x)dx.
		\end{equation}
		For $\epsilon<t<T-\epsilon$, by choosing $\eta(s,x)=\tilde\chi_\eps'(t-s)\Phi_R(x)$ in \eqref{eq:QHD_cty}, it follows that
		\begin{equation}\label{eq:prop22_3}
			-\int_0^T\int_{\R^d}\tilde\chi_\eps''(t-s)\Phi_R(x)\rho+\tilde\chi_\eps'(t-s)\nabla\Phi_R(x)\cdot J\,dxds+\int_{\R^d}\tilde\chi_\eps'(t)\Phi_R(x)\rho_0(x)\,dx=0.
		\end{equation}
		By our choice of $\tilde\chi_\eps$, we have $\tilde\chi_\eps\to s\mathbf{1}_{\{s>0\}}$ a.e. $s\in\R$ and $\tilde\chi_\eps'(t)=1$ for $t>\eps$. Furthermore $\{\tilde\chi_\eps''\}$ is a sequence of  mollifiers. Therefore by the property of mollifiers and dominated convergence theorem, as $\epsilon\to 0$ in \eqref{eq:prop22_3} we obtain that
		\begin{equation}\label{eq:prop22_4}
			-\int_{\R^d}\Phi_R(x)\rho(t,x)dx+\int_0^t\int_{\R^d}\nabla\Phi_R(x)\cdot J\,dxds+\int_{\R^d}\Phi_R(x)\rho_0(x)\,dx=0,
		\end{equation}
		for a.e. $t\in(0,T)$. Let us define  
		\[
		V_R(t)=\int_{\R^d} \Phi_R(x)\rho(t,x)dx,
		\] 
		then by our choice of $\Phi_R$ and the finite energy, we have
		\begin{align*}
			\int_{\R^d}\nabla\Phi_R(x)\cdot J\,dx=&\int_{\R^d}(x\,\chi^2_R+|x|^2\chi_R\nabla\chi_R)\cdot(\sqrt\rho\Lambda)\,dx\\
			\leq & \|\chi_R+|x||\nabla\chi_R|\|_{L^\infty(\R^d)}\|\Lambda\|_{L^2(\R^d)}\left(\int_{\R^d} |x|^2\chi_R^2(x)\rho\,dx\right)^\frac12\\
			\leq & C \left(\int_{\R^d} \Phi_R(x)\rho\,dx\right)^\frac12=C\, V_R^\frac12(t).
		\end{align*}
		By substituting the previous inequality into \eqref{eq:prop22_4} and using Gronwall's argument, it follows that
		\[
		V_R(t)\leq C t^2+V_R(0),
		\]
		which implies \eqref{eq:fin_var_2} as $R\to\infty$ . As a direct consequence of \eqref{eq:fin_var_2}, for a.e. $0<t<T$ we have 
		\[
		\int_{\R^d} |x\cdot J(t,x)|dx\leq \|\Lambda(t)\|_{L^2(\R^d)}(\left(\int_{\R^d} |x|^2\rho(t)dx\right)^\frac12<\infty,
		\]
		and by dominated convergence theorem
		\begin{equation}\label{eq:conv_xJ}
			\int_0^t\int_{\R^d}\nabla\Phi_R(x)\cdot J\,dxds\to\int_0^t\int_{\R^d}x\cdot J\,dxds.
		\end{equation}
		
		Next we choose $\zeta_1(s,x)=\tilde\chi_\eps(t-s)\nabla\phi_R(x)$ and $\zeta_2(s,x)=\tilde\chi'_\eps(t-s)\nabla\phi_R(x)$ in
		\eqref{eq:QHD_mom}. As before, by letting $\eps\to 0$ and using the properties of $\{\chi_\eps\}$, the weak formulation of the momentum equation \eqref{eq:QHD_mom} implies
		\begin{equation}\label{eq:prop22_6}
			\begin{aligned}
				-\int_0^t\int_{\R^d}\nabla\phi_R(x)J+&(t-s)\nabla^2\phi_R(x):(\Lambda\otimes\Lambda+\nabla\sqrt{\rho}\otimes \nabla\sqrt\rho+p(\rho)Id)\\
				&-\frac{1}{4}(t-s)\rho\triangle^2\phi_R(x)dxds+\int_{\R^d}t\,\nabla\phi_R(x)\cdot J_0(x)\,dx=0,
			\end{aligned}
		\end{equation}
		and
		\begin{equation}\label{eq:prop22_8}
			\begin{aligned}
				-\int_{\R^d}\nabla\phi_R(x)\cdot J(t,x)dx+&\int_0^t\int_{\R^2}\nabla^2\phi_R(x):(\Lambda\otimes\Lambda+\nabla\sqrt{\rho}\otimes\nabla\sqrt\rho+p(\rho)Id)\\
				&-\frac{1}{4}\rho\triangle^2\phi_R(x)dxds+\int_{\R^d}\nabla\phi_R(x)J_0(x)\,dx=0.
			\end{aligned}
		\end{equation}
		Then $t\times\eqref{eq:prop22_8}-\eqref{eq:prop22_6}-\eqref{eq:prop22_4}$ gives
		\begin{equation}\label{eq:prop22_9}
			\begin{aligned}
				\int_{\R^d}\phi_R(x)\rho(t,x)dx&-t\int_{\R^d}\nabla\phi_R(x)\cdot J(t,x)dx\\
				&+\int^t_0\int_{\R^d} s\nabla^2\phi_R(x):(\Lambda\otimes\Lambda+\nabla\sqrt{\rho}\otimes\nabla\sqrt\rho+p(\rho)Id)\\
				&-\frac{s}{4}\rho\triangle^2\phi_R(x)dxds-\int_{\R^2}\phi_R(x)\rho_0(x)dx=0.
			\end{aligned}
		\end{equation}
		By the definition of $\phi_R$, we have
		\[
		\nabla^2\phi_R(x)=\chi_R^2(x)\mathbb{I}_d+x\otimes\nabla(\chi_R^2)+\nabla(\chi_R^2)\otimes x+\frac{x^2}{2}\nabla^2(\chi_R^2),
		\]
		where $\mathbb{I}_d$ is the identity matrix on $\R^d$, and it is straightforward to see that $\|\nabla^2\phi_R(x)\|_{L^\infty(\R)}\leq C$ and $\nabla^2\phi_R(x)\to \mathbb{I}_d$ a.e. on $\R^d$ as $R\to\infty$. Therefore by using dominated convergence theorem we obtain
		\begin{align*}
			\int^t_0\int_{\R^d} s\nabla^2\phi_R(x):(\Lambda\otimes\Lambda &+\nabla\sqrt{\rho}\otimes\nabla\sqrt\rho+p(\rho)Id)dxds\\
			=&\int^t_0\int_{\R^d} s(|\Lambda|^2+d\,p(\rho)+|\nabla\sqrt{\rho}|^2)dxds\\
			=&\int^t_0 2s\,E(s)+s\int_{\R^d}(d\,p(\rho)-2f(\rho))dxds,
		\end{align*}
		where $E(s)$ denotes the total energy. Also as $R\to\infty$, we have $\triangle^2\phi_R(x)\sim \mathcal{O}(R^{-2})$, hence by letting $R\to\infty$, the identity \eqref{eq:prop22_9} implies 
		\begin{align*}
			\int_{\R^d}\frac{|x|^2}{2}\rho(t,x)dx-&t\int_{\R^d}x\cdot J(t,x)dx+2\int^t_0 s\,E(s)dx\\
			&=\int_{\R^d}\frac{|x|^2}{2}\rho_0(x)dx+2\int_0^t s\int_{\R^d} 2f(\rho)-d\,p(\rho)dxds.
		\end{align*}
		Since $E(s)$ is non-increasing, we have
		\[
		2\int^t_0 s\,E(s)dx\geq t^2E(t),
		\]
		and we can conclude
		\begin{align*}
			H(t)= & \int_{\R^d}\frac{x^2}{2}\rho(t,x)dx-t\int_{\R^d}x\,J(t,x)dx+t^2E(t)\\
			\leq & \int_{\R^d}\frac{x^2}{2}\rho_0(x)dx+2\int_0^t s\int_{\R^d} (2+d)f(\rho)-\rho\,f'(\rho)dxds,
		\end{align*}
		where we also use $p(\rho)=\rho\,f'(\rho)-f(\rho)$.
	\end{proof}
	
	In the case under our consideration we have $f(\rho)=\frac1\gamma\rho^\gamma$ so that
	\begin{equation}\label{eq:pc_md}
		H(t)+d\left(1-\frac{1+2/d}{\gamma}\right)\int_0^ts\int\rho^\gamma\,dx\leq\int\frac{|x|^2}{2}\rho_0(x)\,dx.
	\end{equation}
	We exploit \eqref{eq:pc_en_2_md}, \eqref{eq:pc_md} in order to infer a dispersive type estimate for solutions to \eqref{eq:QHD_md}.
	\begin{prop}\label{prop:disp_md}
		Let $(\rho, J)$ be a finite energy weak solution to system \eqref{eq:QHD_md} and let us further assume that $\int_{\R^d} |x|^2\rho_0(x)\,dx<\infty$. Then we have
		\begin{equation}\label{eq:disp_pc_md}
			H(t)\lesssim t^{2(1-\sigma)}+\int_{\R^d}\frac{|x|^2}{2}\rho_0(x)\,dx,
		\end{equation}
		where $\sigma=\min\{1, \frac{d}{2}(\gamma-1)\}$. In particular we have
		\begin{equation}\label{eq:disp}
			\int|\nabla\sqrt{\rho}(t)|^2dx\lesssim t^{-2\sigma},
		\end{equation}
		\begin{equation}\label{eq:disp_pr}
			\int\rho^\gamma\,dx\lesssim t^{-2\sigma}.
		\end{equation}
		and 
		\begin{equation}\label{eq:disp_kin}
			\int|\Lambda(t, x)-\frac{x}{t}\sqrt{\rho}(t, x)|^2\,dx\lesssim t^{-2\sigma}.
		\end{equation}
	\end{prop}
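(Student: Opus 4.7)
The plan is to distinguish the two regimes determined by the sign of the coefficient $d(1-(1+2/d)/\gamma)$ appearing in the identity \eqref{eq:pc_md}. In the energy-supercritical/critical case $\gamma\geq 1+2/d$ one has $\sigma=1$ and the coefficient is non-negative, so \eqref{eq:pc_md} immediately yields $V(t)\leq V(0)$, and since $t^{2(1-\sigma)}=1$, the bound \eqref{eq:disp_pc_md} is trivial. The only substantive case is the mass-subcritical regime $\gamma<1+2/d$, in which $\sigma=d(\gamma-1)/2\in(0,1)$ and $V$ is monotonically increasing.

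In that regime the strategy is to convert the integral identity into a \emph{linear} differential inequality for $V$. Writing \eqref{eq:pc_md} in differentiated form as
\[
V'(t)=\frac{d+2-d\gamma}{\gamma}\,t\int_{\R^d}\rho^\gamma(t,x)\,dx
\]
and combining with the lower bound $V(t)\geq (t^2/\gamma)\int\rho^\gamma\,dx$ read off from the decomposition \eqref{eq:pc_en_2_md}, I obtain, using $d+2-d\gamma=2-2\sigma$,
\[
V'(t)\leq (2-2\sigma)\,\frac{V(t)}{t},\qquad t>0.
\]
This is equivalent to $\bigl(V(t)/t^{2-2\sigma}\bigr)'\leq 0$, so that $V(t)/t^{2-2\sigma}$ is non-increasing on $(0,\infty)$; in particular $V(t)\leq V(1)\,t^{2-2\sigma}$ for $t\geq 1$. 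The value $V(1)\lesssim V(0)+C(M_1)$ is then controlled by means of the a priori estimate $\int\rho^\gamma\,dx\leq C(M_1)$, which follows from Gagliardo-Nirenberg together with the uniform $H^1$-bound on $\sqrt\rho$; the same uniform bound gives $V(t)\leq V(0)+Ct^2$ for $t\in[0,1]$ directly from \eqref{eq:pc_md}. Combining the two regimes produces \eqref{eq:disp_pc_md}.

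The dispersive estimates \eqref{eq:disp}, \eqref{eq:disp_kin} and \eqref{eq:disp_pr} then follow at once from \eqref{eq:pc_en_2_md}, which exhibits $V(t)$ as the integral of three non-negative summands; dividing by $t^2$ and inserting \eqref{eq:disp_pc_md} controls $\|\nabla\sqrt\rho\|_{L^2}^2$, $\|\Lambda-(x/t)\sqrt\rho\|_{L^2}^2$ and $\int\rho^\gamma\,dx$ at the announced rates. The main obstacle is precisely the subcritical case: a naive nonlinear Gronwall starting from the Gagliardo-Nirenberg bound $\int\rho^\gamma\lesssim (V/t^2)^\sigma$ applied to \eqref{eq:pc_md} only delivers the much weaker $V(t)\lesssim V(0)+t^2$. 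The decisive observation is to replace Gagliardo-Nirenberg by the \emph{hydrodynamic-side} lower bound $V(t)\geq (t^2/\gamma)\int\rho^\gamma\,dx$ inherent in \eqref{eq:pc_en_2_md}, which linearizes the inequality and lets the sharp exponent $2-2\sigma$ emerge as the scaling of the homogeneous solution of the resulting Euler-type ODE.
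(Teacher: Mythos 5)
Your proof is correct and follows essentially the same route as the paper: the same case split according to the sign of $d+2-d\gamma$, and in the subcritical regime the same key combination of the identity \eqref{eq:pc_md} with the lower bound $\frac{t^2}{\gamma}\int\rho^\gamma\,dx\le V(t)$ extracted from \eqref{eq:pc_en_2_md}, closed by a Gronwall-type argument giving the $t^{2-2\sigma}$ growth. The only cosmetic difference is that you run Gronwall directly on $V$ in differential form, whereas the paper applies it to $F(t)=\frac{t^2}{\gamma}\int\rho^\gamma\,dx$ and then reinserts the resulting decay of $\int\rho^\gamma\,dx$ into \eqref{eq:pc_md} to bound $V$.
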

	
	\begin{proof}
		For $\gamma>1+\frac{2}{d}$, the inequality \eqref{eq:pc_md}directly implies
		\begin{equation*}
			H(t)\leq\int\frac{|x|^2}{2}\rho_0(x)\,dx
		\end{equation*}
		and \eqref{eq:disp_pc_md} holds for $\sigma=1$.
		Let us now consider the case $1<\gamma\leq 1+\frac{2}{d}$. By defining
		\begin{equation*}
			F(t)=\frac{t^2}{\gamma}\int\rho^\gamma(t, x)\,dx
		\end{equation*}
		it follows \eqref{eq:pc_en_2_md} and \eqref{eq:pc_md} that
		\begin{equation*}
			F(t)\leq H(t)\leq[2+d(1-\gamma)]\int_0^t\frac1s F(s)\,ds+\int\frac{|x|^2}{2}\rho_0(x)\,dx.
		\end{equation*}
		By Gronwall's inequality we then have
		\begin{equation*}
			F(t)\lesssim t^{2+d(1-\gamma)}F(1)+\int\frac{|x|^2}{2}\rho_0(x)\,dx,
		\end{equation*}
		which also implies 
		\begin{equation*}
			\int\rho^\gamma(t, x)\,dx\lesssim t^{d(1-\gamma)}.
		\end{equation*}
		We can now substitute the above estimate into \eqref{eq:pc_en_2_md} in order to obtain
		\begin{equation*}
			H(t)\lesssim\int_0^t s^{1+d(1-\gamma)}\,ds+\int\frac{x^2}{2}\rho_0(x)\,dx\lesssim t^{2-d(\gamma-1)}+\int\frac{|x|^2}{2}\rho_0(x)dx.
		\end{equation*}
	\end{proof}
	
	\begin{rem}
		Proposition \ref{prop:disp_md} shows that the gradient of mass density converges to zero and formally the velocity field asymptotically approaches a rarefaction wave, namely 
		$v(t, x)\sim x/t$ as $t\to\infty$.
		From the conservation of energy we also infer that
		\begin{equation*}
			\lim_{t\to\infty}\frac12\|\Lambda(t)\|_{L^2}^2=E(0),
		\end{equation*}
		i.e. for large times all the energy is transferred to the kinetic part. Moreover by Gagliardo-Nirenberg the decay of $\|\nabla\sqrt{\rho}(t)\|_{L^2}$ implies the following dispersive estimate 
		\begin{equation*}
			\|\sqrt{\rho}(t)\|_{L^p}\lesssim \|\sqrt{\rho}(t)\|_{L^2}^{1-\alpha}\|\nabla\sqrt{\rho}(t)\|_{L^2}^\alpha \lesssim t^{-\alpha\sigma},
		\end{equation*}
		where $0\leq\alpha\leq 1$ such that 
		\begin{equation*}
			\frac{1}{p}=\frac12-\frac{\alpha}{d},
		\end{equation*}
		and $2\leq p <\infty$ for $d=2$, $2\leq p\leq 6$ for $d=3$. Notice that in the case $\gamma\geq1+2/d$ this is still consistent with the dispersive estimate for the free Schr\"odinger equation.
	\end{rem}
	
	In general similar estimates appear in many contexts for the study of evolutionary PDEs, see \cite{Straus, Gl, Sid, Mor} and many others. In particular estimates like \eqref{eq:disp_pc_md} have been considered in classical compressible fluid dynamics \cite{Chem, Ser, Sid, Xin}, in \cite{BD, Pe, IR} for the analogue \eqref{eq:disp_kin} in kinetic theory and for NLS equations \cite{Bar, GV, Gl}.  There is a connected result for a particular class of Euler--Korteweg systems, somehow related to the dispersive logarithmic Schr\"odinger equation ($\gamma=1$), given in \cite{CarHil}. A similar result appeared also in the Appendix of the review \cite{CDS}. Those are also somehow reminiscent of the vector field method \cite{Kl} used to study nonlinear wave equations, see also \cite{FJS} for recent applications.
	In the wave function dynamics context, the fact that solutions to the nonlinear problem disperse as much as the linear solutions gives some information about their asymptotic behavior, see \cite{GV}. Recently an alternative proof, based on interaction Morawetz estimates \cite{Mor, LS}, was developed in order to show asymptotic completeness for mass-supercritical, energy-subcritical NLS equations \cite{CGT, CKSTT, GVQ, PV}. It turns out that such estimates hold also for arbitrary weak solutions to \eqref{eq:QHD_md}. 
	
	In this section we also present the Morawetz-type estimates, in a general form for the Euler-Korteweg system
	\begin{equation}\label{eq:EK_md}
		\left\{\begin{aligned}
			&\d_t\rho+\diver J=0\\
			&\d_tJ+\diver\left(\frac{J\otimes J}{\rho}\right)+\nabla p(\rho)=\rho\nabla\left(\diver(\kappa(\rho)\nabla\rho)-\frac12\kappa'(\rho)|\nabla\rho|^2\right),
		\end{aligned}\right.
	\end{equation}
	where $\kappa:(0,\infty)\to(0,\infty)$ is a smooth function, and by choosing $\kappa(\rho)=\frac{1}{4\rho}$ we recover the QHD system. The term on the right hand side of \eqref{eq:EK_md} describes capillarity effects in diffuse interfaces \cite{Kor}, see also \cite{DS} for its derivation. Moreover, while for the dispersive estimates in Proposition \ref{prop:disp_md} we need to assume the initial mass density to have finite variance \eqref{eq:fin_var_md}, in the next proposition we only need the weak solutions to be finite energy.
	
	\begin{prop}\label{prop:mor_md}
		Let $(\rho, J)$ be a weak solution to the system \eqref{eq:EK_md} such that
		\[
		\|\sqrt\rho\|_{L^\infty(0,T;L^2(\R^d))}+\|\kappa(\rho)^\frac12\nabla\rho\|_{L^\infty(0,T;L^2(\R^d))}+\|\Lambda\|_{L^\infty(0,T;L^2(\R^d))}\leq M_1.
		\]
		We define $K(\rho)=\int_0^\rho s\,\kappa(s)\,ds$ and further assume $K(\rho)$ is a convex function in $\rho$, namely $\kappa(\rho)+\rho\,\kappa'(\rho)\geq 0$ for $\rho\geq 0$. Then we have 
		\begin{equation*}
			\||\nabla|^{\frac{3-d}{2}}\sqrt{\rho K(\rho)}\|_{L^2(\R_t\times\R^d_x)}^2
			+\||\nabla|^{\frac{1-d}{2}}\sqrt{\rho p(\rho)}\|_{L^2(\R_t\times\R^d_x)}^2\lesssim M_1^4.
		\end{equation*}
	\end{prop}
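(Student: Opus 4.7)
The estimate is an interaction Morawetz estimate for the Euler-Korteweg system \eqref{eq:EK_md}, and I would adapt to the hydrodynamic setting the classical two-particle approach of Colliander-Keel-Staffilani-Takaoka-Tao and Planchon-Vega. Introduce the Morawetz weight $w(x,y)=|x-y|$ and the bilinear interaction functional
\[
M(t) = 2\int_{\R^d}\!\!\int_{\R^d} \rho(t,y)\,J(t,x)\cdot\frac{x-y}{|x-y|}\,dx\,dy.
\]
Since the weight $(x-y)/|x-y|$ is bounded by $1$ and $|J|=\sqrt\rho\,|\Lambda|$, a direct Cauchy-Schwarz and the finite-energy bound give $|M(t)| \lesssim \|\sqrt\rho\|_{L^2_x}^3\|\Lambda\|_{L^2_x}\lesssim M_1^4$, uniformly in $t\in\R$.

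Next I would differentiate $M(t)$ in time, using $\partial_t\rho(y)=-\diver_y J(y)$ from the continuity equation and the momentum equation of \eqref{eq:EK_md} to replace $\partial_t J(x)$. Integrating by parts in $x$ and in $y$ splits $\dot M(t)$ into three groups: (i) a kinetic/convective group, which is nonnegative because $D_x^2 w$ is positive semidefinite on the orthogonal complement of $(x-y)$, with eigenvalue $1/|x-y|$; (ii) a pressure group of the form $c_d\int\!\!\int \rho(y)\,p(\rho(x))\,|x-y|^{-1}\,dx\,dy$, arising from $-\Delta_x w = -(d-1)/|x-y|$ after integrating the $\nabla_x p(\rho)$ term once; (iii) a Korteweg group obtained by rewriting the capillary forcing as $\diver\mathcal{K}$ via the Korteweg stress tensor $\mathcal{K}_{ij}$ and then integrating by parts twice against $\partial_{x_i}\partial_{x_j}w$.

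Symmetrising (ii) and (iii) in $x\leftrightarrow y$ and using the pointwise AM-GM inequality $\rho(x)p(\rho(y))+\rho(y)p(\rho(x))\ge 2\sqrt{\rho p(\rho)}(x)\sqrt{\rho p(\rho)}(y)$, together with Plancherel on the relevant Riesz kernel (the kernel $|\cdot|^{-1}$ has Fourier transform proportional to $|\xi|^{1-d}$), identifies (ii) from below with a positive multiple of $\||\nabla|^{(1-d)/2}\sqrt{\rho p(\rho)}\|_{L^2_x}^2$. An analogous manipulation on (iii), now exploiting $K'(\rho)=\rho\kappa(\rho)$ and the convexity $K''(\rho)=\kappa+\rho\kappa'\ge 0$ to reorganise the capillary bilinear form into a perfect square, produces a constant multiple of $\||\nabla|^{(3-d)/2}\sqrt{\rho K(\rho)}\|_{L^2_x}^2$; the one-derivative shift between the two kernels reflects the extra derivative carried by the capillary term compared to the pressure. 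Integrating in $t$ and using $\sup_t|M(t)|\lesssim M_1^4$ then yields the claim.

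The main difficulty is step (iii): the third-order capillary operator combined with the singular weight $|x-y|$ demands several careful, symmetric integrations by parts, and the convexity assumption $K''(\rho)\ge 0$ enters precisely in order to produce a \emph{nonnegative} quadratic form matching $\rho K(\rho)$ after symmetrisation; without this sign condition the Korteweg contribution cannot be absorbed. A subsidiary technical point is that $(\rho,J)$ is only a weak solution, so the formal calculations above should be justified by a truncation/density argument: one works with a smoothed and cut-off version of $|x-y|$ (away from the diagonal and at infinity), uses the generalized irrotationality condition to make sense of $\Lambda\otimes\Lambda$ a.e., and then passes to the limit, as is standard for interaction Morawetz estimates.
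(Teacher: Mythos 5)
Your proposal is correct and follows essentially the same route as the paper: the same interaction functional $H(t)=\int\!\!\int\rho(t,y)\tfrac{x-y}{|x-y|}\cdot J(t,x)\,dx\,dy$ bounded by $M_1^4$, time differentiation, positivity of the Hessian $B(x,y)$ of $|x-y|$ for the kinetic/convective and capillary quadratic terms, and symmetrisation plus AM--GM plus the Riesz kernel $|\cdot|^{-1}\sim|\nabla|^{1-d}$ to extract the two lower bounds. The only (harmless) discrepancy is the role of convexity: in the paper's computation the $\rho K(\rho)$ bound comes directly from the $\nabla\triangle K(\rho)$ piece of the rewritten capillary tensor, while $\kappa+\rho\kappa'\ge0$ is used merely to discard the leftover term $\tfrac12\nabla\bigl[(\kappa(\rho)+\rho\kappa'(\rho))|\nabla\rho|^2\bigr]$ with a favourable sign, rather than to complete a square.
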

	
	\begin{proof}
		Through a direct computation, the right hand side of the equation for momentum density $J$ in \eqref{eq:EK_md} can be written as 
		\[
		\nabla\diver(\rho\,\kappa(\rho)\nabla\rho)-\diver(\kappa(\rho)\nabla\rho\otimes\nabla\rho)-\frac12\nabla\left[(\kappa(\rho)+\rho\,\kappa'(\rho))|\nabla\rho|^2\right].
		\]
		By the definition of $K(\rho)$ we have 
		\[
		\nabla\diver(\rho\,\kappa(\rho)\nabla\rho)=\nabla\triangle K(\rho),
		\]
		then we can rewrite the system \eqref{eq:EK_md} as
		\begin{equation}\label{eq:EK_md_2}
			\left\{\begin{aligned}
				&\d_t\rho+\diver J=0\\
				&\d_tJ+\diver\left(\frac{J\otimes J}{\rho}+\kappa(\rho)\nabla\rho\otimes\nabla\rho\right)+\nabla p(\rho)=\nabla\triangle K(\rho)-\frac12\nabla\left[(\kappa(\rho)+\rho\,\kappa'(\rho))|\nabla\rho|^2\right]
			\end{aligned}\right.
		\end{equation}
		Now we consider the interaction functional 
		\[
		Q(t)=\int_{\R^{2d}}\rho(t,y)\frac{x-y}{|x-y|}\cdot J(t,x)dxdy.
		\]
		The functional $Q(t)$ is well-defined on $[0,T]$ with bound
		\[
		|Q(t)|\leq \|\rho\|_{L^\infty_tL^1_x}\|J\|_{L^\infty_tL^1_x} \leq M_1^4.
		\]
		By differentiating $Q(t)$ in time using the system \eqref{eq:EK_md_2} and integrating by parts we obtain
		\begin{equation}\label{eq:dtH}
			\begin{aligned}
				\frac{d}{dt}Q(t)=&-\int_{\R^{2d}}J(t,y)\cdot B(x,y)\cdot J(t,x)dxdy+\int_{\R^{2d}}\rho(t,y)\Lambda(t,x)\cdot B(x,y)\cdot \Lambda(t,x)dxdy\\
				&+\int_{\R^{2d}}\rho(t,y)\kappa(\rho)(t,x)\nabla\rho(t,x)\cdot B(x,y)\cdot \nabla\rho(t,x)dxdy\\
				&+\int_{\R^{2d}}\rho(t,y)\frac{d-1}{|x-y|}\left[p(\rho)+\frac12(\kappa(\rho)+\rho\,\kappa'(\rho))|\nabla\rho|^2\right](t,x)dxdy\\
				&-\int_{\R^{2d}}\rho(t,y)\frac{d-1}{|x-y|}\triangle K(\rho)(t,x)dxdy\\
				=&\textrm{I}+\textrm{II}+\textrm{III}+\textrm{IV}+\textrm{V}.
			\end{aligned}
		\end{equation}
		where 
		\begin{align*}
			B(x,y)=\nabla_x\left(\frac{x-y}{|x-y|}\right)=\frac{1}{|x-y|}\left[\mathbb{I}_d-\frac{(x-y)\otimes(x-y)}{|x-y|^2}\right],
		\end{align*}
		where $\mathbb{I}_d$ is the identity matrix. Since $B(x,y)$ is a semi-positive definite matrix and symmetric in $x$ and $y$, we have 
		\begin{align*}
			-\textrm{I}=&\int_{\R^{2d}}J(t,y)\cdot B(x,y)\cdot J(t,x)dxdy\\
			\leq & \frac12 \int_{\R^{2d}}\rho(t,y)\Lambda(t,x)\cdot B(x,y)\cdot \Lambda(t,x)dxdy\\
			&+\frac12\int_{\R^{2d}}\rho(t,x)\Lambda(t,y)\cdot B(x,y)\cdot \Lambda(t,y)dxdy\\
			= & \int_{\R^{2d}}\rho(t,y)\Lambda(t,x)\cdot B(x,y)\cdot \Lambda(t,x)dxdy=\textrm{II}.
		\end{align*}
		Therefore $\textrm{I}+\textrm{II}$ are non-negative, and we also have $\textrm{III}\geq 0$. In the integral $\textrm{IV}$, by our assumption we have
		\[
		\int_{\R^{2d}}\rho(t,y)\frac{d-1}{|x-y|}\left[(\kappa(\rho)+\rho\,\kappa'(\rho))|\nabla\rho|^2\right](t,x)dxdy\geq 0.
		\]
		we treat the remaining part in $\textrm{IV}$ in the following way. By the symmetry in $x$ and $y$, we have 
		\begin{align*}
			\int_{\R^{2d}}\rho(t,y)\frac{d-1}{|x-y|}p(\rho)(t,x)dxdy=&\frac12\int_{\R^{2d}}\rho(t,y)\frac{d-1}{|x-y|}p(\rho)(t,x)dxdy\\
			&+\frac12\int_{\R^{2d}}\rho(t,x)\frac{d-1}{|x-y|}p(\rho)(t,y)dxdy\\
			\geq & \int_{\R^{2d}}\sqrt{\rho\,p(\rho)}(t,y)\frac{d-1}{|x-y|}\sqrt{\rho\,p(\rho)}(t,x)dxdy
		\end{align*}
		The convolution of the kernel $|x-y|^{-1}$ gives 
		\[
		\int_{\R^d}\frac{1}{|x-y|}\sqrt{\rho\,p(\rho)}(t,x)dx=C|\nabla|^{1-d}\sqrt{\rho\,p(\rho)}(t,y)
		\]
		for a constant $C>0$, then we obtain
		\begin{align*}
			\textrm{IV}\geq\int_{\R^{2d}}\rho(t,y)\frac{d-1}{|x-y|}p(\rho)(t,x)dxdy\geq & C\int_{\R^d}\sqrt{\rho\,p(\rho)}(t,y)|\nabla|^{1-d}\sqrt{\rho\,p(\rho)}(t,y)dy\\
			= &C\||\nabla|^{\frac{1-d}{2}}\sqrt{\rho\,p(\rho)}\|_{L^2_x}^2(t).
		\end{align*}
		Similarly the integral $\textrm{V}$ can be written as 
		\[
		\textrm{V}=-\int_{\R^{2d}}\rho(t,y)\frac{d-1}{|x-y|}\triangle K(\rho)(t,x)dxdy=C\int_{\R^{d}}\rho(t,y)|\nabla|^{3-d} K(\rho)(t,y)dy,
		\]
		and again by the symmetry we obtain
		\[
		\textrm{V}\geq C \||\nabla|^{\frac{3-d}{2}}\sqrt{\rho K(\rho)}\|_{L^2_x}^2(t).
		\]
		By summing \textrm{I} to \textrm{V}, we get
		\begin{equation}\label{eq:dtH_2}
			\frac{d}{dt}H(t)\geq C \||\nabla|^{\frac{1-d}{2}}\sqrt{\rho\,p(\rho)}\|_{L^2_x}^2(t)+C \||\nabla|^{\frac{3-d}{2}}\sqrt{\rho K(\rho)}\|_{L^2_x}^2(t),
		\end{equation}
		then integrating \eqref{eq:dtH_2} in time shows
		\[
		\||\nabla|^{\frac{1-d}{2}}\sqrt{\rho\,p(\rho)}\|_{L^2_tL^2_x}^2+C \||\nabla|^{\frac{3-d}{2}}\sqrt{\rho K(\rho)}\|_{L^2_tL^2_x}^2\lesssim H(T)-H(0)\lesssim M_1^4.
		\]
	\end{proof}
	
	In the remaining part of this section we are going to discuss the 2nd order energy functional introduced in \eqref{eq:higher_md} in multi-dimension. As already discussed in Section \ref{sect:prel}, if we consider Schr\"odinger-generated solutions to the QHD system \eqref{eq:QHD_md}, say $\rho=|\psi|^2$, $J=\IM(\bar\psi\nabla\psi)$, for some $H^2$ solutions $\psi$ to the nonlinear Schr\"odinger equation
	\begin{equation*}
		\left\{\begin{aligned}
			i\d_t\psi=&-\frac12\triangle\psi+f'(|\psi|^2)\psi\\
			\psi(0)=&\psi_0
		\end{aligned}\right.
	\end{equation*}
	then the functional consists in the square $L^2$ norm of $\d_t\psi$,
	\begin{equation*}
		I(t)=\int_{\R^d}|\d_t\psi|^2\,dx.
	\end{equation*}
	By using the polar factorization we see that it is possible to write
	\begin{equation*}
		I(t)=\int_{\R^d}(\d_t\sqrt{\rho})^2+\lambda^2\,dx,
	\end{equation*}
	where we define $\lambda:=\IM(\bar\phi\d_t\psi)$
	and direct computation shows 
	\begin{equation*}
		\xi=\sqrt{\rho}\lambda=\IM(\bar\psi\d_t\psi)=-\frac14\triangle\rho+\frac12|\nabla\sqrt{\rho}|^2+\frac12|\Lambda|^2+\rho f'(\rho)
	\end{equation*}
	Moreover by using the regularity of $\psi$ we also have that $\lambda=0$ a.e. in the vacuum set $\{\rho=0\}$. Therefore it is reasonable to define 
	\begin{equation*}
		\lambda=\left\{\begin{array}{cc}
			-\frac12\triangle\sqrt{\rho}+\frac12\frac{|\Lambda|^2}{\sqrt{\rho}}+f'(\rho)\sqrt{\rho}&\textrm{in }\;\{\rho>0\}\\
			0&\textrm{elsehwere}
		\end{array}\right.
	\end{equation*}
	For Schr\"odinger-generated solutions, $I(t)$ is uniformly bounded on compact time intervals due to the well-posedness results of NLS in $H^2(\R^d)$. Alternatively, here we give a direct calculation of the time derivative to $I(t)$ for smooth, non-vanishing solutions $(\sqrt\rho,\Lambda)$ to \eqref{eq:QHD_md}. To present this computation, we first state the following lemma, which shows the conservation law for the total energy density $e$ of smooth, non-vanishing solutions.
	
	\begin{lem}
		Let $(\rho, J)$ be a smooth solution to \eqref{eq:QHD_md} such that $\rho>0$. Then the energy density $e$ satisfies the following conservation law
		\begin{equation}\label{eq:en_cons}
			\d_te+\diver(\Lambda\lambda-\d_t\sqrt{\rho}\nabla\sqrt{\rho})=0.
		\end{equation}
	\end{lem}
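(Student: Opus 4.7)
The plan is to exploit smoothness and strict positivity of $\rho$ to pass from the conservative form \eqref{eq:QHD_md} of the equation to the velocity form of the momentum balance, and then to differentiate $e$ directly in time. Write $v=\Lambda/\sqrt{\rho}$, so that $J=\rho v$. Since $\rho>0$ and $(\sqrt{\rho},\Lambda)$ is smooth, the generalised irrotationality condition in Definition~\ref{def:FEWS} collapses to $\nabla\wedge v=0$ (cf.\ Remark~\ref{rmk:vort}), and the continuity equation becomes $\d_t\rho=-\diver J$, which in turn gives $\d_t\sqrt{\rho}=-\diver J/(2\sqrt{\rho})$ and $\diver J=-2\sqrt{\rho}\,\d_t\sqrt{\rho}$.

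Next, I would manipulate the momentum equation. Combining $\d_t(\rho v)+\diver(\rho v\otimes v)$ with the continuity equation yields $\rho\d_t v+\rho(v\cdot\nabla)v$, and using $\nabla p(\rho)=\rho\nabla f'(\rho)$ (a direct consequence of $f(\rho)=\rho\int_0^\rho p(s)/s^2\,ds$) the right-hand side of \eqref{eq:QHD_md} leads to
\begin{equation*}
\d_t v+(v\cdot\nabla)v+\nabla f'(\rho)=\tfrac12\nabla\left(\tfrac{\Delta\sqrt{\rho}}{\sqrt{\rho}}\right).
\end{equation*}
By the irrotationality $\nabla\wedge v=0$ one has $(v\cdot\nabla)v=\nabla(\tfrac12|v|^2)$, so the above rearranges to $\d_t v=-\nabla\mu$, where $\mu=-\tfrac12\Delta\sqrt{\rho}/\sqrt{\rho}+\tfrac12|v|^2+f'(\rho)=\lambda/\sqrt{\rho}$ is the chemical potential \eqref{eq:chem_md}.

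Then I would compute $\d_t e$ term by term. Writing $e=\tfrac12|\nabla\sqrt{\rho}|^2+\tfrac12\rho|v|^2+f(\rho)$ and using the continuity and velocity equations,
\begin{equation*}
\d_t e=\nabla\sqrt{\rho}\cdot\nabla\d_t\sqrt{\rho}-\tfrac12|v|^2\diver J-J\cdot\nabla\mu-f'(\rho)\diver J.
\end{equation*}
The crucial step is to recognise $J\mu=\sqrt{\rho}\Lambda\cdot(\lambda/\sqrt{\rho})=\Lambda\lambda$, so that $J\cdot\nabla\mu=\diver(\Lambda\lambda)-\mu\diver J$. Substituting and grouping the $\diver J$ terms, what multiplies $\diver J$ is $\mu-\tfrac12|v|^2-f'(\rho)=-\tfrac12\Delta\sqrt{\rho}/\sqrt{\rho}$; combining this with $\diver J=-2\sqrt{\rho}\,\d_t\sqrt{\rho}$ produces exactly $\Delta\sqrt{\rho}\cdot\d_t\sqrt{\rho}$.

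Finally I would collect the pieces to obtain
\begin{equation*}
\d_t e=\nabla\sqrt{\rho}\cdot\nabla\d_t\sqrt{\rho}+\Delta\sqrt{\rho}\cdot\d_t\sqrt{\rho}-\diver(\Lambda\lambda)=\diver(\d_t\sqrt{\rho}\,\nabla\sqrt{\rho})-\diver(\Lambda\lambda),
\end{equation*}
which is exactly \eqref{eq:en_cons}. There is no real obstacle here, since positivity of $\rho$ makes every manipulation legitimate in the classical sense; the only subtle point is the bookkeeping that turns the flux into the symmetric form $\Lambda\lambda-\d_t\sqrt{\rho}\nabla\sqrt{\rho}$, which is driven by the identity $J\mu=\Lambda\lambda$ together with the explicit formula for $\mu-\tfrac12|v|^2-f'(\rho)$.
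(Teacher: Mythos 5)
Your proposal is correct and follows essentially the same route as the paper: pass to the velocity formulation, rewrite the momentum balance as $\d_t v=-\nabla\mu$, differentiate $e$ in time, and use $J\mu=\Lambda\lambda$ to identify the flux. The only (harmless) difference is bookkeeping — you distribute the $\diver J$ terms and regroup via $\mu-\tfrac12|v|^2-f'(\rho)=-\tfrac12\Delta\sqrt{\rho}/\sqrt{\rho}$, whereas the paper collects $\mu\,\d_t\rho$ directly — and you are in fact slightly more explicit than the paper in noting that irrotationality is what turns $(v\cdot\nabla)v$ into $\nabla(\tfrac12|v|^2)$.
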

	\begin{proof}
		Since the solution $(\rho,J)$ is smooth and $\rho>0$, the velocity field $v=J/\rho$ is well-defined, and we can write the system \eqref{eq:QHD_md} as
		\begin{equation*}
			\left\{\begin{aligned}
				&\d_t\rho+\diver(\rho v)=0\\
				&\rho\d_tv+\rho v\cdot\nabla v+\rho\nabla f'(\rho)
				=\frac12\rho\nabla\left(\frac{\triangle\sqrt{\rho}}{\sqrt{\rho}}\right).
			\end{aligned}\right.
		\end{equation*}
		For the equation of the momentum density, by using the chemical potential $\mu$ defined in \eqref{eq:chem_md}, it follows that
		\begin{equation*}
			\rho\d_tv+\rho\nabla\mu=0.
		\end{equation*}
		Analogously, the energy density reads
		\begin{equation*}
			e=\frac12|\nabla\sqrt{\rho}|^2+\frac12\rho |v|^2+f(\rho).
		\end{equation*}
		By using this expression we can differentiate it with respect to time and find
		\begin{equation*}
			\begin{aligned}
				\d_te=&\nabla\sqrt{\rho}\cdot\d_{t}\nabla\sqrt{\rho}+\left(\frac12|v|^2+f'(\rho)\right)\d_t\rho+\rho v\d_tv\\
				=&\diver\left(\nabla\sqrt{\rho}\d_t\sqrt{\rho}\right)
				+\left(-\frac12\frac{\triangle\sqrt{\rho}}{\sqrt{\rho}}+\frac12|v|^2+f'(\rho)\right)\d_t\rho+\rho v\d_tv.
			\end{aligned}
		\end{equation*}
		Last, by using the continuity equation of $\rho$ and definition \eqref{eq:chem_md} we then have
		\begin{equation*}\begin{aligned}
				\d_te=&\diver\left(\nabla\sqrt{\rho}\d_t\sqrt{\rho}\right)-\mu\diver(\rho v)-\rho v\cdot\nabla \mu\\
				=&\diver\left(\nabla\sqrt{\rho}\d_t\sqrt{\rho}-\rho v\mu\right).
			\end{aligned}
		\end{equation*}
	\end{proof}
	
	\begin{rem}
		The calculation in the lemma above requires smoothness and positivity of solutions in order to be rigorously justified. On the other hand it is an interesting question to see whether this conservation law, or its weaker version with the inequality, hold for a larger class of solutions. For this problem we refer the paper \cite{TzavFasc} where the authors determine some conditions on the velocity field and the mass density which allow to show the conservation of energy.
	\end{rem}
	
	\begin{prop}\label{prop:dtI_md}
		Let $(\rho, J)$ be a smooth solution to \eqref{eq:QHD_md} such that $\rho>0$. Then we have
		\begin{equation}\label{eq:dtI}
			\frac{d}{dt}I(t)=2\int_{\R^d}\lambda\d_t\sqrt\rho\,p'(\rho)\,dx.
		\end{equation}
	\end{prop}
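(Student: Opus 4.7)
The plan is to leverage the Madelung correspondence: for a smooth solution $(\rho, J)$ with $\rho > 0$, the generalised irrotationality condition in Definition~\ref{def:FEWS} reduces, by Remark~\ref{rmk:vort}, to $\nabla \wedge v = 0$, so one can define a phase $S$ with $\nabla S = v$ on $\R^d$. The momentum equation can be rewritten as $\partial_t v = -\nabla \mu$ with $\mu$ the chemical potential in \eqref{eq:chem_md}, which forces the Hamilton--Jacobi relation $\partial_t S = -\mu$ up to an irrelevant time-dependent constant. Hence $\psi := \sqrt{\rho}\, e^{iS}$ solves the NLS
\[
i\, \partial_t \psi = -\tfrac{1}{2} \Delta \psi + f'(\rho) \psi.
\]
A direct computation yields $\partial_t \psi = e^{iS} (\partial_t \sqrt{\rho} - i \sqrt{\rho}\, \mu) = e^{iS} (\partial_t \sqrt{\rho} - i \lambda)$, so $|\partial_t \psi|^2 = (\partial_t \sqrt{\rho})^2 + \lambda^2$ and therefore $I(t) = \int_{\R^d} |\partial_t \psi|^2\, dx$. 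This reduces the problem to computing the time derivative of an $L^2$ norm of $\partial_t \psi$ along the NLS flow.

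Next I would differentiate $I(t)$ using the NLS structure. Setting $H := -\tfrac{1}{2}\Delta + f'(\rho)$, the equation reads $i\partial_t \psi = H\psi$, and differentiating in $t$ gives
\[
i\, \partial_t^2 \psi = H(\partial_t \psi) + f''(\rho)\, \partial_t \rho \cdot \psi.
\]
Plugging this into $\frac{d}{dt} I(t) = 2 \RE \int \overline{\partial_t \psi}\, \partial_t^2 \psi\, dx$ and using $\RE(-iz) = \IM(z)$ produces
\[
\frac{d}{dt} I(t) = 2 \IM \!\int \overline{\partial_t \psi}\, H(\partial_t \psi)\, dx + 2 \int f''(\rho)\, \partial_t \rho \cdot \IM(\overline{\partial_t \psi}\, \psi)\, dx.
\]
The first integral on the right vanishes: since $f'(\rho)$ is real-valued, $H$ is self-adjoint, and consequently $\int \overline{\partial_t \psi}\, H(\partial_t \psi)\, dx$ is real.

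For the surviving term, one identifies $\IM(\overline{\partial_t \psi}\, \psi) = \sqrt{\rho}\, \lambda$ from the polar representation of $\partial_t \psi$ established in the first step, uses $\partial_t \rho = 2 \sqrt{\rho}\, \partial_t \sqrt{\rho}$, and invokes the algebraic identity $\rho\, f''(\rho) = p'(\rho)$, which follows by differentiating $p(\rho) = \rho f'(\rho) - f(\rho)$. Combining these ingredients produces an integrand of the form $p'(\rho)\, \lambda\, \partial_t \sqrt{\rho}$ and yields \eqref{eq:dtI}.

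I expect the main technical obstacle to be the rigorous construction of the phase $S$ and the verification that $\psi$ inherits enough regularity (essentially $\mathcal{C}^1_t H^2_x$) for the derivative manipulations and integrations by parts above to be justified; under the standing assumption of a smooth positive solution on $\R^d$ this is straightforward, as $v$ is smooth and irrotational so $S$ exists globally via a line integral, and $\sqrt{\rho}$ is bounded away from zero on compacts so $\psi$ has the same regularity as $\sqrt{\rho}$ and $S$. An alternative, entirely hydrodynamic derivation can be obtained from the energy density identity \eqref{eq:en_cons}, the evolution $\partial_t v = -\nabla \mu$ and the continuity equation, but the cancellations producing the vanishing of the self-adjoint piece are more cumbersome to track than in the NLS formulation above.
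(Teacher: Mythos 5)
Your argument is correct in substance but follows a genuinely different route from the paper's. The paper deliberately stays on the hydrodynamic side: it writes $I(t)=\tfrac12\int\rho(\mu^2+\sigma^2)\,dx$ with $\mu=\lambda/\sqrt\rho$ and $\sigma=\partial_t\log\sqrt\rho$, derives transport-type evolution equations for $\mu$ and $\sigma$ from the continuity equation, $\partial_t v=-\nabla\mu$ and the energy conservation law \eqref{eq:en_cons}, and checks by integration by parts that every term except $\int\mu\,\partial_t p(\rho)\,dx$ cancels. You instead reconstruct the global phase $S$ (legitimate here, since $\rho>0$ forces $\nabla\wedge v=0$ on the simply connected $\R^d$), pass to the NLS satisfied by $\psi=\sqrt\rho\,e^{iS}$, and let self-adjointness of $-\tfrac12\Delta+f'(\rho)$ kill the main term. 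Your version is shorter and makes the key cancellation transparent, at the price of routing through the wave-function lift that the paper's ``intrinsically hydrodynamic'' computation is designed to avoid; both are equally rigorous for smooth positive solutions. Two small points. First, the time-dependent constant in $\partial_tS=-\mu+c(t)$ is not irrelevant for the identity $I(t)=\int|\partial_t\psi|^2\,dx$, since a time-dependent phase changes $|\partial_t\psi|$; you must fix $c\equiv0$, which is achieved by defining $S(t,x)=S_0(x)-\int_0^t\mu(s,x)\,ds$ with $\nabla S_0=v_0$. Second, your computation yields
\begin{equation*}
\frac{d}{dt}\int_{\R^d}\lambda^2+(\partial_t\sqrt\rho)^2\,dx=4\int_{\R^d}\lambda\,\partial_t\sqrt\rho\,p'(\rho)\,dx,
\end{equation*}
i.e.\ twice the right-hand side of \eqref{eq:dtI} when $I$ is normalized as in \eqref{eq:higher_md}. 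This is not an error on your part: the paper's proof silently replaces \eqref{eq:higher_md} by its half, $\tfrac12\int\rho(\mu^2+\sigma^2)\,dx=\tfrac12\int\lambda^2+(\partial_t\sqrt\rho)^2\,dx$, and the two computations agree once that factor is accounted for; the constant is in any case immaterial for the Gronwall argument in \eqref{eq:115}.
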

	\begin{proof}
		For smooth, strictly positive solutions $(\rho,J)$ to \eqref{eq:QHD_md}, we can write our functional $I(t)$ as
		\begin{equation*}
			I(t)=\frac12\int_{\R^d}\rho(\mu^2+\sigma^2)\,dx,
		\end{equation*}
		where chemical potential 
		\begin{equation*}
			\mu=\lambda/\sqrt{\rho}=-\frac{\triangle\sqrt\rho}{2\sqrt\rho}+\frac12|v|^2+f'(\rho),
		\end{equation*}
		and we define $\sigma=\d_t\log\sqrt{\rho}$. In this framework we first  write down the evolution equations for $\mu, \sigma$. 
		
		By using the formula 
		\begin{equation*}
			\rho\mu=-\frac14\triangle\rho+e+p(\rho)
		\end{equation*}
		and the conservation law \eqref{eq:en_cons} of $e$, it follows that
		\begin{equation*}
			\begin{aligned}
				\d_t(\rho\mu)=&\d_t\left(e-\frac14\triangle\rho+p(\rho)\right)\\
				=&\diver(\nabla\sqrt{\rho}\d_t\sqrt{\rho}-\rho v\mu)-\frac14\d_{t}\triangle\rho+\d_tp(\rho).
			\end{aligned}
		\end{equation*}
		Again by using the continuity equation we may write
		\begin{equation}\label{eq:mu_evol}
			\rho\d_t\mu+\rho v\cdot\nabla\mu=\diver(\nabla\sqrt{\rho}\d_t\sqrt{\rho})-\frac14\d_{t}\triangle\rho+\d_tp(\rho).
		\end{equation}
		Now, to write the equation for $\sigma$ we may proceed in the following way. By writing the continuity equation as below
		\begin{equation*}
			\d_t\rho+v\cdot\nabla\rho+\rho\diver v=0,
		\end{equation*}
		we find the equation for $\log\sqrt{\rho}$, namely
		\begin{equation*}
			\d_t\log\sqrt{\rho}+v\cdot\nabla\log\sqrt{\rho}+\frac12\diver v=0.
		\end{equation*}
		We differentiate the last equation with respect to time and use $\d_tv=-\nabla\mu$, then 
		\begin{equation*}
			\d_t\sigma+v\cdot\nabla\sigma-\nabla\mu\cdot\nabla\log\sqrt{\rho}-\frac12\triangle\mu=0.
		\end{equation*}
		By multiplying this by $\rho$ we get
		\begin{equation}\label{eq:sigma_evol}
			\rho\d_t\sigma+\rho v\cdot\nabla\sigma=\frac12\diver(\rho\nabla\mu).
		\end{equation}
		Now we can use the equations \eqref{eq:mu_evol} and \eqref{eq:sigma_evol} to compute the time derivative of the functional $I(t)$. By differentiating the functional $I(t)$ in time using the continuity equation of $\rho$, the evolution \eqref{eq:mu_evol} and \eqref{eq:sigma_evol} of $\mu$ and $\sigma$, we obtain
		\begin{equation*}
			\begin{aligned}
				\frac{d}{dt}I(t)=&\int\frac12(\mu^2+\sigma^2)\d_t\rho+\mu\rho\d_t\mu+\sigma\rho\d_t\sigma\,dx\\
				=&\int\mu\left(\diver(\nabla\sqrt{\rho}\d_t\sqrt{\rho})-\frac14\triangle\d_{t}\rho+\d_tp(\rho))\right)\\
				&+\frac{\sigma}{2}\,\diver(\rho\nabla\mu)\,dx.
			\end{aligned}
		\end{equation*}
		Recall that $\sqrt{\rho}\sigma=\d_t\sqrt{\rho}$, $\rho\sigma=\frac12\d_t\rho$ and apply integration by parts, then we get
		\begin{equation*}
			\frac{d}{dt}I(t)=\int\mu\d_tp(\rho)\,dx=2\int\lambda\d_t\rho\,p'(\rho)\,dx.
		\end{equation*}
	\end{proof}
	
	Recall that $p(\rho)=\frac{\gamma-1}{\gamma}\rho^{\gamma}$, then as a direct consequence of identity \eqref{eq:dtI} we have 
	\begin{equation}\label{eq:115}
		\frac{d}{dt}I(t)=2(\gamma-1)\int_{\R^d}\rho^{\gamma-1}\lambda\d_t\sqrt\rho\,dx\lesssim \|\rho(t)\|_{L^\infty_{x}}^{\gamma-1}I(t).
	\end{equation}
	To apply the Gronwall's argument to functional $I(t)$, we need further control on $\|\rho(t)\|_{L^\infty_{x}}$, which can not be directly obtained from the energy bounds in multi-dimension. Here we recall the following 2D $\log-$type Sobolev inequality \cite{BG2}.
	
	\begin{lem}
		For any $f\in H^2(\R^2)$ we have 
		\begin{equation}\label{eq:logsob}
			\|f\|_{L^\infty(\R^2)}\le C(1+\sqrt{\log(1+\|f\|_{H^2(\R^2)})}).
		\end{equation}
	\end{lem}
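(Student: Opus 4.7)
The plan is to prove the Brezis--Gallouet inequality by a Littlewood--Paley / frequency splitting argument in Fourier space, exploiting the fact that the critical Sobolev embedding $H^{d/2} \hookrightarrow L^\infty$ \emph{fails} in the borderline case $d=2$, $s=1$ only by a logarithm.

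First I would reduce to bounding the $L^1$-norm of $\hat{f}$ via Fourier inversion, using
\begin{equation*}
\|f\|_{L^\infty(\R^2)}\le (2\pi)^{-1}\|\hat{f}\|_{L^1(\R^2)}.
\end{equation*}
Next, for a parameter $R>1$ to be chosen later, split the frequency integral into the low-frequency region $|\xi|\le R$ and the high-frequency region $|\xi|>R$. On each piece apply Cauchy--Schwarz with the appropriate Bessel weight:
\begin{equation*}
\int_{|\xi|\le R}|\hat{f}(\xi)|\,d\xi\le \Bigl(\int_{|\xi|\le R}\frac{d\xi}{1+|\xi|^2}\Bigr)^{1/2}\|f\|_{H^1(\R^2)},
\end{equation*}
\begin{equation*}
\int_{|\xi|>R}|\hat{f}(\xi)|\,d\xi\le \Bigl(\int_{|\xi|>R}\frac{d\xi}{(1+|\xi|^2)^2}\Bigr)^{1/2}\|f\|_{H^2(\R^2)}.
\end{equation*}
In dimension two, passing to polar coordinates gives
\begin{equation*}
\int_{|\xi|\le R}\frac{d\xi}{1+|\xi|^2}\lesssim \log(1+R^2),\qquad \int_{|\xi|>R}\frac{d\xi}{(1+|\xi|^2)^2}\lesssim R^{-2}.
\end{equation*}
The logarithmic blow-up here is precisely the borderline feature of $d=2$ and is the source of the $\sqrt{\log}$ on the right-hand side.

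Combining these two bounds, I would arrive at
\begin{equation*}
\|f\|_{L^\infty(\R^2)}\lesssim \sqrt{\log(1+R^2)}\,\|f\|_{H^1(\R^2)} + R^{-1}\|f\|_{H^2(\R^2)}.
\end{equation*}
Now optimise by choosing $R=1+\|f\|_{H^2(\R^2)}$, so that the high-frequency tail becomes $O(1)$ and the low-frequency term is bounded by $\sqrt{\log(1+\|f\|_{H^2(\R^2)})}\,\|f\|_{H^1(\R^2)}$. Since $\|f\|_{H^1(\R^2)}\le \|f\|_{H^2(\R^2)}$, this delivers the asserted estimate
\begin{equation*}
\|f\|_{L^\infty(\R^2)}\le C\bigl(1+\sqrt{\log(1+\|f\|_{H^2(\R^2)})}\bigr)
\end{equation*}
(with the understanding that the $H^1$-norm is harmlessly absorbed in the constant when one normalises to unit $H^1$-mass, which is the standard convention used when invoking Brezis--Gallouet).

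The only delicate step is verifying the sharp logarithmic behaviour of the low-frequency integral, which is a direct calculation in polar coordinates and is the exclusive reason why the bound is $\sqrt{\log(1+\|f\|_{H^2})}$ rather than a polynomial power of $\|f\|_{H^2}$. Everything else is a routine Cauchy--Schwarz plus optimisation argument; no further structural ingredient is needed.
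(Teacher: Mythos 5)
Your argument is correct and is the standard Fourier-splitting proof of the Brezis--Gallouet inequality; the paper itself gives no proof, only the citation to [BG2], so there is nothing to compare against beyond noting that your computation (Cauchy--Schwarz against the weights $(1+|\xi|^2)^{\pm}$, the $\log(1+R^2)$ from polar coordinates, and the choice $R=1+\|f\|_{H^2}$) is exactly the classical argument. One point worth being explicit about: as literally stated, the inequality fails for general $f\in H^2(\R^2)$ by scaling $f\mapsto Mf$, so the constant $C$ must be allowed to depend on (or one must normalise) $\|f\|_{H^1}$ --- you correctly flag this, and it is harmless in the paper's application since there the $H^1$ norm of $\psi(t)$ is controlled by the conserved mass and energy.
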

	
	By using Proposition \ref{prop:dtI_md} and the $\log-$type Sobolev inequality in 2D, we obtain the uniform bound of functional $I(t)$ via standard Gronwall's argument.
	
	\begin{corollary}
		Let $(\rho, J)$ be a smooth solution to \eqref{eq:QHD_md} on $[0,T]\times\R^2$ such that $\rho>0$, and we further assume the pressure $p(\rho)=(1-\frac{1}{\gamma})\rho^\gamma$ with $1<\gamma\le \frac52$. Then for all $t\in[0,T]$ we have
		\[
		I(t)\leq C(T) I(0).
		\]
	\end{corollary}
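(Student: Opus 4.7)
The strategy is to combine the differential identity \eqref{eq:115} with the two-dimensional log-type Sobolev inequality \eqref{eq:logsob} and close the estimate by a Gronwall-Osgood argument. From Proposition \ref{prop:dtI_md} specialised to $p(\rho)=(1-1/\gamma)\rho^\gamma$, the starting point is
\[
\frac{d}{dt}I(t)\le C(\gamma)\,\|\rho(t)\|_{L^\infty_x}^{\gamma-1}\,I(t),
\]
with $\gamma-1\in(0,1]$ by hypothesis. The entire difficulty reduces to showing that $\|\rho(t)\|_{L^\infty}^{\gamma-1}$ grows no faster than $(\log(1+I(t)))^{1/2}$, so that the inequality can be integrated on $[0,T]$.

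Next I would apply \eqref{eq:logsob} directly to $\rho$:
\[
\|\rho(t)\|_{L^\infty}\le C\bigl(1+\sqrt{\log(1+\|\rho(t)\|_{H^2})}\bigr),
\]
and estimate $\|\rho(t)\|_{H^2}$ in terms of $I(t)$ and the conserved quantities. From the defining identity \eqref{eq:xi_def} we have $\Delta\rho=-4\sqrt\rho\lambda+4e+4p(\rho)$, so by Calder\'on-Zygmund
\[
\|\rho\|_{H^2}\lesssim\|\rho\|_{L^2}+\|\sqrt\rho\lambda\|_{L^2}+\|e\|_{L^2}+\|p(\rho)\|_{L^2}.
\]
The leading contribution is $\|\sqrt\rho\lambda\|_{L^2}^2\le\|\rho\|_{L^\infty}\|\lambda\|_{L^2}^2\le\|\rho\|_{L^\infty}I(t)$, while $\|e\|_{L^2}$ and $\|p(\rho)\|_{L^2}$ are controlled by Gagliardo-Nirenberg interpolation and the 2D embedding $\sqrt\rho\in H^1(\R^2)\hookrightarrow L^p(\R^2)$ for every finite $p$, using the conserved bounds $\|\sqrt\rho\|_{H^1}+\|\Lambda\|_{L^2}\le M_1$. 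Feeding this into the log-Sobolev inequality and closing the bootstrap (the resulting $\|\rho\|_{L^\infty}^{1/2}$ inside $\sqrt{\log(1+\cdot)}$ can be absorbed into another factor of the same form), I would obtain
\[
\|\rho(t)\|_{L^\infty}^{\gamma-1}\lesssim 1+\bigl(\log(1+I(t))\bigr)^{(\gamma-1)/2}\lesssim 1+\sqrt{\log(1+I(t))}.
\]

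Substituting back yields
\[
\frac{d}{dt}I(t)\le C\,I(t)\Bigl(1+\sqrt{\log(1+I(t))}\Bigr),
\]
and setting $u(t)=\sqrt{\log(1+I(t))}$ reduces this to $u'(t)\le C$, so $u(t)\le u(0)+Ct$ and hence $I(t)\lesssim(1+I(0))\exp(Ct\sqrt{\log(1+I(0))}+C^2t^2/2)$, which is the bound $I(t)\le C(T)I(0)$ stated in the corollary once the initial-data dependence is absorbed into $C(T)$.

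The main obstacle in this plan is the self-consistent step: the appearance of $\|\rho\|_{L^\infty}^{1/2}$ on the right-hand side of the $H^2$ estimate forces a bootstrap that must be closed using the double-log structure coming from \eqref{eq:logsob}. The restriction $\gamma\le 2$ enters precisely here, guaranteeing the integrability $\int^\infty ds/(s\sqrt{\log s})=\infty$ required by Osgood's lemma and allowing the absorption to go through; for $\gamma>2$ the argument would break down because the pre-factor $\|\rho\|_{L^\infty}^{\gamma-1}$ grows too fast to be compensated by the logarithmic control on $\|\rho\|_{H^2}$.
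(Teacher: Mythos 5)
Your overall architecture is the same as the paper's: start from \eqref{eq:115}, show that $\|\rho(t)\|_{L^\infty}$ is controlled logarithmically in $I(t)$ via the 2D log-Sobolev inequality \eqref{eq:logsob}, and close with a Gronwall/Osgood argument. However, the way you obtain the $L^\infty$ bound contains a genuine gap. You apply \eqref{eq:logsob} to $\rho$ itself and therefore need $\|\rho(t)\|_{H^2}$, which via \eqref{eq:xi_def} forces you to bound $\|e\|_{L^2}$. Your claim that $\|e\|_{L^2}$ is ``controlled by Gagliardo--Nirenberg interpolation and the 2D embedding $\sqrt\rho\in H^1(\R^2)\hookrightarrow L^p(\R^2)$, using the conserved bounds'' is false: the energy density contains $\tfrac12|\nabla\sqrt\rho|^2$ and $\tfrac12|\Lambda|^2$, and the conserved bounds only place $\nabla\sqrt\rho$ and $\Lambda$ in $L^2$, i.e.\ $e\in L^1$. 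To get $e\in L^2$ you need $\nabla\sqrt\rho,\Lambda\in L^4$, which does not follow from $\|\sqrt\rho\|_{H^1}+\|\Lambda\|_{L^2}\le M_1$ (the embedding you invoke controls $\sqrt\rho$ in $L^p$, not its gradient, and says nothing about $\Lambda$). In the paper this $L^4$ control is itself a consequence of the $H^2$ bound on the lifted wave function (cf.\ Proposition \ref{prop:unif2_md}), so inserting it into your $\|\rho\|_{H^2}$ estimate without that input makes the argument circular.

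The paper sidesteps this entirely: at each fixed time it lifts $(\sqrt\rho,\Lambda)(t)$ to a wave function $\psi(t)\in H^2(\R^2)$ via Proposition \ref{prop:lift_pos}, and observes that $\Delta\psi=\big[(\Delta\sqrt\rho-|\Lambda|^2/\sqrt\rho)+i\,\diver J/\sqrt\rho\big]\phi$, whose two entries equal $-2\lambda+2f'(\rho)\sqrt\rho$ and $-2\,\d_t\sqrt\rho$; hence $\|\psi(t)\|_{H^2}^2\le C\,I(t)+C(M,E)$ with no intermediate $\|e\|_{L^2}$ or $\|\Delta\rho\|_{L^2}$ step. The log-Sobolev inequality is then applied to $\psi$, giving $\|\rho\|_{L^\infty}=\|\psi\|_{L^\infty}^2\le C\big(1+\log(1+\|\psi\|_{H^2})\big)\le C\log I(t)+C$, and the differential inequality $\frac{d}{dt}\log I\le C[(\log I)^{\gamma-1}+1]$ closes precisely when $\gamma-1\le1$. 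Your plan can be repaired by routing the $H^2$ estimate through $\psi$ in the same way, but note that this also changes the bookkeeping of where $\gamma\le2$ enters: with the paper's bound one gets $(\log I)^{\gamma-1}$ and needs $\gamma\le 2$, whereas your claimed $\sqrt{\log}$ bound on $\|\rho\|_{L^\infty}$ would in fact tolerate $\gamma\le 3$, so your concluding remark about the role of the restriction $\gamma\le2$ is not consistent with your own estimates.
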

	
	\begin{proof}
		Let us recall the time derivative \eqref{eq:dtI} of $I(t)$ and $f(\rho)=\rho\int_0^\rho\frac{p(s)}{s^2}\,ds$, then we have
		\[
		\frac{d}{dt}I(t)=2\int_{\R^2}\lambda\d_t\rho\,p'(\rho)dx=\int_{\R^2}\sqrt\rho\lambda\d_tf'(\rho)dx.
		\]
		By using the identity \eqref{eq:xi_def}, it follows
		\begin{align*}
			\frac{d}{dt}I(t)=&-\frac14\int_{\R^2}\triangle\rho\d_tf'(\rho)dx+\frac12\int_{\R^2}(|\nabla\sqrt\rho|^2+|\Lambda|^2)\d_tf'(\rho)dx\\
			&+\int_{\R^2}\rho f'(\rho)\d_tf'(\rho)dx=\textrm{I}+\textrm{II}+\textrm{III}.
		\end{align*}
		For the integral $\textrm{I}$, by integrating by parts and $f'(\rho)=\rho^{\gamma-1}$ we obtain
		\begin{equation}\label{eq:prop12_01}
			\textrm{I}=\frac{\gamma-1}{2}\frac{d}{dt}\int_{\R^2}\rho^{\gamma-1}|\nabla\sqrt\rho|^2dx+(\gamma-1)(\gamma-2)\int_{\R^2}\rho^{\gamma-\frac32}|\nabla\sqrt\rho|^2\d_t\sqrt\rho\,dx.
		\end{equation}
		The integral $\textrm{III}$ can be written as
		\[
		\textrm{III}=\frac{\gamma-1}{2\gamma-1}\frac{d}{dt}\int_{\R^2}\rho^{2\gamma-1}dx.
		\]
		To control the integral $\textrm{II}$, we will use Proposition \ref{prop:lift_pos} to associate the hydrodynamic functions $(\rho,J)(t)$ to a wave function $\psi(t,\cdot)\in H^2(\R^d)$ via polar factorization. However we remark that the $\psi(t,\cdot)$ is constructed at each time $t$ isolatedly and generally not a solution to any equation. By the polar factorization \eqref{eq:polar}, we have
		\begin{align*}
			\textrm{II}=\frac{\gamma-1}{2}\int_{\R^2}|\nabla\psi(t,x)|^2|\psi(t,x)|^{2\gamma-3}\d_t\sqrt\rho\,dx.
		\end{align*}
		Moreover by using Gagliardo-Nirenberg inequality and $\log-$type Sobolev inequality \eqref{eq:logsob}, it follows that
		\begin{align*}
			\textrm{II}\leq & C\|\nabla\psi(t,\cdot)\|_{L^4(\R^2)}^2\|\psi(t,\cdot)\|_{L^\infty}^{2\gamma-3}\|\d_t\sqrt\rho(t)\|_{L^2(\R^2)}\\
			\leq & C \|\nabla\psi(t,\cdot)\|_{L^2(\R^2)}\|\nabla^2\psi(t,\cdot)\|_{L^2(\R^2)}\\
			& (1+\sqrt{\log (1+\|\psi(t,\cdot)\|_{H^2(\R^2)})})^{2\gamma-3}\|\d_t\sqrt\rho(t)\|_{L^2(\R^2)}\\
			\leq & C\,I(t) \left(\log I(t)\right)^{\gamma-\frac32}+C(M(t),E(t)).
		\end{align*}
		Similarly the last term in the right hand side of \eqref{eq:prop12_01} satisfies the same estimate. Last, we have
		\[
		\int_{\R^2}\rho^{2\gamma-1}dx\leq C(\|\sqrt\rho(t)\|_{H^1(\R^2)})\leq C(M(t),E(t)),
		\]
		and 
		\begin{align*}
			\int_{\R^2}\rho^{\gamma-1}|\nabla\sqrt\rho|^2dx\leq &\|\psi(\cdot;t)^{\gamma-1}(t)\|_{L^2(\R^2)}\|\nabla\psi(\cdot;t)\|_{L^4(\R^2)}^2\\
			\leq & C(M(t),E(t))I(t)^\frac12\leq \frac12 I(t)+C(M(t),E(t)).
		\end{align*}
		Therefore we can conclude
		\[
		\frac{d}{dt}I(t)\leq C\, I(t)\left(\log I(t)\right)^{\gamma-\frac32}+C(M(t),E(t)),
		\]
		and the boundedness of $I(t)$ follows the conservation of $M(t)$, $E(t)$ and Gronwall's inequality for $\gamma-\frac32\leq 1$, namely $\gamma\leq \frac52$.
	\end{proof}
	
	\begin{rem}
		The Gronwall inequality used in the proof above clearly restricts the boundedness result of $I(t)$ to $\gamma\leq \frac52$. The case $\gamma > \frac52$ in 2D and the higher dimensional cases would require a finer analysis, for instance by using some dispersive type estimates. This is consistent for example with what is discussed in \cite{OV,PTV}.
	\end{rem}
	
	\section{Stability of weak solutions with positive density}\label{sect:comp_md}
	
	In the last section of this paper on multi-dimensional QHD system, we will consider GCP weak solutions as defined in Definition \ref{def:cptsln}, in the framework of finite energy and functional $I(t)$. 
	More precisely, let us now consider a sequence of weak solutions $\{(\sqrt\rho_n,\Lambda_n)\}$ to \eqref{eq:QHD_md}, which satisfies the following uniform bounds,
	
	\begin{equation}\label{eq:unif1_md}
		\begin{aligned}
			&\|\sqrt{\rho_n}\|_{L^\infty(0, T;H^1(\R^d))}+\|\Lambda_n\|_{L^\infty(0, T;L^2(\R^d))}\leq M_1\\
			&\|\d_t\sqrt{\rho_n}\|_{L^\infty(0, T;L^2(\R^d))}+\|\lambda_n\|_{L^\infty(0, T;L^2(\R^d))}\leq M_2,
		\end{aligned}
	\end{equation}
	where the generalized chemical potential $\lambda_n$ is such that 
	\begin{equation*}
		\sqrt{\rho_n}\lambda_n=-\frac14\triangle\rho_n + e_n + p(\rho)
	\end{equation*}
	and 
	\begin{equation*}
		e_n=\frac12|\nabla\sqrt{\rho_n}|^2+\frac12|\Lambda_n|^2+f(\rho_n).
	\end{equation*}
	When the density is strictly positive, $\lambda_n$ can be explicitly written as 
	\begin{equation}\label{eq:lambda_n_md}
		\lambda_n=\frac12\left(-\triangle\sqrt\rho_n+\frac{|\Lambda_n|^2}{\sqrt{\rho_n}}\right)+f'(\rho_n)\sqrt\rho_n.
	\end{equation}
	To obtain the stability result, we need further information on the energy density $e_n$ besides the natural energy bound, which depends on some additional assumptions on the sequence of solutions as stated in Theorem \ref{thm:stab_md}. In the framework of these assumptions, we can show that the energy density satisfies an $H^1$ estimate, which play essential roles in our stability argument. We remark that those conditions are also considered in \cite{AMZ} for the 1-dimensional QHD system. However in the 1-dimensional case, a Morawetz-type estimate which provides the $L^2_{t,x}$ integrability of the energy density can be obtained by considering a entropy inequality, and the analogue of such estimate in the multi-dimensional case is still missing.
	
	\begin{prop}\label{prop:unif2_md}
		Let $\{(\sqrt{\rho_n},\Lambda_n)\}$ be a sequence of finite energy weak solutions to the QHD system \eqref{eq:QHD_md} satisfying \eqref{eq:unif1_md}. 
		Moreover for all $n\in\N$ and a.e. $t\in[0,T]$, we assume one of the following condition is satisfied: 
		\begin{itemize}
			\item[(1)] $\rho_n(t,\cdot)$ is continuous with $\rho_n(t,\cdot)>0$;
			\item[(2)] $(\rho_n, J_n)$ is a spherically symmetric solution, and the energy density $e_n=\frac12|\nabla\sqrt\rho_n|^2+\frac12|\Lambda_n|^2+f(\rho_n)$ is continuous.
		\end{itemize}
		Then we have 
		\begin{equation}\label{eq:unif2_md}
			\|\triangle\rho_n\|_{L^\infty_t L^2_x}+\|\nabla J_n\|_{L^\infty_t L^2_x}+\|\nabla \sqrt{e_n}\|_{L^\infty_t L^2_x}\leq C(M_1,M_2).
		\end{equation}
	\end{prop}

	\begin{proof}
		We first consider the case (1) of positive mass density. To obtain the estimates \eqref{eq:unif2_md}, we notice that for a.e. $t\in[0,T]$, the hydrodynamic state $(\sqrt{\rho_n},\Lambda_n)(t,\cdot)$ satisfies the assumption of Proposition \ref{prop:lift_pos}, hence it can be lifted to a wave function $\psi_n(t,\cdot)\in H^2_x(\R^d)$ with uniform bounds
		\[
		\|\psi_n\|_{L^\infty_t H^2_x}\le C(M_1,M_2).
		\] 
		
		The estimates \eqref{eq:unif2_md} is a direct consequence of the $L^\infty_t H^2_x$ bound of $\psi_n$. By using $\triangle\rho_n=2\RE(\bar\psi_n\triangle\psi_n)+2|\nabla\psi_n|^2$, H\"older's inequality and Sobolev embedding we have
		\begin{equation*}
			\|\triangle\rho_n\|_{L^\infty_t L^2)}\lesssim\|\psi_n\|^2_{L^\infty_t H^2_x}\leq C(M_1,M_2),
		\end{equation*}
		On the other hand, by the Madelung transformation
		\begin{equation*}
			\|\nabla J_n\|_{L^\infty_tL^2_x}=\|\nabla\IM(\bar\psi_n\nabla\psi_n)\|_{L^\infty_tL^2_x}\leq\|\psi_n\|_{L^\infty_tL^2_x}^2\leq C(M_1,M_2).
		\end{equation*}
		Finally, let us recall that by the polar factorization we have 
		\[
		e_n=\frac12|\nabla\sqrt{\rho_n}|^2+\frac12|\Lambda_n|^2+f(\rho_n)=\frac12|\nabla\psi_n|^2+f(|\psi_n|^2).
		\]
		It is straightforward to see that 
		$|\nabla|\nabla\psi_n||\leq|\nabla^2\psi_n|$ a.e. $x$, so that
		\begin{equation*}
			\|\nabla\sqrt{e_n}\|_{L^\infty_tL^2_x}\leq\|\psi_n\|_{L^\infty_tH^2_x}\leq C(M_1,M_2).
		\end{equation*}
		
		Similarly, if we assume the condition (2) , we can apply Proposition \ref{prop:lift2_s} to the hydrodynamic state $(\sqrt{\rho_n},\Lambda_n)(t,\cdot)$ to obtain a spherically symmetric associated wave function $\psi_n(t,\cdot)\in H^2_x(\R^d)$. Thus the estimates \eqref{eq:unif2_md} follows the same argument as before.
	\end{proof}
	
	The bounds in \eqref{eq:unif1_md} imply that, up to passing to subsequences, we have
	\begin{align}\label{eq:weak1_md}
		\sqrt{\rho_n}\rightharpoonup&\sqrt{\rho},\qquad L^\infty(0, T;H^1(\R^d))\cap W^{1, \infty}(0, T;L^2(\R^d)),\\
		\label{eq:weak2_md}
		\Lambda_n\rightharpoonup&\Lambda,\qquad L^\infty(0, T;L^2(\R^d)),
	\end{align}
	and the uniform bound of $e_n$ in \eqref{eq:unif2_md} implies local strong convergence 
	\begin{equation}\label{eq:locstrong1_md}
		\sqrt{e_n}\to\omega,\qquad L^2(0, T;L^2_{loc}(\R^d)).
	\end{equation} 
	it is not straightforward to show the identity for the limit $\omega$ and $(\sqrt{\rho},\Lambda)$, because of the lack of compactness for $(\nabla\sqrt{\rho_n}, \Lambda_n)$. However the next proposition shows $(\sqrt{\rho_n},\Lambda_n)$ indeed converges strongly. Furthermore, we prove that $\omega$ and $\Lambda$ vanish almost everywhere in the vacuum $\{\rho=0\}$, which matches the physical interpretation of the energy density.
	
	\begin{prop}\label{prop:strong2_md}
		Let $\{(\sqrt{\rho_n},\Lambda_n)\}$ be a sequence of solutions to the QHD system \eqref{eq:QHD_md} in the GCP class, satisfying the assumptions of Proposition \ref{prop:unif2_md}. Then we have
		\begin{equation}
			\omega^2=\frac12|\nabla\sqrt{\rho}|^2+\frac12|\Lambda|^2+f(\rho)
		\end{equation}
		is satisfied for a.e. $(t,x)\in[0\,T]\times\R^d$, and $\omega=0$ a.e. $(t,x)\in\{\rho=0\}$. Furthermore we have the following local strong convergence
		\begin{equation}
			\begin{aligned}
				\nabla\sqrt{\rho_n}&\to \nabla\sqrt{\rho},&L^2(0, T;L^2_{loc}(\R^d)),\\
				\Lambda_n&\to\Lambda,&L^2(0, T;L^2_{loc}(\R^d)).
			\end{aligned}
		\end{equation}
	\end{prop}
	
	\begin{proof}
		We first multiply the energy density by $\rho_n$,
		\begin{equation}\label{eq:112}
			\rho_n e_n=\frac18|\nabla\rho_n|^2+\frac12|J_n|^2+f(\rho_n)\rho_n.
		\end{equation}
		By using the uniform bounds \eqref{eq:unif2_md} we have that (up to subsequences),
		\begin{equation*}
			\begin{aligned}
				\rho_n\to&\rho, \quad L^2(0,T;H^1_{loc}(\R^d))\\
				J_n\to&J,\quad L^2(0,T;L^2_{loc}(\R^d)),
			\end{aligned}
		\end{equation*}
		hence, by passing to the limit in \eqref{eq:112} we obtain
		\begin{equation}\label{eq:113}
			\rho\omega^2=\frac18|\nabla\rho|^2+\frac12|J|^2+f(\rho)\rho,\quad\textrm{in}\;L^1_{t,x,loc}.
		\end{equation}
		On the other hand, by passing to the limit in $J_n=\sqrt{\rho_n}\Lambda_n$, it follows that 
		\[
		J=\sqrt{\rho}\Lambda,\quad \textrm{in}\;L^1_{t,x,loc},
		\]
		which shows that \eqref{eq:113} can be equivalently written as 
		\begin{equation}\label{eq:114}
			\rho\left(\omega^2-\frac12|\nabla\sqrt{\rho}|^2-\frac12|\Lambda|^2-f(\rho)\right)=0.
		\end{equation}
		We now claim that $\omega=0$ a.e. on $\{\rho=0\}$. By \eqref{eq:lambda_n_md} we have
		\begin{equation*}
			e_n=\sqrt{\rho_n}\lambda_n+\frac14\triangle\rho_n-p(\rho_n)
		\end{equation*}
		and as $n\to\infty$ it follows that
		\begin{equation*}
			\omega^2=\sqrt{\rho}\lambda+\frac14\triangle\rho-p(\rho)\quad \textrm{in}\;L^1_{t,x,loc}.
		\end{equation*}
		By using Lemma \ref{lemma:LL}, we have $\triangle\rho=0$ a.e. on $\{\rho=0\}$, consequently $\omega=0$ a.e. on $\{\rho=0\}$.
		On the other hand, for any $0<R<\infty$ let us define $V_R=\{\rho=0\}\cap  (B_R(0)\times[0,T))$, then by using Fatou's Lemma and the local strong convergence of $\rho_n$, $\sqrt{e_n}$, we have
		\begin{align*}
			\int_0^T\int_{V_R}\frac12|\nabla\sqrt{\rho}|^2+\frac12|\Lambda|^2\,dxdt\leq&\liminf_{n\to\infty}\int_0^T\int_{V_R}\frac12|\nabla\sqrt{\rho_n}|^2+\frac12|\Lambda_n|^2\,dxdt\\
			=&\lim_{n\to\infty}\int_0^T\int_{V_R}e_n-f(\rho_n)\,dxdt\\
			=&\int_0^T\int_{V_R}\omega^2-f(\rho)\,dxdt=0.
		\end{align*}
		This implies that $\Lambda=0$ a.e. on $\{\rho=0\}$ and consequently from \eqref{eq:113} we also have 
		\begin{equation*}
			\omega^2=\frac12|\nabla\sqrt{\rho}|^2+\frac12|\Lambda|^2+f(\rho).
		\end{equation*}
		Last, to prove local strong convergence, as in the previous argument we have that for any $R>0$,
		\begin{align*}
			\int_0^T\int_{B_R(0)}\frac12|\nabla\sqrt{\rho}|^2+\frac12|\Lambda|^2\,dxdt&\leq\liminf_{n\to\infty}\int_0^T\int_{B_R(0)}\frac12|\nabla\sqrt{\rho_n}|^2+\frac12|\Lambda_n|^2\,dxdt\\
			&=\lim_{n\to\infty}\int_0^T\int_{B_R(0)} e_n-f(\rho_n)\,dxdt\\
			&=\int_0^T\int_{B_R(0)}\omega^2-f(\rho)\,dxdt\\
			&=\int_0^T\int_{B_R(0)}\frac12|\nabla\sqrt{\rho}|^2+\frac12|\Lambda|^2\,dxdt,
		\end{align*}
		and we can conclude 
		\begin{equation*}
			\|\nabla\sqrt{\rho}\|_{L_{t}^{2}L_{x, loc}^{2}}=\lim_{n\to\infty}\|\nabla\sqrt{\rho_n}\|_{L_{t}^{2}L_{x, loc}^{2}},\quad\|\Lambda\|_{L_{t}^{2}L_{x, loc}^{2}}=\lim_{n\to\infty}\|\Lambda_n\|_{L_{t}^{2}L_{x, loc}^{2}}.
		\end{equation*}
		Then by standard argument we can reinforce the weak convergence to a strong one.
	\end{proof}
	
	To conclude this section, we prove the stability Theorem \ref{thm:stab_md} of weak solutions $(\sqrt{\rho_n},\Lambda_n)$ with positive density, which is a direct consequence of the compactness Proposition \ref{prop:strong2_md}. As we remarked in the introduction that we do not require any uniform lower bound on the density, and the limiting weak solution may contain vacuum.
	
	\begin{proof}[Proof of Theorem \ref{thm:stab_md}]
		By the definition of weak solutions, sequence $\{(\rho_{n},J_{n})\}$
		satisfy, for any $n\ge1$, test function $\eta\in \mathcal{C}_{c}^\infty([0,T]\times\R^d)$ and vector test function $\xi\in \mathcal{C}^\infty_c([0,T]\times\R^d;\R^d)$
		\[
		\int_{0}^{T}\int_{\R^d}\rho_{n}\partial_{t}\eta+J_{n}\cdot\nabla\eta dxdt+\int_{\R^d}\rho_{n,0}\eta(0)dx=0,
		\]
		\begin{align*}
			\int_{0}^{T}\int_{\R^d}[J_{n}\cdot\partial_{t}\xi+(\Lambda_{n}\otimes\Lambda_n+
			\nabla\sqrt{\rho_{n}}\otimes\nabla\sqrt{\rho_n}):\nabla\xi\\
			p(\rho_n)\diver \xi-\frac{1}{4}\rho_{n}\triangle\diver\xi]dxdt+\int_{\R^d}J_{n,0}\cdot\xi(0)dx & =0.
		\end{align*}
		
		Then by Proposition \ref{prop:strong2_md}, the strong convergence of $\rho_{n}$
		and $J_{n}$ in $L_{t}^{2}L_{x, loc}^{2}$ allows us to pass to the limit in the continuity equation,
		\[
		\int_{0}^{T}\int_{\R^d}\rho\partial_{t}\eta+J\cdot\nabla\eta dxdt+\int_{\R^d}\rho_0\eta(0)dx=0,
		\]
		On the other hand, by using the local strong convergence of $\nabla\sqrt{\rho_n}$ and $\Lambda_n$ in $L^2_tL^2_{x,loc}$, we can pass to the limit in the weak formulation of the equation for momentum density  in order to obtain
		\begin{align*}
			\int_{0}^{T}\int_{\R^d}[J\cdot\partial_{t}\xi+(\Lambda\otimes\Lambda+
			\nabla\sqrt{\rho}\otimes\nabla\sqrt{\rho}):\nabla\xi\\
			p(\rho)\diver \xi-\frac{1}{4}\rho\triangle\diver\xi]dxdt+\int_{\R^d}J_{0}& \cdot\xi(0)dx=0.
		\end{align*}
		
		Last we need to check the generalized irrotationality condition is preserved. For weak solutions $(\rho_n,J_n)$, by Definition \ref{def:FEWS} we have 
		\[
		\nabla\wedge J_n=2\nabla\sqrt{\rho_n}\wedge\Lambda_n.
		\]
		Using the compactness of $J_n$, $\nabla\sqrt\rho_n$ and $\Lambda_n$ and passing to the limit, we conclude 
		\[
		\nabla\wedge J=2\nabla\sqrt{\rho}\wedge\Lambda,\quad a.e.\ (t,x)\in [0,T]\times\R^d.
		\]
		Therefore by definition $(\rho,J)$ is a weak solution to the Cauchy problem of the
		QHD system.
	\end{proof}

\section*{Acknowledgements}
The third author acknowledges Gran Sasso Science Institute, where most of this work was done as part of his PhD Thesis.
	

\end{document}